\documentclass[12pt,psfig,reqno]{amsart}
\usepackage{mathrsfs}
\usepackage{txfonts}
\usepackage{cite}
\usepackage{amsmath}
\usepackage{bbm}
\usepackage{}
\usepackage{amssymb}
\usepackage{amsfonts}
\usepackage{amscd}
\usepackage{amssymb,amsfonts,latexsym,enumerate}
\usepackage{graphics,verbatim}
\usepackage{graphicx}
\usepackage[usenames]{color} 
\setlength{\textwidth}{15.0cm} \setlength{\textheight}{22.0cm}

\makeatletter

\renewcommand\section{\@startsection {section}{1}{\z@}%
                                   {-3.5ex \@plus -1ex \@minus -.2ex}%
                                   {2.3ex \@plus.2ex}%
                                   {\centering\normalfont\bf}}

 \numberwithin{equation}{section}
\numberwithin{equation}{section}

\hoffset=-1cm \topmargin -0.4 truecm \errorcontextlines=0
\numberwithin{equation}{section}

\pagestyle{plain}

\theoremstyle{plain}
\newtheorem{thm}{Theorem}[section]
\newtheorem{lemma}[thm]{Lemma}
\newtheorem{pro}[thm]{Proposition}

\newtheorem{ex}[thm]{Example}
\newtheorem{de}[thm]{Definition}
\newtheorem{re}[thm]{Remark}
\newtheorem*{co*}{Conjecture}
\newtheorem*{thm*}{Theorem}

\parskip 1.0ex

\begin{document}
\title{Spectrality of generalized  Sierpinski-type self-affine measures}
\author{Jing-Cheng Liu, Ying Zhang, Zhi-Yong Wang and Ming-Liang Chen$^*$}
\address{Key Laboratory of High Performance Computing and Stochastic Information Processing (Ministry of Education of China), College of Mathematics and Statistics, Hunan Normal University, Changsha, Hunan 410081, China}
\email{jcliu@hunnu.edu.cn}
\email{zhangying19952020@163.com}
\address{College of Mathematics and Computational Science, Hunan First Normal University, Changsha, Hunan 410205, P. R. China }
\email{wzyzzql@163.com}
\address{School of Mathematics, Sun Yat-Sen University, Guangzhou 510275, P. R. China}
\email{mathcml@163.com}

\date{\today}
\keywords {Sierpinski-type self-affine measure, Spectral measure, Spectrum, Hadamard triple.}
\subjclass[2010]{Primary 28A78, 28A80; Secondary 42C05, 46C05.}
\thanks{ The research is supported in part by the NNSF of China (Nos. 12071125, 12001183, 11831007 and 11971500), the Hunan Provincial NSF (Nos. 2019JJ20012 and 2020JJ5097),
the SRF of Hunan Provincial Education Department (Nos.17B158 and 19B117).\\
$^*$Corresponding author.}

\begin{abstract}
For an expanding integer matrix $M\in M_2(\mathbb{Z})$ and an integer digit set
$D=\{(0,0)^t,(\alpha_1,\alpha_2)^t,(\beta_1,\beta_2)^t\}$
with $\alpha_1\beta_2-\alpha_2\beta_1\neq0$, let $\mu_{M,D}$ be the Sierpinski-type self-affine measure  defined by $\mu_{M,D}(\cdot)=\frac{1}{3}\sum_{d\in D}\mu_{M,D}(M(\cdot)-d)$. In \cite{Chen-Liu_2019,Liu-Wang}, the authors separately investigated the  spectral property of the measure $\mu_{M,D}$ in the case of $\det(M)\notin 3\mathbb{Z}$ or $\alpha_1\beta_2-\alpha_2\beta_1\notin 3\mathbb{Z}$. In this paper, we consider the remaining case where $\det(M)\in 3\mathbb{Z}$ and $\alpha_1\beta_2-\alpha_2\beta_1\in 3\mathbb{Z}$, and give the necessary and sufficient conditions for $\mu_{M,D}$ to be a spectral measure.  This completely settles the spectrality of the Sierpinski-type self-affine measure $\mu_{M,D}$.
\end{abstract}

\maketitle

\section{\bf Introduction\label{sect.1}}

Let $\mu$ be a Borel probability measure with compact support on $\mathbb{R}^n$. We call it a \emph{spectral measure} if there exists a countable subset $\Lambda\subset\mathbb{R}^n$ such that the family of exponential functions $E(\Lambda):=\{e^{2\pi i\langle\lambda,x\rangle}:\lambda\in\Lambda\}$ forms an orthonormal basis for $L^2(\mu)$. In this case, the set $\Lambda$ is called a \emph{spectrum} of $\mu$, and we also say that $(\mu,\Lambda)$ is a \emph{spectral pair}.
In particular, if $\mu$ is the normalized Lebesgue measure supported on a Borel set $\Omega$, then $\Omega$ is called a \emph{spectral set}.
The classical example of a spectral set is the unit cube $\Omega=[-\frac{1}{2},\frac{1}{2}]^n$, for which the set $\Lambda=\mathbb{Z}^n$ serves as a spectrum.

Spectral theory for the Lebesgue measures on sets has been studied extensively since it initialed by Fuglede \cite{Fuglede_1974}, whose famous conjecture asserted that $\Omega$ is a  spectral set on $\mathbb{R}^n$ if and only if $\Omega$ is a translational tile on $\mathbb{R}^n$. By \emph{translational tile} we mean that there exists a set $\mathcal{J }$ (called a \emph{tiling set}) such that
$$\mathbb{R}^n=\bigcup_{t\in\mathcal{J }}(\Omega+t) \quad {\rm and} \quad |(\Omega+t)\cap(\Omega+t^\prime)|=0\quad {\rm for \ all} \ t\neq t^\prime \in\mathcal{J },$$
where $|\cdot|$ denotes the Lebesgue measure.
Fuglede  proved that this conjecture holds if the spectrum  or the tiling set is assumed to be a lattice. The conjecture remains open for 30 years until Tao \cite{Tao_2004} gave the first counterexample: there exists a spectral subset of $\mathbb{R}^n$ with $n\geq5$ which is not a tile. After that, the counterexample was modified to show that the conjecture is false in both directions for $n\geq3$, see \cite{Kolountzakis-Matolcsi_2006-1,Kolountzakis-Matolcsi_2006-2}.
However, the conjecture is still open in
low dimensions $n=1,2$. As of today there has been an effort to prove or disprove Fuglede's conjecture, one can refer to \cite{B-M_2014,F-F-L-S_2019,I-K-T_2003,Laba_2001} and the references therein.

After the original work of Fuglede, the study of spectral measures is also blooming in the fractal community. Recently, He, Lai and Lau \cite{H-L-L_2013} proved that a spectral measure $\mu$ must be of pure type, that is, $\mu$ is discrete with finite support, singularly continuous or absolutely continuous with respect to Lebesgue measure. For the absolutely continuous case, a general result was proved by Dutkay and Lai \cite{Dutkay-Lai_2014} that an absolutely continuous measure is a spectral
one only if it is a normalized Lebesgue restricting on a domain $\Omega\subset\mathbb{R}^n$, which indeed involves the Fuglede's conjecture. For the singular continuous case, the first spectral measure was
constructed by Jorgensen and Pedersen in 1998 \cite{Jorgensen-Pedersen_1985}. They proved that  the set
$\Lambda=\{\sum_{k=0}^{n}4^{k}d_k:d_k\in \{0,1\},\ n\in\mathbb{N}\}$ is a spectrum of the standard middle-fourth Cantor measure.
In the same paper, they also showed that the usual middle-third Cantor measure is not spectral.  Since then, many interesting spectral measures have been found, at the same time some singular phenomena different from the spectral theory of Lebesgue measures have been discovered, please see \cite{An-He_2014,An-He-Lau_2015,An-He-Tao_2015,Dai_2012,Dai-He-Lai_2013,Dai-He-Lau_2014,Deng-Lau_2015,Deng_2016,Dutkay-Jorgensen_2007,
Dutkay-Jorgensen_2007_1,Dutkay-Haussermann-Lai_2019,Hu-Lau_2008,D-J_2006_1,Laba-Wang_2002,Liu-Luo_2017,Jorgensen-Pedersen_1985,Liu-Wang} and the references therein. Furthermore, the theory of spectral measures has been connected with many popular mathematical fields, such as the Fourier frame \cite{H-L-L_2013,N-O-U_2016}, wavelet  and Riesz basis \cite{Dutkay-Jorgensen_2006} and so on.

Consider the iterated function system (IFS) on $\mathbb{R}^n$,
$$\phi_d(x)=M^{-1}(x+d), \quad x\in \mathbb{R}^n,\ d\in D,$$
where $M\in M_n(\mathbb{R})$ is  an  expanding real matrix  (i.e., all eigenvalues of $M$  have
modulus strictly greater than one) and $D\subset\mathbb{R}^n$ is a finite digit set with cardinality $\#D$. It follows that
there exists a unique nonempty compact subset $T(M,D)\subset\mathbb{R}^n$ satisfying $T(M,D)=\bigcup_{d\in D}\phi_d(T(M,D))$ (see \cite{Hutchinson_1981} for details). Also there exists a unique probability measure $\mu_{M,D}$ supported on $T(M,D)$ such that
\begin{equation}\label{1.1}
\mu_{M,D}=\frac{1}{\#D}\sum_{d\in D}\mu_{M,D}\circ\phi_d^{-1}.
\end{equation}
We call $T(M,D)$ a \emph{self-affine set} (or \emph{attractor}) and $\mu_{M,D}$ a \emph{Sierpinski-type  self-affine measure}, respectively. In particular, if $M\in M_n(\mathbb{Z})$ and $D\subset\mathbb{Z}^n$, we call the $\mu_{M,D}$ an integral Sierpinski-type self-affine measure.

For a given measure $\mu_{M,D}$, the most interesting problem in this respect is to determine the spectrality or non-spectrality of $\mu_{M,D}$. And in the case when it is a spectral measure, one needs to determine the Fourier bases in the Hilbert space $L^2(\mu_{M,D})$. All these are directly
connected with  the Fourier transform $\hat{\mu}_{M,D}$ (see \eqref{2.1}) of the measure $\mu_{M,D}$ simply due to the fact that $\mu_{M,D}$ is a spectral measure with a spectrum $\Lambda$ if and only if $\sum_{\lambda\in\Lambda}|{\hat{\mu}(\xi+\lambda)}|^2=1$ for all $\xi\in\mathbb{R}^n$ (see \cite[Lemma 3.3]{Jorgensen-Pedersen_1985}). Moreover, the appearance of compatible
pair (known also as Hadamard triple), following the terminology of \cite{Strichartz_2000}, leads the research to an approachable way.

\begin{de}\label{deA}
Let $M\in M_n(\mathbb{Z})$ be an $n\times n$ expanding integer matrix, and let $D,S\subset\mathbb{Z}^n$ be two finite digit
sets with the same cardinality. We say
that $(M, D)$ is {\it admissible} (or $(M^{-1}D, S)$ forms a {\it compatible
pair} or $(M,D,S)$ forms a {\it Hadamard triple}) if the matrix
$$
H=\frac{1}{\sqrt{\#D}}\left[  e^{2\pi i\langle M^{-1}d, s\rangle}\right]_{d\in D, s\in S}
$$
is unitary, i.e., $H^*H=I$, where $H^*$ denotes the conjugate transposed matrix of $H$.
\end{de}

Given a discrete set $A\subset\mathbb{R}^n$, we define the discrete measure on $A$ by $\delta_A=\frac{1}{\#A}\sum_{a\in A}\delta_a$, where $\delta_a$ is the Dirac measure at the point $a$. It is easy to see that the discrete measure $\delta_{M^{-1}D}$ is a spectral measure  if and only if
$(M, D)$ is admissible. The well-known result of Jorgensen and Pedersen \cite{Jorgensen-Pedersen_1985} shows that
if $(M^{-1}D, S)$ is a compatible pair, then $E(\Lambda(M,S))$
is an infinite orthogonal system in $L^2(\mu_{M,D})$, where
\begin{equation*}
\Lambda(M,S)=\left\{\sum_{j=0}^{k-1} M^{*j}s_j:k\geq1, \ s_j\in S \right\}.
\end{equation*}
Moreover, Dutkay and Jorgensen \cite[Conjecture 2.5]{Dutkay-Jorgensen_2007_1}\cite[Conjecture 1.1]{Dutkay-Jorgensen_2009} formulated the following well-known conjecture:

\noindent\emph{{\bf Conjecture.} The integral Sierpinski-type self-affine measure $\mu_{M,D}$ is a spectral measure if $(M,D)$ is admissible.}

The conjecture implies that the existence of a set $S$ such that $(M,D,S)$ is a Hadamard triple is sufficient to obtain orthonormal bases of exponential functions  in $L^2(\mu_{M,D})$.  It was first proved on $\mathbb{R}$ by {\L}aba and Wang \cite{Laba-Wang_2002} and later refined in \cite{D-J_2006_1}.  The situation becomes more complicated when $n>1$. In the  paper \cite{Dutkay-Jorgensen_2007_1}, the authors showed that the conjecture is true if $(M,D)$ satisfies a technical condition called the \emph{reducibility condition}. The conjecture is true
under some additional assumptions, introduced by Strichartz \cite{Strichartz_2000}. Some high dimensional special cases were also considered by Li \cite{Li_2012,Li_2013}.  Eventually, Dutkay, Haussermann and Lai \cite{Dutkay-Haussermann-Lai_2019} proved that this conjecture is true, Hadamard triples always generate self-affine spectral measures.

\begin{thm} \cite[Theorem 1.3]{Dutkay-Haussermann-Lai_2019} \label{th(1.2)}
The integral Sierpinski-type self-affine measure $\mu_{M,D}$ is a spectral measure if $(M,D)$ is admissible.
\end{thm}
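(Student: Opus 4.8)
The plan is to separate the statement into its two logically distinct halves: orthogonality, which is essentially automatic, and completeness, which carries all the difficulty. By the Jorgensen--Pedersen result quoted above, the compatible pair $(M^{-1}D,S)$ already makes $E(\Lambda(M,S))$ an orthonormal family in $L^2(\mu_{M,D})$, so the whole problem reduces to promoting one such orthogonal family to an orthonormal \emph{basis}. Writing $m_D(\xi)=\frac{1}{\#D}\sum_{d\in D}e^{2\pi i\langle d,\xi\rangle}$ for the mask and $\hat\mu_{M,D}(\xi)=\prod_{k\ge1}m_D(M^{*-k}\xi)$ for the Fourier transform, I would use the criterion (Lemma 3.3 of \cite{Jorgensen-Pedersen_1985}) that a countable orthogonal set $\Lambda$ is a spectrum if and only if
\[
Q_\Lambda(\xi):=\sum_{\lambda\in\Lambda}|\hat\mu_{M,D}(\xi+\lambda)|^2\equiv1 .
\]
Since $Q_\Lambda\le1$ always holds, the entire task becomes the construction of a $\Lambda$ with $Q_\Lambda\equiv1$.

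The first obstacle is that the canonical candidate $\Lambda(M,S)$ need not be complete: mass can leak because the tails $\sum_{j\ge k}M^{*j}s_j$ may push the Fourier transform away from where it is needed. To control this I would build $\Lambda$ by a greedy/maximal procedure, adjoining one generation of digits from $S$ at a time and tracking the defect $1-Q_\Lambda$. The functional equation $\hat\mu_{M,D}(\xi)=m_D(M^{*-1}\xi)\,\hat\mu_{M,D}(M^{*-1}\xi)$ together with the Hadamard identity $\sum_{s\in S}|m_D(M^{*-1}(\xi+s))|^2=1$ expresses $Q$ at one scale in terms of $Q$ at the next, so completeness becomes the statement that no mass escapes under the induced dynamics of $M^*$ on a fundamental domain. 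The device I would use to prevent escape is a uniform lower bound: an \emph{equi-positive} family of the tail/conditional measures, namely an $\epsilon>0$ and a neighborhood of the origin on which every Fourier transform occurring in the family has modulus at least $\epsilon$. Equi-positivity forces the mass captured at each generation to stay bounded below, so that the defect contracts and the maximal orthogonal set is in fact a spectrum.

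The main obstacle, and the genuine heart of the matter, is establishing equi-positivity for an \emph{arbitrary} Hadamard triple. This is delicate because the zero set of the trigonometric polynomial $m_D$ can interact badly with the $M^*$-orbit, and because $M$ may be non-diagonalizable or have eigenvalues of differing moduli, so that the infinite product defining $\hat\mu_{M,D}$ decays anisotropically. I would first normalize by conjugating $M$ with an integer matrix and reducing $D$ and $S$ modulo $M\mathbb{Z}^n$ (operations that preserve both the Hadamard-triple structure and spectrality), arriving at a block-triangular model in which the degenerate directions are isolated. Then I would attempt an induction on the dimension $n$: split off an $M^*$-invariant rational subspace, apply lower-dimensional equi-positivity to the quotient, and glue the pieces using a measurable selection of spectra along the fibers, i.e.\ a \emph{measurable field} of Hadamard triples. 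The step I expect to be hardest is showing that this gluing preserves the uniform lower bound: one must rule out the pathology in which the low-dimensional factors are individually well behaved yet their combination lets mass drift off along the degenerate directions, and controlling this \emph{uniformly over all fibers} is exactly where the integer structure of the triple and a compactness argument on the space of tails have to be pushed hardest.
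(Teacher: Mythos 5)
Before anything else, note what you are being compared against: the paper does \emph{not} prove this statement. Theorem \ref{th(1.2)} is imported verbatim as \cite[Theorem 1.3]{Dutkay-Haussermann-Lai_2019} and is used purely as a black box (together with Lemma \ref{th(2.1)}) to obtain the sufficiency halves of Theorems \ref{th(3.3)}, \ref{th(3.4)} and \ref{th(4.7)}. So the only meaningful benchmark for your attempt is the proof in the cited reference of Dutkay, Haussermann and Lai.

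Measured against that proof, your outline reproduces its architecture faithfully: the Jorgensen--Pedersen criterion $Q_\Lambda(\xi)=\sum_{\lambda\in\Lambda}|\hat\mu_{M,D}(\xi+\lambda)|^2\equiv 1$, maximal orthogonal sets, the one-scale functional equation combined with the Hadamard identity $\sum_{s\in S}|m_D(M^{*-1}(\xi+s))|^2=1$, equi-positive families of tail measures as the mechanism that prevents mass from escaping, and the reduction of a general Hadamard triple by integer conjugation and induction on dimension when a nontrivial $M^*$-invariant rational subspace exists (what Dutkay--Haussermann--Lai call the quasi-product form). But this is a roadmap, not a proof, and the gap sits exactly at the point you yourself flag as hardest: establishing equi-positivity for an \emph{arbitrary} Hadamard triple, and in particular showing that the fiber-wise gluing along the invariant subspace retains a uniform lower bound so that no mass drifts off in the degenerate directions. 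That step is the entire mathematical content of the theorem and occupies the bulk of the cited paper; your text describes what must be ruled out but supplies no argument for ruling it out (your ``measurable field of Hadamard triples'' is also heavier machinery than what is actually used there, where the fibered structure is handled by a concrete quasi-product reduction and a compactness/equicontinuity argument). So as a standalone proof the proposal has a genuine gap at its core; as an account of how the known proof is organized, it is accurate, and the authors' choice to cite the result rather than reprove it is the appropriate alternative.
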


In particular, for the integral self-affine measure $\mu_{M,D}$ generated by an expanding matrix $M\in M_2(\mathbb{Z})$ and the digit set $D=\{(0,0)^t,(1,0)^t,(0,1)^t\}$, there are many papers studied the spectrality and non-spectrality of it \cite{Dutkay-Jorgensen_2007,Li_2008,Li_2010,Liu-Dong-Li_2017}. Finally, the spectrality of $\mu_{M,D}$ has been completely characterized by An, He and Tao \cite{An-He-Tao_2015}, who showed that $\mu_{M,D}$ is a spectral measure if and only if $(M, D)$ is admissible. For the general integer digit set
 \begin{equation}\label{1.2}
D=\left\{\begin{pmatrix}
0\\0\end{pmatrix},\begin{pmatrix}
\alpha_{1}\\ \alpha_{2}
\end{pmatrix},
\begin{pmatrix}
\beta_{1}\\ \beta_{2}
\end{pmatrix}\right\}
\end{equation}
with $\alpha_1\beta_2-\alpha_2\beta_1\neq0$, to the best of our knowledge,
Chen and Liu  \cite{Chen-Liu_2019}  first proved that if $\det(M)\notin 3\mathbb{Z}$, then the  mutually orthogonal exponential functions in $L^2(\mu_{M,D})$ is finite. Recently,  Liu and Wang \cite{Liu-Wang} further considered the case $\alpha_1\beta_2-\alpha_2\beta_1\notin 3\mathbb{Z}$, and gave the necessary and sufficient conditions for $\mu_{M,D}$ to be a spectral measure. In order to  characterize the spectral property of $\mu_{M,D}$, they introduced the general linear group. For a prime $p$, let $\mathbb{F}_p=\mathbb{Z}/p\mathbb{Z}$ be the residue class fields.
All nonsingular $n\times n$ matrices over $\mathbb{F}_p$ form a finite group under matrix multiplication, which is
called the {\it general linear group} $GL_n(p)$. The following is their main result.

\begin{thm}\cite[Theorem 1.10]{Liu-Wang}\label{th(Liu)}
Let $\mu_{M,D}$ be defined by \eqref{1.1},  where $M\in M_2(\mathbb{Z})$  is an expanding matrix and the digit set $D$ is given by \eqref{1.2} with $\alpha_1\beta_2-\alpha_2\beta_1\notin 3\mathbb{Z}$. Then the following statements  are equivalent.
\begin{enumerate}[{\rm (i)}]
  \item $\mu_{M,D}$ is a spectral measure.
 \item $(M, D)$ is admissible.
 \item $(AMB)^*(1,-1)^t\in 3\mathbb{Z}^2$,
where  $B=\begin{bmatrix}
\alpha_1&\beta_1\\
\alpha_2&\beta_2
\end{bmatrix}$ and  $A\in GL_2(3)$ satisfies  $AB=I\;({\rm{ mod} } \ M_2(3\mathbb{Z})).$
\end{enumerate}
\end{thm}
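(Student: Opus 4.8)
The plan is to establish the cycle $(\mathrm{ii})\Rightarrow(\mathrm{i})\Rightarrow(\mathrm{iii})\Rightarrow(\mathrm{ii})$, where $(\mathrm{iii})\Leftrightarrow(\mathrm{ii})$ will be a purely algebraic reduction to the classical digit set $D_0=\{(0,0)^t,(1,0)^t,(0,1)^t\}$ and $(\mathrm{i})\Rightarrow(\mathrm{iii})$ will be the genuine analytic core. First I would set up the mask machinery: writing $m_D(\xi)=\frac13\sum_{d\in D}e^{2\pi i\langle d,\xi\rangle}$, one has $\hat\mu_{M,D}(\xi)=\prod_{j\ge1}m_D((M^*)^{-j}\xi)$. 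Since the digit set factors as $D=BD_0$ with $B$ the matrix in the statement, we get $m_D(\xi)=m_{D_0}(B^*\xi)$, which reduces everything to the zero set $\mathcal Z(m_{D_0})=\{\tfrac13(1,2)^t,\tfrac13(2,1)^t\}+\mathbb Z^2$, i.e. the two nonzero classes $v\in\mathbb F_3^2$ with $v_1+v_2\equiv0$. The hypothesis $\det B=\alpha_1\beta_2-\alpha_2\beta_1\notin3\mathbb Z$ makes $B$ invertible over $\mathbb F_3$, so $A$ is literally $B^{-1}$ modulo $3$ and $B^*A^*\equiv I\pmod3$; this is the fact that powers the reduction.

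For $(\mathrm{ii})\Leftrightarrow(\mathrm{iii})$ I would first observe that $(M,D)$ is admissible if and only if there exists $s\in\mathbb Z^2$ with $B^*(M^*)^{-1}s\equiv\tfrac13(1,2)^t\pmod{\mathbb Z^2}$: taking $S=\{0,s,-s\}$ and using $\tfrac13(2,1)^t\equiv-\tfrac13(1,2)^t$ forces all pairwise differences into $\mathcal Z(m_D)$, and conversely any Hadamard set (normalized so $0\in S$) produces such an $s$. Stripping the invertible factor $B^*$ rewrites $(\mathrm{iii})$ as $M^*A^*(1,-1)^t\equiv0\pmod3$; under this congruence the vector $s:=\tfrac13M^*A^*(1,2)^t$ is integral (because $A^*(1,2)^t\equiv A^*(1,-1)^t\pmod3$) and satisfies $B^*(M^*)^{-1}s=\tfrac13B^*A^*(1,2)^t\equiv\tfrac13(1,2)^t$, giving $(\mathrm{ii})$; running the computation backwards gives the converse. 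The implication $(\mathrm{ii})\Rightarrow(\mathrm{i})$ is then immediate from Theorem \ref{th(1.2)}.

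The hard part is $(\mathrm{i})\Rightarrow(\mathrm{iii})$, which I would prove in contrapositive form: if $M^*A^*(1,-1)^t\not\equiv0\pmod3$ then $\mu_{M,D}$ is not spectral. Any orthonormal family $E(\Lambda)$ with $0\in\Lambda$ satisfies $\Lambda-\Lambda\subset\{0\}\cup\mathcal Z(\hat\mu_{M,D})$, and $\mathcal Z(\hat\mu_{M,D})=\bigcup_{j\ge1}(M^*)^j(B^*)^{-1}\mathcal Z(m_{D_0})$. When $\det M\notin3\mathbb Z$ the congruence $(\mathrm{iii})$ already fails and the Chen--Liu finiteness theorem \cite{Chen-Liu_2019} rules out an infinite orthogonal set, so the real work is the case $\det M\in3\mathbb Z$, where $M$ is singular over $\mathbb F_3$. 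Here I would track the reductions modulo $3$ of the admissible digit choices at each scale $j$ along the tree $\Lambda(M,S)$, showing that a complete (hence infinite) orthonormal system can propagate only if $A^*(1,-1)^t$ lies in $\ker(M^*\bmod3)$, which is exactly $(\mathrm{iii})$; otherwise the system starves, and one quantifies the failure through the Jorgensen--Pedersen identity $\sum_{\lambda\in\Lambda}|\hat\mu_{M,D}(\xi+\lambda)|^2\equiv1$ by exhibiting $\xi$ at which the sum drops below $1$.

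The main obstacle is precisely this singular regime. Because $B$ is not integrally invertible, $B^{-1}MB$ need not be an integer matrix, so one cannot simply conjugate $\mu_{M,D}$ to the known $D_0$ theorem of An--He--Tao \cite{An-He-Tao_2015}; the mod-$3$ reduction that drives $(\mathrm{ii})\Leftrightarrow(\mathrm{iii})$ is available only at the level of the Hadamard condition, not at the level of the measures themselves. Controlling which mod-$3$ information survives the non-integral change of variables, and converting it into genuine non-completeness in the infinite-dimensional space $L^2(\mu_{M,D})$, is the crux of the argument.
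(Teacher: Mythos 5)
A preliminary remark: this statement is not proved in the paper at all; it is imported verbatim from \cite[Theorem 1.10]{Liu-Wang}, so there is no internal proof to compare yours against. Judged on its own terms, the algebraic half of your proposal is correct: the factorization $m_D(\xi)=m_{D_0}(B^*\xi)$, the normalization $S=\{0,s,-s\}$, the equivalence of admissibility with the existence of $s\in\mathbb{Z}^2$ satisfying $B^*(M^*)^{-1}s\equiv\tfrac13(1,2)^t\pmod{\mathbb{Z}^2}$, and the passage between (ii) and (iii) via $B^*A^*\equiv I\pmod 3$ all check out; (ii)$\Rightarrow$(i) is indeed Theorem \ref{th(1.2)}, and when $\det M\notin 3\mathbb{Z}$ condition (iii) fails automatically while \cite{Chen-Liu_2019} rules out infinite orthogonal families and hence spectrality, so that case is fine.

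The gap is the case you yourself identify as the crux: $\det M\in3\mathbb{Z}$ with (iii) failing. Your plan is to ``track the reductions modulo $3$ of the admissible digit choices at each scale $j$ along the tree $\Lambda(M,S)$'' and to exhibit $\xi$ where the Jorgensen--Pedersen sum drops below $1$. But an arbitrary spectrum $\Lambda$ of $\mu_{M,D}$ carries no tree structure: the only information orthogonality provides is $(\Lambda-\Lambda)\setminus\{0\}\subset\bigcup_{j\geq1}M^{*j}\mathcal{Z}(m_D)$, and elements of $\Lambda$ need not decompose as finite sums $\sum_j M^{*j}s_j$ with digits from any Hadamard set, so there are no ``digit choices'' to track; likewise no mechanism is given for producing the bad $\xi$, which is precisely the assertion to be proved. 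What actually closes this case (in \cite{Liu-Wang}, and in the analogous argument this paper itself runs in Proposition \ref{th(4.1)}) is a convolution-factor argument: mod-$3$ arithmetic on the scales shows that every bi-zero set $\Lambda$ of $\mu_{M,D}$ must have all of $(\Lambda-\Lambda)\setminus\{0\}$ contained in $\bigcup_{j\neq j_0}M^{*j}\mathcal{Z}(m_D)$ for some fixed scale $j_0\in\{1,2\}$, hence $\Lambda$ is a bi-zero set of the proper convolution factor obtained by deleting $\delta_{M^{-j_0}D}$ from $\mu_{M,D}=\delta_{M^{-1}D}\ast\delta_{M^{-2}D}\ast\cdots$, and then Lemma \ref{lem(DHL)} (Dai--He--Lau) forbids a bi-zero set of a factor from being a spectrum of the whole convolution. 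Some argument of this strength is required where your sketch says ``the system starves''; as written, that step is an assertion, not a proof.
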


As stated above, for an expanding  matrix $M\in M_2(\mathbb{Z})$ and the digit set $D$ given by \eqref{1.2}, the spectral properties of the corresponding measure $\mu_{M,D}$ in the case of $\det(M)\notin 3\mathbb{Z}$ or $\alpha_1\beta_2-\alpha_2\beta_1\notin 3\mathbb{Z}$ have been completely characterized.
A natural subsequent question is that

{\bf(Qu 1):} For an expanding matrix $M\in M_2(\mathbb{Z})$ with $\det(M)\in 3\mathbb{Z}$ and the digit set $D$ given by \eqref{1.2} with $\alpha_1\beta_2-\alpha_2\beta_1\in 3\mathbb{Z}$, what is the sufficient and necessary condition for $\mu_{M,D}$ to be a spectral measure? Is it $(M,D)$ admissible?

In this paper, our main purpose is to initiate a study on the  question {\bf(Qu 1)}. Without loss of generality, we can assume that $\gcd(\alpha_1,\alpha_2,\beta_1,\beta_2)=1$ by Lemma \ref{th(2.1)}.
The main result in the paper is the following:

\begin{thm} \label{th(1.4)}
Let $\mu_{M,D}$ be defined by \eqref{1.1},  where $M\in M_2(\mathbb{Z})$  is an expanding matrix with $\det(M)\in 3\mathbb{Z}$ and the digit set $D$ is given by \eqref{1.2} with $\alpha_1\beta_2-\alpha_2\beta_1\in 3\mathbb{Z}$. Then $\mu_{M,D}$ is a spectral measure if and only if there exists a matrix  $Q\in M_2(\mathbb{Z})$  such that  $(M',D')$ is admissible, where $M'=QMQ^{-1}$ and $D'=QD$. In particular, if $2\alpha_1-\beta_1, 2\alpha_2-\beta_2\in3\mathbb{Z}$, then $\mu_{M,D}$ is a spectral measure if and only if $L^2(\mu_{M,D})$ contains an infinite orthogonal set of exponential functions.
\end{thm}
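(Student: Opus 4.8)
The plan is to prove the two assertions in turn, deducing the first from Theorem \ref{th(1.2)} by a conjugation‑invariance argument and the second by showing that in the symmetric case an infinite orthogonal set already forces admissibility up to conjugation. The basic tool, which I would establish first, is that spectrality is invariant under invertible linear maps. For invertible $Q$ the push‑forward $\mu_{M,D}\circ Q^{-1}$ coincides with $\mu_{M',D'}$, where $M'=QMQ^{-1}$ and $D'=QD$; this follows from \eqref{1.1} once one checks $\phi'_{Qd}=Q\circ\phi_d\circ Q^{-1}$ for the maps $\phi'_{d'}(y)=M'^{-1}(y+d')$. Since $\widehat{\mu_{M,D}\circ Q^{-1}}(\xi)=\widehat{\mu}_{M,D}(Q^{*}\xi)$ and $f\mapsto f\circ Q$ is a unitary from $L^2(\mu_{M',D'})$ onto $L^2(\mu_{M,D})$ sending $e^{2\pi i\langle\lambda',\cdot\rangle}$ to $e^{2\pi i\langle Q^{*}\lambda',\cdot\rangle}$, a set $\Lambda'$ is an orthonormal basis (resp.\ an orthogonal set) of exponentials for $L^2(\mu_{M',D'})$ if and only if $Q^{*}\Lambda'$ is one for $L^2(\mu_{M,D})$. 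Hence $\mu_{M,D}$ is spectral iff $\mu_{M',D'}$ is, and the existence of an infinite orthogonal set transfers as well. The direction ($\Leftarrow$) of Part 1 is then immediate: if $(M',D')$ is admissible for some $Q\in M_2(\mathbb{Z})$, Theorem \ref{th(1.2)} makes $\mu_{M',D'}$ spectral, and by the invariance above so is $\mu_{M,D}$.

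For the direction ($\Rightarrow$) of Part 1 I would argue by a $\bmod\,3$ classification. Assuming $\mu_{M,D}$ spectral, $L^2(\mu_{M,D})$ contains an infinite orthogonal set of exponentials. Since $\gcd(\alpha_1,\alpha_2,\beta_1,\beta_2)=1$ (Lemma \ref{th(2.1)}) and $\det B\in 3\mathbb{Z}$, the columns $d_1,d_2$ of $B$ are linearly dependent but not both zero mod $3$, so after a unimodular change of basis $Q_0$ one may assume $D\bmod 3$ is one of the canonical shapes $\{0,e_1,0\}$, $\{0,e_1,e_1\}$ or $\{0,e_1,2e_1\}$ with $e_1=(1,0)^t$. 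For each shape the zero set of the mask $m_D(\xi)=\tfrac13\sum_{d\in D}e^{2\pi i\langle d,\xi\rangle}$ is a finite union of rational lines, and any orthogonal set of $\mu_{M,D}$ has its differences inside $\bigcup_{j\ge1}(M^{*})^{j}\mathcal{Z}(m_D)$ modulo $\mathbb{Z}^2$, because $\widehat{\mu}_{M,D}(\xi)=\prod_{j\ge1}m_D((M^{*})^{-j}\xi)$. I would then show that an \emph{infinite} such set can exist only when $M^{*}\bmod 3$ leaves the relevant admissible direction invariant, and that in precisely these cases one can construct a set $S$ and a unimodular $Q$ for which the Hadamard matrix of Definition \ref{deA} for $(M',D')$ is unitary; in the complementary configurations the maximal orthogonal set is finite, so $\mu_{M,D}$ is not spectral. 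Together with the invariance this yields the stated equivalence.

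For Part 2, the condition $2\alpha_i-\beta_i\in 3\mathbb{Z}$ means $d_2\equiv-d_1\pmod 3$ with $d_1\not\equiv0$, i.e.\ $D\equiv\{0,d_1,-d_1\}\pmod 3$. The implication ($\Rightarrow$) is trivial, since $\mu_{M,D}$ is a non‑atomic measure with infinite support, so $L^2(\mu_{M,D})$ is infinite dimensional and any spectrum is an infinite orthogonal set. For ($\Leftarrow$), by Part 1 it suffices to produce $Q$ with $(M',D')$ admissible. I would choose $Q\in GL_2(\mathbb{Z})$ with $Qd_1\equiv e_1\pmod 3$, so that $D'=QD\equiv\{(0,0)^t,(1,0)^t,(2,0)^t\}\pmod 3$; the first coordinates of $D'$ then form a complete residue system mod $3$. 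Using the hypothesis that an infinite orthogonal set exists to pin down the action of $M'^{*}$ on this distinguished direction mod $3$, I would exhibit a set $S'$ making $(M',D',S')$ a Hadamard triple, whence $(M',D')$ is admissible and $\mu_{M,D}$ is spectral.

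The main obstacle is the hard direction of Part 1: upgrading ``$L^2(\mu_{M,D})$ has an infinite orthogonal set'' to ``some conjugate $(M',D')$ is admissible''. Infinitely many orthogonal exponentials is \emph{a priori} strictly weaker than spectrality, and the delicate point is to rule out the incomplete configurations in which the $M^{*}$‑orbit of $\mathcal{Z}(m_D)$ produces an infinite orthogonal family that nonetheless cannot be completed to a basis. Controlling this requires a careful $\bmod\,3$ (and, in the borderline sub‑cases, $\bmod\,9$) analysis of how $M^{*}$ permutes the cosets carrying the zeros of $m_D$, together with an explicit construction of the conjugating matrix $Q$ and the dual digit set $S'$ in each admissible configuration.
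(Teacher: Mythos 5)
Your sufficiency direction and the conjugation-invariance tool are fine and coincide with the paper's (Lemma \ref{th(2.1)} plus Theorem \ref{th(1.2)}), but the necessity direction has two concrete gaps, the first of which is fatal as stated. You restrict all conjugating matrices to be unimodular ($Q_0,Q\in GL_2(\mathbb{Z})$). Admissibility, however, is \emph{invariant} under unimodular conjugation: if $(M,D,S)$ is a Hadamard triple and $Q\in GL_2(\mathbb{Z})$, then $\langle (QMQ^{-1})^{-1}Qd,(Q^{*})^{-1}s\rangle=\langle M^{-1}d,s\rangle$ and $(Q^{*})^{-1}S\subset\mathbb{Z}^2$, so $(QMQ^{-1},QD,(Q^{*})^{-1}S)$ is again a Hadamard triple. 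Hence your version of the necessity would assert that spectrality of $\mu_{M,D}$ forces $(M,D)$ itself to be admissible. That is false: the paper's Example \ref{th(4.9)} and Remark \ref{th(4.10)} exhibit $\tilde{M}_1=\begin{bmatrix}4&0\\3&3\end{bmatrix}$ and $\tilde{D}=\left\{(0,0)^t,(1,0)^t,(-2,6)^t\right\}$ with $\mu_{\tilde{M}_1,\tilde{D}}$ spectral but $(\tilde{M}_1,\tilde{D})$ \emph{not} admissible, since $\mathcal{Z}(m_{\tilde{D}})\subset\{(\ell_1/3,\ell_2/18)^t:\ell_1,\ell_2\notin 3\mathbb{Z}\}$ and $\tilde{M}_1^{*}$ maps no such point into $\mathbb{Z}^2$. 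The conjugations the theorem genuinely needs are the non-unimodular rescalings $Q_n$ of \eqref{3.6}, which shift the $3$-adic valuation of the second coordinates of the digits; your reduction, which only remembers $D$ and $M$ modulo $3$ (or $9$), cannot produce them — the relevant invariant is the exact exponent $\eta$ with $\alpha_1\beta_2-\alpha_2\beta_1=3^{\eta}\gamma$, $3\nmid\gamma$, and the paper's whole case analysis compares $\eta$ with the valuation $s$ inside $\tilde{M}$.

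Second, your plan to finish the necessity by showing that ``in the complementary configurations the maximal orthogonal set is finite'' cannot work in Case II ($2\alpha_1-\beta_1\notin3\mathbb{Z}$ or $2\alpha_2-\beta_2\notin3\mathbb{Z}$). For $\tilde{M}\in\mathfrak{M}_4$ (and the classes $\mathfrak{M}_8,\mathfrak{M}_{10}$ reduced to it) one has $\tilde{M}^{*2}\in M_2(3\mathbb{Z})$ and infinite bi-zero sets do exist; the paper rules out spectrality there not by bounding orthogonal families but by the convolution obstruction of Lemma \ref{lem(DHL)}: every bi-zero set of $\mu_{\tilde{M},\tilde{D}}$ is shown (Proposition \ref{th(4.1)}) to be a bi-zero set of one of the proper convolution factors $\mu_1,\mu_2$ in \eqref{4.1}, hence can never be complete. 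Likewise, in the residual class $\mathfrak{R}_3$ the paper conjugates by $Q_\eta$ to reach the setting $\alpha_1\beta_2-\alpha_2\beta_1\notin3\mathbb{Z}$ and invokes Theorem \ref{th(Liu)}, where again ``infinite orthogonal set'' is strictly weaker than ``spectral''. You flag exactly this difficulty at the end of your proposal, but the tools you offer (coset/permutation analysis of $M^{*}$ mod $3$ and mod $9$) supply neither the convolution argument nor the $GL_2(3)$ criterion; indeed, the equivalence ``spectral $\Leftrightarrow$ infinite orthogonal set'' in the theorem's second assertion is restricted to Case I precisely because it fails in Case II.
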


We remark that Theorem \ref{th(1.4)} settles completely the spectrality question {\bf(Qu 1)} for the planar self-affine measure $\mu_{M,D}$. In Theorem \ref{th(1.4)}, it is worth noting that $Q$ is determined by the matrix $M$.  That is, for different $M$, the corresponding $Q$ may be different  (see Theorem \ref{th(3.4)}).  We also notice that
$Q$ is nontrivial, i.e.,  there exist $M$ and $D$ such that $(M,D)$ is not admissible, but $(QMQ^{-1},QD)$ is admissible (see Remark \ref{th(4.10)}).

We now outline the strategy of the proof of Theorem \ref{th(1.4)}. First, we give an  equivalent form of Theorem \ref{th(1.4)} under a similarity transformation (see Theorem \ref{th(2.4)}), which implies that the spectrality  of $\mu_{M,D}$  is the same as that of $\mu_{\tilde M,\tilde D}$, where $\tilde D$ and $\tilde M$  are given by \eqref{2.6} and \eqref{2.7} respectively.
For the first result of Theorem \ref{th(1.4)}, the sufficiency will follow directly from Theorem \ref{th(1.2)} and Lemma \ref{th(2.1)}, the difficult part of the proof is the necessity.
A key point in the proof is to analyze the characteristics of the zero set of the mask polynomial of $\tilde D$ (see Proposition \ref{pro(A)}). Moreover, our approach to the result is based on how to express the matrix $\tilde M$. We will divide the proof of Theorem \ref{th(1.4)} into the following two cases:

\textbf{Case I:}  $2\alpha_1-\beta_1, 2\alpha_2-\beta_2\in3\mathbb{Z}$.

\textbf{Case II:} $2\alpha_1-\beta_1\notin 3\mathbb{Z}$ or $2\alpha_2-\beta_2\notin 3\mathbb{Z}$.

\noindent In each case, we will use different methods to characterize the spectral properties of the measure $\mu_{\tilde M,\tilde D}$ corresponding to different matrices $\tilde M$ (see Theorems \ref{th(3.4)} and \ref{th(4.7)}).

For the second result of Theorem \ref{th(1.4)}, the necessity is essentially trivial from the definition of spectral measure. To complete the proof the sufficiency, we first use the properties of  $\tilde{M}$ and $\tilde D$ to show that $\mu_{\tilde M,\tilde D}$ is a non-spectral measure for $\tilde{M}\in\mathfrak{B}$ (see Lemma \ref{th(3.2)}), and then  prove the completeness of the infinite orthogonal set of  exponential functions (see Theorem \ref{th(3.3)}), where the completeness is established by checking the existence of Hadamard triple.

The paper is organized as follows. In Section 2, we recall a few basic concepts and notations, establish several lemmas that will be needed in the proof of our main results. In Section 3, we focus on proving  Theorem \ref{th(1.4)} in Case I. Finally,   we  settle Case II, and  give some remarks and  an open problem in Section 4.

\section{\bf Preliminaries\label{sect.2}}

The purpose of this section is to collect necessary facts that we need in the following
sections. Let $M\in M_n(\mathbb{R})$ be  an  expanding real matrix, $D\subset\mathbb{R}^n$ be a finite digit set with cardinality $\#D$, and let  $\mu_{M,D}$ be defined by \eqref{1.1}. The Fourier transform of $\mu_{M,D}$ is defined as usual,
\begin{equation}\label{2.1}
\hat{\mu}_{M,D}(\xi)=\int e^{2\pi i\langle x,\xi\rangle }d\mu_{M,D}(x)=\prod_{j=1}^\infty m_D({M^{*}}^{-{j}}\xi), \quad  \xi\in\mathbb{R}^n,
\end{equation}
where $M^*$ denotes the conjugate transposed matrix of $M$, and $m_D(x)$ is the mask polynomial of $D$, which is defined by
\begin{equation*}
m_D(x)=\frac 1{\#D}\sum\limits_{d\in D}{e^{2\pi i\langle d,x\rangle}},\quad x\in\mathbb{R}^n.
\end{equation*}
Let $\mathcal{Z}(f)$ denote the zero set of the function $f$, i.e., $\mathcal{Z}(f)=\{x\in\mathbb{R}^n:f(x)=0\}$, and define
\begin{equation}\label{2.2}
\mathcal{Z}_{D}^n=\mathcal{Z}(m_{D})\cap [0, 1)^n.
\end{equation}
It is easy to see that $m_D$ is a $\mathbb{Z}^n$-periodic function for $D\subset \mathbb{Z}^n$. In this case,
\begin{equation}\label{2.3}
\mathcal{ Z}(m_D)=\mathcal{Z}_{D}^n+\mathbb{Z}^n.
\end{equation}
It follows from \eqref{2.1} that
\begin{equation}\label{2.4}
\mathcal{Z}(\hat{\mu}_{M, D})=\bigcup_{j=1}^{\infty}M^{*j}(\mathcal{Z}(m_D)).
\end{equation}
 For any $\lambda_1\neq \lambda_2\in \mathbb{R}^n$, the orthogonality condition
\begin{equation*}
0=\langle e^{2\pi i \langle \lambda_1,x\rangle},e^{2\pi i \langle \lambda_2,x\rangle}\rangle_{L^2(\mu_{M,D})}=\int e^{2\pi i \langle\lambda_1-\lambda_2,x\rangle}d\mu_{M,D}(x)=\hat{\mu}_{M,D}(\lambda_1-\lambda_2)
\end{equation*}
relates to the zero set $\mathcal{Z}(\hat{\mu}_{M,D})$ directly. For a countable subset $\Lambda\subset\mathbb{R}^n$,  it is easy to see that $E(\Lambda)=\{e^{2\pi i\langle\lambda,x\rangle}:\lambda\in\Lambda\}$ is an orthonormal family of $L^2(\mu_{M,D})$ if and only if
 \begin{equation}\label{2.5}
 (\Lambda-\Lambda)\setminus\{0\}\subset\mathcal{ Z}(\hat{\mu}_{M,D}).
\end{equation}
We call $\Lambda$ satisfying \eqref{2.5} a bi-zero set of $\mu_{M,D}$. Since the properties of bi-zero sets (or spectra) are invariant under  a translation, it will be convenient to assume that $0\in\Lambda$ in this paper, and hence $\Lambda\subset(\Lambda-\Lambda)$.

The following lemma indicates that the spectral properties of
$\mu_{M,D}$ are invariant under a similarity transformation. The proof is the same as that of Lemma 4.1 in {\rm \cite{Dutkay-Jorgensen_2007}}.

\begin{lemma}\label{th(2.1)}
Let $D, D_1\subset \Bbb R^n$ be two finite digit sets with the same cardinality,  and let $M, M_1\in M_n(\Bbb R)$ be two expanding matrices. If there exists a matrix  $A\in M_n(\mathbb{R})$  such that $M_1=AMA^{-1}$ and $D_1=AD$,  then $\Lambda$ is a bi-zero set of $\mu_{M,D}$ if and only if $A^{*-1}\Lambda$ is a bi-zero set of $\mu_{M_1,D_1}$.
Moreover,  $\mu_{M,D}$ is a spectral measure with spectrum $\Lambda$ if and only if  $\mu_{M_1,D_1}$ is a spectral measure with spectrum $A^{*-1}\Lambda$.
\end{lemma}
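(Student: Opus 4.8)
The plan is to reduce both assertions to a single change-of-variables identity for the Fourier transform. First I would record how the mask polynomial behaves under $D_1=AD$: since
\[
m_{D_1}(x)=\frac{1}{\#D}\sum_{d\in D}e^{2\pi i\langle Ad,\,x\rangle}=\frac{1}{\#D}\sum_{d\in D}e^{2\pi i\langle d,\,A^*x\rangle}=m_D(A^*x),
\]
the mask polynomial of $D_1$ is that of $D$ precomposed with $A^*$. Next, from $M_1=AMA^{-1}$ we get $M_1^*=A^{*-1}M^*A^*$, and hence, by telescoping the interior factors $A^*A^{*-1}=I$, the identity $(M_1^*)^{-j}=A^{*-1}(M^*)^{-j}A^*$ for every $j\geq1$. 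Substituting these two facts into the product formula \eqref{2.1} gives
\[
\hat{\mu}_{M_1,D_1}(\eta)=\prod_{j=1}^\infty m_D\bigl(A^*(M_1^*)^{-j}\eta\bigr)=\prod_{j=1}^\infty m_D\bigl((M^*)^{-j}A^*\eta\bigr)=\hat{\mu}_{M,D}(A^*\eta),
\]
where the middle step uses $A^*(M_1^*)^{-j}=(M^*)^{-j}A^*$. This single relation $\hat{\mu}_{M_1,D_1}(\eta)=\hat{\mu}_{M,D}(A^*\eta)$ is the heart of the proof, and everything else is a formal consequence.

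From it the zero sets satisfy $\mathcal{Z}(\hat{\mu}_{M_1,D_1})=A^{*-1}\mathcal{Z}(\hat{\mu}_{M,D})$, since $\hat{\mu}_{M_1,D_1}(\eta)=0$ exactly when $A^*\eta\in\mathcal{Z}(\hat{\mu}_{M,D})$. For the bi-zero set claim I would use the linearity of $A^{*-1}$ to write $A^{*-1}\Lambda-A^{*-1}\Lambda=A^{*-1}(\Lambda-\Lambda)$, so that the invertible map $A^*$ carries $(A^{*-1}\Lambda-A^{*-1}\Lambda)\setminus\{0\}$ bijectively onto $(\Lambda-\Lambda)\setminus\{0\}$. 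Via the identity above, the inclusion $(A^{*-1}\Lambda-A^{*-1}\Lambda)\setminus\{0\}\subset\mathcal{Z}(\hat{\mu}_{M_1,D_1})$ is then equivalent to $(\Lambda-\Lambda)\setminus\{0\}\subset\mathcal{Z}(\hat{\mu}_{M,D})$; by criterion \eqref{2.5} this says precisely that $A^{*-1}\Lambda$ is a bi-zero set of $\mu_{M_1,D_1}$ if and only if $\Lambda$ is a bi-zero set of $\mu_{M,D}$.

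For the spectrum assertion I would invoke the Jorgensen--Pedersen criterion cited in the introduction \cite{Jorgensen-Pedersen_1985}, namely that $\Gamma$ is a spectrum of a probability measure $\nu$ if and only if the $Q$-function $\sum_{\gamma\in\Gamma}|\hat{\nu}(\xi+\gamma)|^2$ equals $1$ for all $\xi\in\mathbb{R}^n$ (this single condition encodes both orthonormality and completeness). Applying the identity $\hat{\mu}_{M_1,D_1}(\eta)=\hat{\mu}_{M,D}(A^*\eta)$ and the substitution $\xi=A^*\eta$, which ranges over all of $\mathbb{R}^n$ as $\eta$ does, gives
\[
\sum_{\lambda\in\Lambda}\bigl|\hat{\mu}_{M_1,D_1}(\eta+A^{*-1}\lambda)\bigr|^2=\sum_{\lambda\in\Lambda}\bigl|\hat{\mu}_{M,D}(A^*\eta+\lambda)\bigr|^2,
\]
so the $Q$-function for $(\mu_{M_1,D_1},A^{*-1}\Lambda)$ is identically $1$ exactly when the one for $(\mu_{M,D},\Lambda)$ is. This yields that $\Lambda$ is a spectrum of $\mu_{M,D}$ if and only if $A^{*-1}\Lambda$ is a spectrum of $\mu_{M_1,D_1}$, completing both parts.

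The computations are routine; the only point demanding care is the passage through the infinite product, namely checking that the conjugate-transpose powers telescope so that $A^*(M_1^*)^{-j}=(M^*)^{-j}A^*$ for every $j$, and that the resulting relabeling of factors is legitimate (it is, since the product converges absolutely for expanding $M$). I expect this bookkeeping with adjoints and inverses to be the only place where a sign or transpose slip could occur, so I would verify it explicitly before proceeding.
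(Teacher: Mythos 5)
Your proposal is correct, and it follows essentially the same route as the paper, which does not write out a proof but defers to Lemma 4.1 of \cite{Dutkay-Jorgensen_2007}: the whole content there, as in your argument, is the change-of-variables identity $\hat{\mu}_{M_1,D_1}(\eta)=\hat{\mu}_{M,D}(A^*\eta)$ coming from $m_{D_1}=m_D\circ A^*$ and $(M_1^*)^{-j}=A^{*-1}(M^*)^{-j}A^*$. Your only (harmless) variation is handling completeness via the Jorgensen--Pedersen criterion $\sum_{\lambda\in\Lambda}|\hat{\mu}(\xi+\lambda)|^2\equiv 1$, which the paper itself quotes, rather than via the unitary $f\mapsto f\circ A^{-1}$ between $L^2(\mu_{M,D})$ and $L^2(\mu_{M_1,D_1})$; also, no ``relabeling of factors'' issue actually arises, since your substitution transforms the infinite product factor by factor.
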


In \cite{Li2015}, Li gave a necessary and sufficent condition for the finite $\mu_{M,D}$-orthogonality, which will be used to prove our main results.

\begin{thm}\cite[Theorem 2.1]{Li2015}\label{th(Li)}
Let $M\in M_{n}(\mathbb{Z})$ be an expanding matrix and $D\subset \mathbb{Z}^{n}$ be a finite digit set with $0\in D$, and let $\mathcal{Z}_{D}^n$ be defined by \eqref{2.2}. Suppose that  $\mathcal{Z}_{D}^n\subset \mathbb{Q}^n $ is a finite set, then
there exist at most finite mutually orthogonal exponential functions in $L^2(\mu_{M,D})$ if and only if $M^{*j}\mathcal{Z}_{D}^n\cap \mathbb{Z}^n=\emptyset$ for all $j\in \mathbb{N}$.
\end{thm}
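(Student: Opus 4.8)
The plan is to prove both implications through the bi-zero set formulation \eqref{2.5}, exploiting that the finiteness and rationality of $\mathcal{Z}_D^n$ confine the whole zero set $\mathcal{Z}(\hat{\mu}_{M,D})$ to a single rational lattice. First I would fix a common denominator $N\in\mathbb{N}$ with $\mathcal{Z}_D^n\subset\frac{1}{N}\mathbb{Z}^n$; since $M^*\in M_n(\mathbb{Z})$, combining \eqref{2.3} and \eqref{2.4} gives $\mathcal{Z}(\hat{\mu}_{M,D})=\bigcup_{j\ge1}M^{*j}(\mathcal{Z}_D^n+\mathbb{Z}^n)\subset\frac{1}{N}\mathbb{Z}^n$. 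Any orthogonal family corresponds to a bi-zero set, and after the harmless normalization $0\in\Lambda$ we obtain $\Lambda\subset\mathcal{Z}(\hat{\mu}_{M,D})\cup\{0\}\subset\frac{1}{N}\mathbb{Z}^n$. This containment is the structural backbone of both directions.

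For the implication that the orbit condition forces finiteness, I would argue by contraposition. Assuming an infinite orthogonal set $\Lambda$, reduce modulo $\mathbb{Z}^n$ into the finite group $\frac{1}{N}\mathbb{Z}^n/\mathbb{Z}^n$ of order $N^n$. By pigeonhole two distinct points $\lambda_1,\lambda_2\in\Lambda$ lie in one coset, so by the bi-zero property $\eta:=\lambda_1-\lambda_2$ is a nonzero element of $\mathcal{Z}(\hat{\mu}_{M,D})\cap\mathbb{Z}^n$. By \eqref{2.4} there is $k\ge1$ with $M^{*-k}\eta\in\mathcal{Z}(m_D)=\mathcal{Z}_D^n+\mathbb{Z}^n$, say $M^{*-k}\eta=z+v$ with $z\in\mathcal{Z}_D^n$ and $v\in\mathbb{Z}^n$; then $M^{*k}z=\eta-M^{*k}v\in\mathbb{Z}^n$, so $M^{*k}\mathcal{Z}_D^n\cap\mathbb{Z}^n\neq\emptyset$, contradicting the hypothesis. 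Here $0\notin\mathcal{Z}_D^n$ (because $m_D(0)=1$) guarantees $z$ is genuinely a zero of $m_D$.

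For the converse I would give an explicit construction. If $M^{*j_0}\mathcal{Z}_D^n\cap\mathbb{Z}^n\neq\emptyset$ for some $j_0\ge1$, choose $z\in\mathcal{Z}_D^n$ with $a:=M^{*j_0}z\in\mathbb{Z}^n$, and set $\lambda_k=\sum_{i=1}^{k}M^{*ij_0}z$ for $k\ge0$. For $k>k'\ge0$ one factors $\lambda_k-\lambda_{k'}=M^{*(k'+1)j_0}(z+w)$, where $w=\sum_{l=1}^{k-k'-1}M^{*lj_0}z\in\mathbb{Z}^n$ since each term $M^{*lj_0}z=M^{*(l-1)j_0}a\in\mathbb{Z}^n$; by the $\mathbb{Z}^n$-periodicity in \eqref{2.3} we get $z+w\in\mathcal{Z}(m_D)$, hence $M^{*-(k'+1)j_0}(\lambda_k-\lambda_{k'})\in\mathcal{Z}(m_D)$ and the corresponding factor of the product \eqref{2.1} vanishes. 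Thus $(\Lambda-\Lambda)\setminus\{0\}\subset\mathcal{Z}(\hat{\mu}_{M,D})$ for $\Lambda=\{\lambda_k:k\ge0\}$, so $E(\Lambda)$ is orthogonal; and since $M$ is expanding the gaps $\lambda_k-\lambda_{k-1}=M^{*kj_0}z$ have norm tending to infinity, forcing $\Lambda$ to be infinite.

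The algebra of these factorizations is routine; the one point genuinely needing the hypothesis $\mathcal{Z}_D^n\subset\mathbb{Q}^n$ finite is the lattice confinement $\mathcal{Z}(\hat{\mu}_{M,D})\subset\frac{1}{N}\mathbb{Z}^n$, without which the pigeonhole step in the finiteness direction collapses. I therefore expect the main conceptual step to be recognizing that finiteness of orthogonal sets is governed entirely by whether a zero of $m_D$ can be dragged into $\mathbb{Z}^n$ by a power of $M^*$: a nonzero integer point of $\mathcal{Z}(\hat{\mu}_{M,D})$ is simultaneously the obstruction exposed by the pigeonhole argument and the seed of the explicit infinite orthogonal set.
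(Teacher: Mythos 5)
Your proof is correct and complete: the paper itself contains no proof of Theorem \ref{th(Li)} (it is imported verbatim from \cite{Li2015}), and your argument is essentially the standard one given in Li's paper. Specifically, confining the whole zero set $\mathcal{Z}(\hat{\mu}_{M,D})$ to a lattice $\frac{1}{N}\mathbb{Z}^{n}$ via the finiteness and rationality of $\mathcal{Z}_{D}^{n}$, pigeonholing modulo $\mathbb{Z}^{n}$ to extract a nonzero integer point $\eta=M^{*k}(z+v)$ for the finiteness direction, and building the infinite bi-zero set $\lambda_{k}=\sum_{i=1}^{k}M^{*ij_{0}}z$ (with distinctness guaranteed by $z\notin\mathbb{Z}^{n}$ and $M$ expanding) for the converse, are exactly the mechanisms on which the cited result rests.
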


The following lemma is an effective method to illustrate that a countable set $\Lambda$ cannot be a spectrum of a measure $\mu$, which will be used in Proposition \ref{th(4.1)}.

\begin{lemma}\cite[Lemma 2.2]{Dai-He-Lau_2014}\label{lem(DHL)}
Let $\mu=\mu_1\ast\mu_2$ be the convolution of two probability measures $\mu_i$, $i=1, 2$,
and they are not Dirac measures.
Suppose that $\Lambda$ is  a bi-zero set of $\mu_1$,
then $\Lambda$ is also  a bi-zero set of $\mu$,
but $\Lambda$ cannot be a spectrum of $\mu$.
\end{lemma}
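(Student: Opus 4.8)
The plan is to separate the statement into its two claims and drive both from the single identity $\hat{\mu}=\hat{\mu}_1\hat{\mu}_2$ valid for the convolution $\mu=\mu_1\ast\mu_2$. For the first claim, since $\Lambda$ is a bi-zero set of $\mu_1$, \eqref{2.5} gives $(\Lambda-\Lambda)\setminus\{0\}\subset\mathcal{Z}(\hat{\mu}_1)$; then for every $\eta\in(\Lambda-\Lambda)\setminus\{0\}$ we have $\hat{\mu}(\eta)=\hat{\mu}_1(\eta)\hat{\mu}_2(\eta)=0$, so $(\Lambda-\Lambda)\setminus\{0\}\subset\mathcal{Z}(\hat{\mu})$ and $\Lambda$ is a bi-zero set of $\mu$ by \eqref{2.5} again. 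This direction is immediate and uses neither nondegeneracy hypothesis.

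For the second claim I would argue by contradiction, exploiting the Jorgensen--Pedersen criterion \cite[Lemma 3.3]{Jorgensen-Pedersen_1985}. Suppose $\Lambda$ (with $0\in\Lambda$) is a spectrum of $\mu$, and write $Q_\nu(\xi)=\sum_{\lambda\in\Lambda}|\hat{\nu}(\xi+\lambda)|^2$ for a probability measure $\nu$. The criterion says this is equivalent to $Q_\mu\equiv1$. On the other hand, because $\Lambda$ is a bi-zero set of $\mu_1$, the family $E(\Lambda)$ is orthonormal in $L^2(\mu_1)$, and Bessel's inequality applied to the unit vector $e^{-2\pi i\langle\xi,\cdot\rangle}\in L^2(\mu_1)$ yields $Q_{\mu_1}(\xi)\le1$ for all $\xi$. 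Since $\mu_2$ is a probability measure we have $|\hat{\mu}_2|\le1$, so the product formula gives the squeeze
\[
Q_\mu(\xi)=\sum_{\lambda\in\Lambda}|\hat{\mu}_1(\xi+\lambda)|^2\,|\hat{\mu}_2(\xi+\lambda)|^2\le Q_{\mu_1}(\xi)\le1 .
\]

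Combining $Q_\mu\equiv1$ with this chain forces equality throughout, i.e. $Q_{\mu_1}(\xi)=Q_\mu(\xi)=1$ for every $\xi$, and hence
\[
0=Q_{\mu_1}(\xi)-Q_\mu(\xi)=\sum_{\lambda\in\Lambda}|\hat{\mu}_1(\xi+\lambda)|^2\bigl(1-|\hat{\mu}_2(\xi+\lambda)|^2\bigr).
\]
Every summand is nonnegative, so each vanishes; selecting the term $\lambda=0$ gives $|\hat{\mu}_1(\xi)|^2\bigl(1-|\hat{\mu}_2(\xi)|^2\bigr)=0$ for all $\xi\in\mathbb{R}^n$. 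As $\hat{\mu}_1$ is continuous with $\hat{\mu}_1(0)=1$, it is nonzero on some ball $B(0,r)$, and therefore $|\hat{\mu}_2(\xi)|=1$ on all of $B(0,r)$.

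The hard part, and the only genuinely analytic step, is to conclude from this local rigidity that $\mu_2$ is a Dirac mass, contradicting the hypothesis. I would pass to the symmetrization $\nu=\mu_2\ast\mu_2^{-}$, where $\mu_2^{-}$ is the reflection of $\mu_2$, so that $\hat{\nu}=|\hat{\mu}_2|^2\equiv1$ on $B(0,r)$. Writing $\int\bigl(1-\cos 2\pi\langle z,\xi\rangle\bigr)\,d\nu(z)=0$ and using that the integrand is nonnegative shows $\langle z,\xi\rangle\in\mathbb{Z}$ for $\nu$-a.e. $z$ and every $\xi$ in a countable dense subset of $B(0,r)$; passing to the limit by continuity extends this to all $\xi\in B(0,r)$, which forces $z=0$. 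Thus $\nu=\delta_0$, and a support argument then gives that $\mu_2$ is a single point mass, contradicting that $\mu_2$ is not a Dirac measure. (Note that only the nondegeneracy of $\mu_2$ is used here; that of $\mu_1$ merely guarantees the bi-zero set $\Lambda$ is nontrivial.) Everything outside this last deduction is bookkeeping with $\hat{\mu}=\hat{\mu}_1\hat{\mu}_2$, Bessel's inequality, and the squeeze $Q_\mu\le Q_{\mu_1}\le1$.
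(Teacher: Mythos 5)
Your proposal is correct, and since the paper does not prove this lemma but simply quotes it from \cite{Dai-He-Lau_2014}, the fair comparison is with that source: your route---transferring the bi-zero property through $\hat{\mu}=\hat{\mu}_1\hat{\mu}_2$, then playing the Jorgensen--Pedersen identity $\sum_{\lambda\in\Lambda}|\hat{\mu}(\xi+\lambda)|^2\equiv1$ against the Bessel bound $\sum_{\lambda\in\Lambda}|\hat{\mu}_1(\xi+\lambda)|^2\le1$ to force $|\hat{\mu}_2|\equiv1$ on a ball around the origin---is essentially the standard argument of the cited lemma. The only genuinely new content is that you make the final rigidity step self-contained via the symmetrization $\nu=\mu_2\ast\mu_2^{-}$, where the original invokes the standard fact that $|\hat{\mu}_2|\equiv1$ near $0$ forces a Dirac mass; your version is sound, since the $\xi$-dependent null sets are correctly absorbed by a countable dense subset of $B(0,r)$, and continuity together with connectedness of the ball (and the value $0$ at $\xi=0$) gives $\langle z,\xi\rangle\equiv0$, hence $z=0$, $\nu=\delta_0$, and $\mathrm{supp}\,\mu_2$ a single point.
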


In order to prove Theorem \ref{th(1.4)}, we will give an  equivalent form of Theorem \ref{th(1.4)} under a similarity transformation. Before stating the form, some technical work needs to be done.

For the matrix $M$ and the digit set $D$ given in Theorem \ref{th(1.4)},  we can let $M=\begin{bmatrix}
a&b \\
c&d
\end{bmatrix}\in M_2(\mathbb{Z})$ and $\alpha_1\beta_2-\alpha_2\beta_1=3^{\eta}\gamma$ for some integers $\eta\geq1$ and  $3\nmid \gamma$. Without loss of generality, we assume $\gcd(\alpha_1,\alpha_2)=\sigma$ with $3\nmid\sigma$ (Otherwise, we can  choose $\sigma=\gcd(\beta_1,\beta_2)$ with $3\nmid\sigma$, since $\gcd(\alpha_1,\alpha_2,\beta_1,\beta_2)=1$).   Let $\alpha_1=\sigma t_1$ and $\alpha_2=\sigma t_2$ with $\gcd(t_1,t_2)=1$, then there exist two integers $p$ and $q$ such that $pt_1+qt_2=1$. Clearly, $\sigma=p\alpha_1+q\alpha_2$ and $\sigma\mid\gamma$. For convenience, we denote $\omega=p\beta_1+q\beta_2$ and $\vartheta=\gamma/\sigma\notin 3\mathbb{Z} $. Let
$P=\begin{bmatrix}
 p&q \\
 -t_2&t_1
\end{bmatrix}$. By noting that $t_2\alpha_1=t_1\alpha_2$ and $t_1\beta_2-t_2\beta_1=3^\eta\vartheta$, we have
\begin{eqnarray}\label{2.6}
\tilde{D}=PD=\begin{bmatrix}
 p&q \\
 -t_2&t_1
\end{bmatrix}\left\{\begin{pmatrix}
0\\0\end{pmatrix},\begin{pmatrix}
\alpha_{1}\\ \alpha_{2}
\end{pmatrix},
\begin{pmatrix}
\beta_{1}\\ \beta_{2}
\end{pmatrix}\right\}=\left\{\begin{pmatrix}
0\\0\end{pmatrix},\begin{pmatrix}
\sigma\\ 0
\end{pmatrix},
\begin{pmatrix}
\omega\\ 3^{\eta}\vartheta
\end{pmatrix}\right\}\subset\mathbb{Z}^2
\end{eqnarray}
and
\begin{eqnarray}\label{2.7}
\tilde{M}=PMP^{-1}
=\begin{bmatrix}
(pa+qc)t_1+(pb+qd)t_2 & (pb+qd)p-(pa+qc)q \\
(ct_1-at_2)t_1+(dt_1-bt_2)t_2 & (dt_1-bt_2)p-(ct_1-at_2)q
\end{bmatrix}
\end{eqnarray}
It is easy to verify that $\tilde M$ is an expanding integer matrix with $\det(\tilde M)=\det(M)\in 3\mathbb{Z}$. We remark that  $2 \sigma-\omega \in3\mathbb{Z}$ if $2\alpha_1-\beta_1, 2\alpha_2-\beta_2\in3\mathbb{Z}$, and
$2 \sigma-\omega \notin3\mathbb{Z}$ if $2\alpha_1-\beta_1\notin 3\mathbb{Z}$ or $2\alpha_2-\beta_2\notin 3\mathbb{Z}$ (see Proposition \ref{pro(CL)}).

It is known that the spectrality  of  $\mu_{\tilde M,\tilde D}$ is the same as that of $\mu_{M,D}$ by Lemma \ref{th(2.1)}. Hence the proof of Theorem \ref{th(1.4)} is equivalent to proving the following

\begin{thm} \label{th(2.4)}
Let $\mu_{\tilde M,\tilde D}$ be defined by \eqref{1.1},  where $\tilde D$ and $\tilde M$  are given by \eqref{2.6} and \eqref{2.7} respectively. Then $\mu_{\tilde M,\tilde D}$ is a spectral measure if and only if there exists a matrix  $Q\in M_2(\mathbb{Z})$  such that  $(\bar{M},\bar{D})$ is admissible, where $\bar{M}=Q\tilde{M}Q^{-1}$ and $\bar{D}=Q\tilde{D}$. In particular, if $2 \sigma-\omega \in3\mathbb{Z}$, then $\mu_{\tilde M,\tilde D}$ is a spectral measure if and only if $L^2(\mu_{\tilde M,\tilde D})$ contains an infinite orthogonal set of exponential functions.
\end{thm}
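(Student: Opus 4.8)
The plan is to prove the two assertions of Theorem~\ref{th(2.4)} separately: the general equivalence first, then the sharper ``in particular'' statement valid when $2\sigma-\omega\in3\mathbb{Z}$ (Case~I). For the general equivalence the sufficiency is immediate: if some $Q\in M_2(\mathbb{Z})$ makes $(\bar M,\bar D)=(Q\tilde M Q^{-1},Q\tilde D)$ admissible, then $\mu_{\bar M,\bar D}$ is spectral by Theorem~\ref{th(1.2)}, and since $Q$ is invertible over $\mathbb{R}$, Lemma~\ref{th(2.1)} (with $A=Q$) transfers spectrality back to $\mu_{\tilde M,\tilde D}$. The whole difficulty is the necessity: assuming $\mu_{\tilde M,\tilde D}$ spectral, one must manufacture an integrality-preserving conjugator $Q$ (so that $Q\tilde M Q^{-1}$ is again integral) for which a compatible set $S$ can be exhibited.

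First I would pin down the arithmetic of the zero set $\mathcal{Z}_{\tilde D}^2$. Writing $m_{\tilde D}(x)=\frac13\bigl(1+e^{2\pi i\sigma x_1}+e^{2\pi i(\omega x_1+3^\eta\vartheta x_2)}\bigr)$, a zero forces the three summands to be the three cube roots of unity, i.e.\ $\sigma x_1\equiv\pm\tfrac13$ and $\omega x_1+3^\eta\vartheta x_2\equiv\mp\tfrac13\pmod1$; this produces an explicit finite rational set whose reductions modulo $3$ are governed by the residue of $2\sigma-\omega$ (the content I would isolate as Proposition~\ref{pro(A)}). Because $\mathcal{Z}_{\tilde D}^2$ is finite and rational, Theorem~\ref{th(Li)} converts spectrality into the concrete requirement that $\tilde M^{*j}\mathcal{Z}_{\tilde D}^2\cap\mathbb{Z}^2\neq\emptyset$ for some $j$. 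I would then read off the constraints this places on $\tilde M\pmod3$ and split into Case~I / Case~II, noting that $\tilde D\pmod3$ sits entirely on the first-coordinate axis (its second coordinates are $0,0,3^\eta\vartheta\equiv0$) and forms a full residue system there exactly when $2\sigma-\omega\in3\mathbb{Z}$. In each normal form for $\tilde M$ I would either show directly that $\mu_{\tilde M,\tilde D}$ cannot be spectral, or write down an explicit integer $Q$ — typically a product of a unimodular alignment and a \emph{non-unimodular} $3$-scaling in the degenerate direction — so that $(\bar M,\bar D)$ carries an obvious unitary Hadamard matrix; this is precisely the case work feeding Theorems~\ref{th(3.4)} and~\ref{th(4.7)}.

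For the ``in particular'' statement, necessity is trivial: a spectrum of $\mu_{\tilde M,\tilde D}$ is an infinite orthonormal, hence orthogonal, set of exponentials. For sufficiency, assume an infinite orthogonal set exists; by Theorem~\ref{th(Li)} this is again the condition $\tilde M^{*j}\mathcal{Z}_{\tilde D}^2\cap\mathbb{Z}^2\neq\emptyset$. Working in Case~I, I would first quarantine a ``bad'' family $\mathfrak{B}$ of matrices for which $\mu_{\tilde M,\tilde D}$ splits as a convolution with a Cantor-type factor; for such $\tilde M$ the given orthogonal set is a bi-zero set of one convolution factor, so Lemma~\ref{lem(DHL)} forbids completeness and rules out spectrality (Lemma~\ref{th(3.2)}). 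For $\tilde M\notin\mathfrak{B}$ I would upgrade the orthogonal set to a basis by exhibiting a genuine Hadamard triple and invoking Theorem~\ref{th(1.2)} (Theorem~\ref{th(3.3)}), which completes the sufficiency.

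The main obstacle is the necessity half of the general equivalence: turning the single arithmetic fact $\tilde M^{*j}\mathcal{Z}_{\tilde D}^2\cap\mathbb{Z}^2\neq\emptyset$ into an explicit, integrality-preserving similarity $Q$ that yields a compatible pair. This is delicate precisely because admissibility is \emph{not} a similarity invariant while spectrality is — indeed, conjugation by a unimodular matrix preserves admissibility, so the correct $Q$ must be genuinely non-unimodular (this is the nontrivial phenomenon flagged in Remark~\ref{th(4.10)}) and depends on the fine $3$-adic structure of $\tilde M$. Guaranteeing simultaneously that $Q\tilde M Q^{-1}$ remains integral and that a unitary $H$ exists for the transformed data is what forces the full split into Cases~I and~II. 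A secondary difficulty is the completeness step: orthogonality is cheap via Theorem~\ref{th(Li)}, but promoting an infinite orthogonal set to an orthonormal basis requires the Hadamard-triple construction rather than a direct estimate.
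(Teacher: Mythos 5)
Your sufficiency argument and the general scaffolding (mod-$3$ normal forms for $\tilde M$, the Case I/II split, $Q$ built from unimodular alignments and the scalings $Q_n$) match the paper, but you have interchanged the two non-spectrality mechanisms between the cases, and this is a genuine gap rather than a stylistic difference. In Case I you propose to eliminate the bad family $\mathfrak{B}$ by a convolution splitting plus Lemma \ref{lem(DHL)}. Even granting the splitting, that lemma only shows that a bi-zero set of a convolution factor cannot be a spectrum, i.e.\ it yields non-spectrality; but the ``in particular'' assertion is the equivalence ``spectral $\Leftrightarrow$ $L^2(\mu_{\tilde M,\tilde D})$ contains an infinite orthogonal set''. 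For $\tilde M\in\mathfrak{B}$ that equivalence forces you to prove the \emph{stronger} statement that no infinite orthogonal set exists at all: if some $\tilde M\in\mathfrak{B}$ carried an infinite orthogonal set, your own convolution argument would turn it into a counterexample to the theorem you are proving, and under the sufficiency hypothesis (an infinite orthogonal set exists) your case $\tilde M\in\mathfrak{B}$ ends with ``non-spectral'', the opposite of what must be concluded. This is exactly why the paper's Lemma \ref{th(3.2)} proves, via the power structure of Lemma \ref{th(3.1)} and Li's criterion (Theorem \ref{th(Li)}), that $\tilde M^{*j}\mathcal{Z}_{\tilde D}^2\cap\mathbb{Z}^2=\emptyset$ for \emph{all} $j$ when $\tilde M\in\mathfrak{B}$ and $2\sigma-\omega\in3\mathbb{Z}$; finite orthogonality is what makes the chain ``infinite orthogonal set $\Rightarrow\tilde M\in\mathfrak{B}_1\cup\mathfrak{B}_2\Rightarrow$ explicit Hadamard triple (after conjugating by $Q_1$ or $Q_s$) $\Rightarrow$ spectral'' run. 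Note also that the convolution dichotomy is not even available for $\mathfrak{B}$ in Case I, since there \emph{every} power $\tilde M^{*j}$ misses the zero set, rather than all powers $j\geq 2$ hitting it.

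Symmetrically, in Case II the only exclusion tool you name is Li's criterion, and it cannot do the job for $\tilde M\in\mathfrak{M}_4$ with $2\sigma-\omega\notin3\mathbb{Z}$: taking $z\in\mathcal{G}_{i}\cap[0,1)^2\subset\mathcal{Z}_{\tilde D}^2$ (Proposition \ref{pro(A)}(ii)) and using $\tilde M^{*2}\in M_2(3\mathbb{Z})$, one gets $\tilde M^{*2(\eta+1)}z\in\mathbb{Z}^2$, so by Theorem \ref{th(Li)} these measures \emph{do} admit infinite orthogonal sets, yet they are non-spectral. Ruling them out is precisely where the paper needs the convolution argument (Proposition \ref{th(4.1)}, built on the dichotomy \eqref{4.5}, which is special to the Case II zero-set structure $\mathcal{G}$), with $\mathfrak{M}_8,\mathfrak{M}_{10}$ reduced to $\mathfrak{M}_4\cup\mathfrak{M}_6$ by unimodular conjugation. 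Finally, in the spectral branch of Case II ($\tilde M\in\mathfrak{R}_3$) there is no ``obvious unitary Hadamard matrix'' to write down: within $\mathfrak{R}_3$ both spectral and non-spectral measures occur (Example \ref{th(4.9)}), and the paper extracts admissibility of $(Q_\eta\tilde MQ_\eta^{-1},Q_\eta\tilde D)$ from the spectrality hypothesis by noting that $Q_\eta\tilde D=\{(0,0)^t,(\sigma,0)^t,(\omega,\vartheta)^t\}$ has determinant $\sigma\vartheta\notin3\mathbb{Z}$ and invoking Theorem \ref{th(Liu)}. So your outline needs three repairs: finite orthogonality (not mere non-spectrality) for $\mathfrak{B}$ in Case I; the convolution/Lemma \ref{lem(DHL)} argument for the $\mathfrak{M}_4$-type classes in Case II; and Theorem \ref{th(Liu)} (or an equivalent substitute) to produce the Hadamard triple in the $\mathfrak{R}_3$ branch.
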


Now we introduce some properties of the digit set $\tilde D$ given in Theorem \ref{th(2.4)}. It is known that
$1+e^{2\pi ix_1}+e^{2\pi ix_2}=0$
if and only if
\begin{equation*}
 \left\{
 \begin{array}{ll}
x_1=\frac{1}{3}+k_1,
\;\;\\
x_2=\frac{2}{3}+k_2,
\end{array}
\right.
\quad {\rm or} \quad
\left\{
 \begin{array}{ll}
x_1=\frac{2}{3}+k_3,
\;\;\\
x_2=\frac{1}{3}+k_4,
\end{array}
\right.
\end{equation*}
where~$k_1,k_2,k_3,k_4\in \mathbb{Z}$. By a direct calculation, we have that
\begin{eqnarray}\label{2.8}
\mathcal{Z}(m_{\tilde D})=Z_0 \cup \tilde{Z}_0,
\end{eqnarray}
where
\begin{equation*}
Z_0=\left\{
\begin{pmatrix}
  \frac{1+3k_1}{3\sigma} \\ \frac{1}{3^{\eta+1}\gamma}(2\sigma-\omega-3\omega k_1+3\sigma k_2)
  \end{pmatrix}\
  :k_1,k_2\in\mathbb{Z}
  \right\},
 \end{equation*}
and
\begin{equation*}
\tilde{Z}_0=\left\{
\begin{pmatrix}
  \frac{2+3k_3}{3\sigma} \\ \frac{1}{3^{\eta+1}\gamma}(\sigma-2\omega-3\omega k_3+3\sigma k_4)
  \end{pmatrix}\
  :k_3,k_4\in\mathbb{Z}
  \right\}.
 \end{equation*}
The following result was proved in \cite[Proposition 3.8]{Chen-Liu_2019} and illustrates some  inner relationships between two digit sets $D$ and $\tilde D$. Since the proof is simple, we give it here.
\begin{pro}\label{pro(CL)}
Let $D$ and $\tilde D$ be defined by \eqref{1.2} and \eqref{2.6}, respectively. Then
\begin{enumerate}[{\rm (i)}]
  \item $2 \sigma-\omega \in3\mathbb{Z}$ if $2\alpha_1-\beta_1, 2\alpha_2-\beta_2\in3\mathbb{Z}$.
 \item $2 \sigma-\omega \notin3\mathbb{Z}$ if $2\alpha_1-\beta_1\notin 3\mathbb{Z}$ or $2\alpha_2-\beta_2\notin 3\mathbb{Z}$.
\end{enumerate}
\end{pro}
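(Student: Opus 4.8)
The plan is to reduce both statements to a single linear-algebra fact over $\mathbb{F}_3$. The starting point is that, by the definitions of $\sigma$ and $\omega$, one has $\sigma=p\alpha_1+q\alpha_2$ and $\omega=p\beta_1+q\beta_2$, whence
\begin{equation*}
2\sigma-\omega=p(2\alpha_1-\beta_1)+q(2\alpha_2-\beta_2).
\end{equation*}
Abbreviating $u=2\alpha_1-\beta_1$ and $v=2\alpha_2-\beta_2$, this reads $2\sigma-\omega=pu+qv$. Statement (i) is then immediate: since $p,q\in\mathbb{Z}$, the assumption $u,v\in3\mathbb{Z}$ forces $pu+qv\in3\mathbb{Z}$.

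For (ii) I would argue by contraposition, showing that $2\sigma-\omega\in3\mathbb{Z}$ already forces both $u\in3\mathbb{Z}$ and $v\in3\mathbb{Z}$. The additional ingredient I would extract is the determinant hypothesis. Writing $\beta_1=2\alpha_1-u=2\sigma t_1-u$ and $\beta_2=2\alpha_2-v=2\sigma t_2-v$ and substituting into $t_1\beta_2-t_2\beta_1$, the $2\sigma t_1t_2$ terms cancel and one obtains
\begin{equation*}
t_2u-t_1v=t_1\beta_2-t_2\beta_1=3^\eta\vartheta\equiv0\pmod{3},
\end{equation*}
the last congruence holding because $\eta\geq1$. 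The crucial point is that the relation $t_2u-t_1v\equiv0\pmod{3}$ is \emph{unconditional}: it follows from the normalization $\alpha_i=\sigma t_i$ together with the defining equation $\alpha_1\beta_2-\alpha_2\beta_1=3^\eta\gamma$, and does not rely on any assumption about $u$ and $v$.

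The key step is then to pair this congruence with the hypothesis $pu+qv\equiv0\pmod{3}$ and read the two relations as a homogeneous linear system in $(u,v)$ over $\mathbb{F}_3$ with coefficient matrix $\left[\begin{smallmatrix} t_2 & -t_1 \\ p & q \end{smallmatrix}\right]$. Its determinant equals $t_2q+t_1p=pt_1+qt_2=1$, which is invertible in $\mathbb{F}_3$; hence the system admits only the trivial solution $u\equiv v\equiv0\pmod{3}$, which is exactly the contrapositive of (ii). I do not expect a serious obstacle, since the whole argument is elementary once the determinant equation is recast as the congruence $t_2u-t_1v\equiv0$. The only point deserving care is selecting the pairing that renders the resulting $2\times2$ system nonsingular modulo $3$; this is guaranteed precisely by the relation $pt_1+qt_2=1$ built into the matrix $P$, so the normalization chosen in \eqref{2.6} is doing the essential work.
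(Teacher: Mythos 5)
Your proof is correct, and part (i) coincides with the paper's one-line argument. For part (ii) the ingredients you use are exactly those in the paper's proof --- the expansion $2\sigma-\omega=p(2\alpha_1-\beta_1)+q(2\alpha_2-\beta_2)$, the congruence $t_2(2\alpha_1-\beta_1)-t_1(2\alpha_2-\beta_2)=t_1\beta_2-t_2\beta_1=3^\eta\vartheta\equiv 0\pmod 3$, and the normalization $pt_1+qt_2=1$ --- but the logical packaging differs. The paper argues directly: assuming without loss of generality that $2\alpha_1-\beta_1\notin 3\mathbb{Z}$, it eliminates explicitly to obtain the identity $t_1\bigl(p(2\alpha_1-\beta_1)+q(2\alpha_2-\beta_2)\bigr)=(2\alpha_1-\beta_1)-q3^\eta\vartheta$, whence $t_1(2\sigma-\omega)\notin 3\mathbb{Z}$ and so $2\sigma-\omega\notin 3\mathbb{Z}$. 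You instead prove the contrapositive by reading the two congruences as a homogeneous linear system in $(2\alpha_1-\beta_1,\,2\alpha_2-\beta_2)$ over $\mathbb{F}_3$ whose coefficient matrix has determinant $pt_1+qt_2=1$, hence admits only the trivial solution. Inverting your $2\times 2$ matrix reproduces precisely the paper's identity (together with its symmetric counterpart isolating $2\alpha_2-\beta_2$), so the underlying computation is the same; what your version buys is symmetry --- no case distinction or WLOG, both congruences drop out simultaneously --- and a transparent explanation of why $\det P=1$ is the decisive fact, whereas the paper's version is more concrete in exhibiting the explicit integer combination that witnesses the non-divisibility.
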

\begin{proof}
It is easy to verify that
$2\sigma-\omega=p(2\alpha_1-\beta_1)+q(2\alpha_2-\beta_2)$, thus (i) holds.
For (ii), without loss of generality, we assume $2\alpha_1-\beta_1\notin 3\mathbb{Z}$. Since $\alpha_1=\sigma t_1$, $\alpha_2=\sigma t_2$ and $t_1\beta_2-t_2\beta_1=3^\eta\vartheta$, we have $t_2\alpha_1=t_1\alpha_2$ and
\begin{align}\label{eq(2.9)}
q3^\eta\vartheta=q(t_1\beta_2-t_2\beta_1)=qt_1(\beta_2-2\alpha_2)+qt_2(2\alpha_1-\beta_1).
\end{align}
Multiplying $2\alpha_1-\beta_1$ on both sides of $pt_1+qt_2=1$, we get $pt_1(2\alpha_1-\beta_1)+qt_2(2\alpha_1-\beta_1)=2\alpha_1-\beta_1$. This together with \eqref{eq(2.9)} yields that $t_1(p(2\alpha_1-\beta_1)+q(2\alpha_2-\beta_2))=-q3^\eta\vartheta+2\alpha_1-\beta_1$. Hence $2 \sigma-\omega=p(2\alpha_1-\beta_1)+q(2\alpha_2-\beta_2)\notin 3\mathbb{Z}$ by $2\alpha_1-\beta_1\notin 3\mathbb{Z}$.
\end{proof}

In order to characterize the properties of $\mathcal{Z}(m_{\tilde D})$, we first define the following four sets. Let
\begin{equation*}
\mathcal{H}=\left\{
\begin{pmatrix}
  \frac{\ell_{1}}{3\gamma} \\ \frac{\ell_{2}}{3^{\eta}\gamma}
  \end{pmatrix}
  : \ell_{1}\in\mathbb{Z}\setminus3\mathbb{Z},
  \ell_{2}\in\mathbb{Z}
  \right\}, \quad
\mathcal{G}=\left\{
\begin{pmatrix}
  \frac{\ell_{1}}{3\gamma} \\ \frac{\ell_2}{3^{\eta+1}\gamma}
  \end{pmatrix}
  : \ell_{1},\ell_{2}\in\mathbb{Z}\setminus3\mathbb{Z}
  \right\},
\end{equation*}
\begin{equation*}
\mathcal{G}_{1}=\left\{
\begin{pmatrix}
  \frac{\ell_1}{3} \\ \frac{\ell_2}{3^{\eta+1}}
  \end{pmatrix}
  :\ell_{1}, \ell_{2}\in\mathbb{Z}\setminus3\mathbb{Z}, \ell_{1}=\ell_{2}\;({\rm mod} \ 3)
  \right\}
\end{equation*}
and
\begin{equation*}
\mathcal{G}_{2}=\left\{
\begin{pmatrix}
  \frac{\ell_1}{3} \\ \frac{\ell_2}{3^{\eta+1}}
  \end{pmatrix}
  :\ell_{1}, \ell_{2}\in\mathbb{Z}\setminus3\mathbb{Z}, \ell_{1}\neq\ell_{2}\;({\rm mod} \ 3)
  \right\}.
\end{equation*}
\noindent
The following is an elementary but useful fact in our investigation.
\begin{pro}\label{pro(A)}
With the above notations, we have the following assertions:
\begin{enumerate}[{\rm (i)}]
  \item If $2 \sigma-\omega \in3\mathbb{Z}$, then $\gamma\mathcal{H}\subset\mathcal{Z}(m_{\tilde D})\subset\mathcal{H}$.
 \item If $2 \sigma-\omega \notin3\mathbb{Z}$, then $\mathcal{G}_{i}\subset\mathcal{Z}(m_{\tilde D})\subset\mathcal{G}$ for $i=1$ or $2$.
\end{enumerate}
\end{pro}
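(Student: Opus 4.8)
The plan is to read off everything from the explicit description \eqref{2.8} of $\mathcal{Z}(m_{\tilde D})=Z_0\cup\tilde Z_0$, together with the elementary fact that $1+e^{2\pi iu}+e^{2\pi iv}=0$ precisely when the pair of residues $(u\bmod 1,\,v\bmod 1)$ equals $(1/3,2/3)$ or $(2/3,1/3)$. Here $u=\sigma x_1$ and $v=\omega x_1+3^\eta\vartheta x_2$, so the vanishing of $m_{\tilde D}$ amounts to $\sigma x_1\notin\mathbb{Z}$ together with $\sigma x_1+\bigl(\omega x_1+3^\eta\vartheta x_2\bigr)\in\mathbb{Z}$. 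I will also record the one arithmetic input I need throughout: since $3\nmid\sigma$, the congruence $2\sigma-\omega\equiv0$ is equivalent to $\sigma+\omega\equiv0\pmod 3$ (because $\omega\equiv2\sigma$ forces $\sigma+\omega\equiv3\sigma\equiv0$, and conversely). Thus the hypotheses of (i) and (ii) translate into $\sigma+\omega\equiv0$ and $\sigma+\omega\not\equiv0\pmod3$, respectively.

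For the two ``outer'' inclusions $\mathcal{Z}(m_{\tilde D})\subset\mathcal{H}$ and $\mathcal{Z}(m_{\tilde D})\subset\mathcal{G}$, I would take a generic point of $Z_0$ and of $\tilde Z_0$ and clear denominators using $\gamma=\sigma\vartheta$. For the first coordinate, $x_1=\frac{(1+3k_1)\vartheta}{3\gamma}$ (resp.\ $\frac{(2+3k_3)\vartheta}{3\gamma}$), whose numerator is coprime to $3$ since $3\nmid\vartheta$; this already places $x_1$ in the required form with $\ell_1\in\mathbb{Z}\setminus3\mathbb{Z}$ in both cases. The decisive step is the second coordinate: the numerator of $x_2$ over $3^{\eta+1}\gamma$ is congruent to $2\sigma-\omega$ (for $Z_0$) or to $\sigma-2\omega\equiv\sigma+\omega$ (for $\tilde Z_0$) modulo $3$. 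In Case (i) both are $\equiv0$, so $x_2=\frac{\ell_2}{3^\eta\gamma}$ with $\ell_2\in\mathbb{Z}$, landing in $\mathcal{H}$; in Case (ii) both are $\not\equiv0$ (for $\tilde Z_0$ one uses again the equivalence above to see $3\nmid(\sigma-2\omega)$), so $\ell_2\in\mathbb{Z}\setminus3\mathbb{Z}$, landing in $\mathcal{G}$.

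For the ``inner'' inclusions I would substitute a point of $\gamma\mathcal{H}$, resp.\ $\mathcal{G}_i$, directly into $m_{\tilde D}$ and exploit that $3^\eta\vartheta\cdot\frac{\ell_2}{3^\eta}=\vartheta\ell_2\in\mathbb{Z}$ drops out of the exponent (for $\gamma\mathcal{H}$), while $3^\eta\vartheta\cdot\frac{\ell_2}{3^{\eta+1}}=\frac{\vartheta\ell_2}{3}$ survives (for $\mathcal{G}_i$). In each case the vanishing condition collapses to a single linear congruence modulo $3$: for (i) it is $(\sigma+\omega)\ell_1\equiv0$, which holds automatically since $\sigma+\omega\equiv0$ and $\sigma\ell_1\not\equiv0$ forces the residues to be $\{1,2\}$; for (ii) it is
\[
(\sigma+\omega)\ell_1+\vartheta\ell_2\equiv0\pmod3 .
\]
Because $\sigma+\omega\not\equiv0$ and $\vartheta\not\equiv0$, I can solve for $\ell_2\equiv r\,\ell_1\pmod3$ with $r=-(\sigma+\omega)\vartheta^{-1}\in\{1,2\}$; the value $r=1$ gives exactly $\ell_1\equiv\ell_2$, i.e.\ $\mathcal{G}_1\subset\mathcal{Z}(m_{\tilde D})$, and $r=2$ gives $\ell_1\not\equiv\ell_2$, i.e.\ $\mathcal{G}_2\subset\mathcal{Z}(m_{\tilde D})$, which is the asserted dichotomy.

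The routine part is the denominator clearing; the only genuinely delicate point is the residue bookkeeping modulo $3$, and within that the key observation for (ii) is that the two-residue condition $\{u,v\}=\{1/3,2/3\}$ reduces to the single congruence $(\sigma+\omega)\ell_1+\vartheta\ell_2\equiv0$: since $\sigma\ell_1\not\equiv0$ is automatic, ``the residues sum to zero'' already forces them to be the pair $\{1,2\}$, so no separate non-vanishing check for the second residue is needed. Pinning down which of $\mathcal{G}_1,\mathcal{G}_2$ is selected then reduces to computing the single unit $r$ above, which I expect to be the main (though still elementary) obstacle.
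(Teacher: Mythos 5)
Your proposal is correct and is essentially the paper's own argument: the outer inclusions are read off the parametrization \eqref{2.8} by reducing the second-coordinate numerators modulo $3$, exactly as the paper does, and your single congruence $(\sigma+\omega)\ell_1+\vartheta\ell_2\equiv 0\pmod 3$ for the inner inclusions is precisely what the paper's requirement $k_2\in\mathbb{Z}$ in \eqref{2.10} and \eqref{2.11} unwinds to (under $\sigma\ell_1\equiv 1\pmod 3$ it reads $\omega\ell_1+\vartheta\ell_2\equiv 2$, which is the paper's condition), with your unit $r=-(\sigma+\omega)\vartheta^{-1}\bmod 3$ neatly packaging the paper's case analysis on $\omega\bmod 3$ and making the $\mathcal{G}_1$ versus $\mathcal{G}_2$ selection rule explicit. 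One caution: your opening reformulation that vanishing of $m_{\tilde D}$ ``amounts to'' $\sigma x_1\notin\mathbb{Z}$ together with $\sigma x_1+(\omega x_1+3^{\eta}\vartheta x_2)\in\mathbb{Z}$ omits the requirement $3\sigma x_1\in\mathbb{Z}$ and is therefore false for general $(x_1,x_2)$; this does no damage here, since you apply it only at points with $x_1=\ell_1/3$, where that requirement is automatic, as your final paragraph itself observes.
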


\begin{proof}
(i) Since $2 \sigma-\omega \in3\mathbb{Z}$ and $\gamma=\sigma\vartheta$, it follows from \eqref{2.8} that $\mathcal{Z}(m_{\tilde D})\subset\mathcal{H}$. Now we prove $\gamma\mathcal{H}\subset\mathcal{Z}(m_{\tilde D})$. It is equivalent to proving that for any $(\frac{\ell_1}{3}, \frac{\ell_2}{3^{\eta}})^t\in\gamma\mathcal{H}$ with $\ell_{1}\notin3\mathbb{Z}$ and  $\ell_{2}\in\mathbb{Z}$, there exist $k_1,k_2\in \mathbb{Z}$ such that
\begin{align}\label{2.10}
\left\{
\begin{array}{ll}
1+3k_1=\sigma\ell_1, \\
2\sigma -\omega-3\omega k_1+3\sigma k_2=3\gamma\ell_2,
\end{array}
\right. \quad {\rm or} \quad
\left\{
\begin{array}{ll}
2+3k_1=\sigma\ell_1, \\
\sigma -2\omega-3\omega k_1+3\sigma k_2=3\gamma\ell_2.
\end{array}
\right.
\end{align}
Notice that $\sigma,\ell_{1}\notin3\mathbb{Z}$, then there must exist $k_1\in \mathbb{Z}$  such that $\sigma\ell_{1}=3k_1+1$ or $\sigma\ell_{1}=3k_1+2$. Without loss of generality, we assume that $\sigma\ell_{1}=3k_1+1$, which yields $\sigma=\ell_{1}\pmod 3$. In view of $\gamma=\sigma\vartheta$ and \eqref{2.10}, we only need to prove  $$k_2=\vartheta\ell_2+\frac{\omega\ell_1-2}{3}\in\mathbb{Z}.$$
According to  $\sigma\notin3\mathbb{Z}$, $2 \sigma-\omega \in3\mathbb{Z}$ and $\sigma=\ell_{1} \pmod 3$, we conclude $\omega\notin3\mathbb{Z}$ and $\omega\neq\ell_{1} \pmod 3$. This implies that
$\omega\ell_1=3a+2$ for some $a\in\mathbb{Z}$, thus $k_2=\vartheta\ell_2+a\in \mathbb{Z}$. Hence \eqref{2.10} holds. This proves $\gamma\mathcal{H}\subset\mathcal{Z}(m_{\tilde D})$.

(ii) As $2 \sigma-\omega \notin3\mathbb{Z}$ and $\gamma=\sigma\vartheta$, it follows from \eqref{2.8} that $\mathcal{Z}(m_{\tilde D})\subset\mathcal{G}$. Now we prove $\mathcal{G}_{i}\subset\mathcal{Z}(m_{\tilde D})$ for $i=1$ or $2$. It suffices to  prove that for any $(\frac{\ell_1}{3}, \frac{\ell_2}{3^{\eta+1}})^t\in\mathcal{G}_{i}$ with $\ell_{1}, \ell_{2}\notin3\mathbb{Z}$, $\ell_{1}=\ell_{2}\;({\rm mod} \ 3)$ or $\ell_{1}\neq\ell_{2}\;({\rm mod} \ 3)$, there exist $k_1,k_2\in \mathbb{Z}$ such that
\begin{align}\label{2.11}
\left\{
\begin{array}{ll}
1+3k_1=\sigma\ell_1, \\
2\sigma -\omega-3\omega k_1+3\sigma k_2=\gamma\ell_2,
\end{array}
\right. \quad {\rm or} \quad
\left\{
\begin{array}{ll}
2+3k_1=\sigma\ell_1, \\
\sigma -2\omega-3\omega k_1+3\sigma k_2=\gamma\ell_2.
\end{array}
\right.
\end{align}
It follows from $\sigma,\ell_{1}\notin3\mathbb{Z}$  that $\sigma\ell_{1}=3k_1+1$ or $\sigma\ell_{1}=3k_1+2$ for some $k_1\in \mathbb{Z}$. Without loss of generality, we assume that $\sigma\ell_{1}=3k_1+1$, which means that $\ell_{1}=\sigma\;({\rm mod} \ 3)$. Using $\gamma=\sigma\vartheta$ and \eqref{2.11}, we only need to prove
\begin{equation*}
k_2=\frac{\vartheta\ell_2+\omega\ell_1-2}{3}\in\mathbb{Z}
\quad {\rm for} \ \ell_{2}=\ell_{1}\;({\rm mod} \ 3)\  {\rm or}\ \ell_{2}\neq\ell_{1}\;({\rm mod} \ 3).
\end{equation*}
We divide the proof into the following two cases.

If $\omega \in3\mathbb{Z}$, one can write $\omega=3\omega_1$ with $\omega_1 \in\mathbb{Z}$. Since $\vartheta\notin3\mathbb{Z}$ and $\ell_{1}=\sigma\;({\rm mod} \ 3)$, there  exists $b\in\mathbb{Z}$ so that $\vartheta\ell_2=3b+2$ for $\ell_{2}=\ell_{1}\;({\rm mod} \ 3)$ or $\ell_{2}\neq\ell_{1}\;({\rm mod} \ 3)$. Thus $k_2=\omega_1\ell_1+b\in \mathbb{Z}$.

If $\omega \notin3\mathbb{Z}$, then $\omega=\sigma \pmod 3$ by
$2\sigma-\omega\notin3\mathbb{Z}$. This together with $\sigma=\ell_{1}\;({\rm mod} \ 3)$ yields that $\omega\ell_1=3c+1$ for some $c\in\mathbb{Z}$.  Note that $\vartheta\notin3\mathbb{Z}$, then there must exist $d\in\mathbb{Z}$ so that $\vartheta\ell_2=3d+1$ for $\ell_{2}=\ell_{1}\;({\rm mod} \ 3)$ or $\ell_{2}\neq\ell_{1}\;({\rm mod} \ 3)$. Therefore, $k_2=c+d\in \mathbb{Z}$.

This proves \eqref{2.11}, and hence $\mathcal{G}_{i}\subset\mathcal{Z}(m_{\tilde D})$ for $i=1$ or $2$.
\end{proof}

The following lemma is an effective method to  judge whether $(\tilde M, \tilde D)$ is admissible for some special forms of $\tilde M$ and $\tilde D$. It is needed in the proof of  Theorem \ref{th(2.4)}.
\begin{lemma}\label{th(2.7)}
Let $0\in\mathcal{J}$ be a  set with $\#\mathcal{J}=3$. If  $(\mathcal{J}-\mathcal{J})\setminus\{0\}\subset\mathcal{Z}(m_{\tilde D})$ and $\tilde{M}^*\mathcal{J}\subset \mathbb{Z}^2$, then $(\tilde M, \tilde D)$ is  admissible.
\end{lemma}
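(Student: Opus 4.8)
The plan is to exhibit an explicit set $S$ witnessing admissibility, and the natural candidate is $S=\tilde M^*\mathcal J$. First I would record the two easy structural facts about this choice. Since $\tilde M$ is expanding it is nonsingular, so $\tilde M^*$ is a bijection and $\#S=\#\mathcal J=3=\#\tilde D$; moreover $S\subset\mathbb Z^2$ is exactly the hypothesis $\tilde M^*\mathcal J\subset\mathbb Z^2$. Writing $\mathcal J=\{w_0,w_1,w_2\}$ with $w_0=0$, we then have $S=\{\tilde M^*w_0,\tilde M^*w_1,\tilde M^*w_2\}\subset\mathbb Z^2$, and it remains to verify that the associated matrix $H=\frac{1}{\sqrt3}\big[e^{2\pi i\langle\tilde M^{-1}d,\,s\rangle}\big]_{d\in\tilde D,\,s\in S}$ of Definition \ref{deA} is unitary.

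The key observation is that the inner products defining the entries of $H$ collapse. Using $\langle Ax,y\rangle=\langle x,A^*y\rangle$ together with $(\tilde M^{-1})^*\tilde M^*=I$, one gets
$$\langle \tilde M^{-1}d,\ \tilde M^*w_k\rangle=\langle d,\ (\tilde M^{-1})^*\tilde M^*w_k\rangle=\langle d,\ w_k\rangle,$$
so the $(d,w_k)$-entry of $H$ is simply $e^{2\pi i\langle d,w_k\rangle}$, independent of $\tilde M$. In particular the matrix $H$ is unchanged if we re-index its columns by $\mathcal J$ rather than by $S$, which is precisely what makes the hypothesis on $\mathcal J-\mathcal J$ directly usable.

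To check unitarity I would compute $H^*H$ directly. Indexing columns by $w_k,w_l\in\mathcal J$, the $(k,l)$-entry of $H^*H$ is
$$\frac{1}{3}\sum_{d\in\tilde D} e^{-2\pi i\langle d,w_k\rangle}e^{2\pi i\langle d,w_l\rangle}=\frac{1}{3}\sum_{d\in\tilde D}e^{2\pi i\langle d,\,w_l-w_k\rangle}=m_{\tilde D}(w_l-w_k),$$
by the very definition of the mask polynomial $m_{\tilde D}$. For $k=l$ this equals $m_{\tilde D}(0)=1$, and for $k\neq l$ we have $w_l-w_k\in(\mathcal J-\mathcal J)\setminus\{0\}\subset\mathcal Z(m_{\tilde D})$ by hypothesis, so $m_{\tilde D}(w_l-w_k)=0$. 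Hence $H^*H=I$; since $H$ is a square matrix this gives that $H$ is unitary, and therefore $(\tilde M,\tilde D)$ is admissible with $S=\tilde M^*\mathcal J$.

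There is no genuinely hard step here: the entire content is the correct guess $S=\tilde M^*\mathcal J$ and the adjoint identity that reduces the Hadamard entries to $e^{2\pi i\langle d,w_k\rangle}$. The only point to state with care is that $\#S=3$, which relies on the nonsingularity of $\tilde M$ (so that $\tilde M^*$ does not collapse the three points of $\mathcal J$); once that and the integrality hypothesis $\tilde M^*\mathcal J\subset\mathbb Z^2$ are in place, the identification of $H^*H$ with the matrix of values $m_{\tilde D}(w_l-w_k)$ is automatic.
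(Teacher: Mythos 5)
Your proof is correct and follows essentially the same route as the paper: the paper's own (much terser) argument also takes $S=\tilde M^*\mathcal J$ and observes that compatibility of $(\tilde M^{-1}\tilde D,S)$ is equivalent to $(\mathcal J-\mathcal J)\setminus\{0\}\subset\mathcal Z(m_{\tilde D})$, which is exactly the equivalence you verify in detail via the adjoint identity and the computation $(H^*H)_{kl}=m_{\tilde D}(w_l-w_k)$.
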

\begin{proof}
Let $S=\tilde{M}^*\mathcal{J}$. Since $0\in\mathcal{J}$ and $\#\mathcal{J}=\#\tilde{D}=3$, by Definition \ref{deA},  it is clear that  $(\tilde{M}^{-1}\tilde{D},S)$ is a compatible
pair if and only if $(\mathcal{J}-\mathcal{J})\setminus\{0\}\subset\mathcal{Z}(m_{\tilde D})$. Hence the lemma follows.
\end{proof}

At the end of this section, we give two remarks that will be used to prove our main results. Throughout the paper, we denote
\begin{equation}\label{2.12}
\mathcal{J}_0=\left\{
\begin{pmatrix}
  0 \\ 0
  \end{pmatrix},
  \begin{pmatrix}
  \frac{1}{3} \\ 0
  \end{pmatrix},
 \begin{pmatrix}
  \frac{2}{3} \\ 0
  \end{pmatrix}
  \right\}, \quad
\mathcal{J}_1=\left\{
\begin{pmatrix}
  0 \\ 0
  \end{pmatrix},
  \begin{pmatrix}
  \frac{1}{3} \\ \frac{2}{3}
  \end{pmatrix},
 \begin{pmatrix}
  \frac{2}{3} \\ \frac{1}{3}
  \end{pmatrix}
  \right\} \quad {\rm and}\quad
\mathcal{J}_2=\left\{
\begin{pmatrix}
  0 \\ 0
  \end{pmatrix},
  \begin{pmatrix}
  \frac{1}{3} \\ \frac{1}{3}
  \end{pmatrix},
 \begin{pmatrix}
  \frac{2}{3} \\ \frac{2}{3}
  \end{pmatrix}
  \right\}.
\end{equation}
\begin{re}\label{re(A)}
{\rm  By \eqref{2.2} and Proposition \ref{pro(A)}, it is easy to verify that
$\mathcal{Z}_{\tilde D}^2\subset\mathbb{Q}^2$
is a finite set.
Moreover, if $2 \sigma-\omega \in3\mathbb{Z}$ and  $\eta>0$, then
$\mathcal{J}_{i}\setminus\{0\},(\mathcal{J}_{i}-\mathcal{J}_{i})\setminus\{0\}\subset\mathcal{Z}(m_{\tilde D})$ for $i\in\{0,1,2\}$.}
\end{re}

\begin{re}\label{re(B)}
{\rm If $\eta=0$ in $\tilde D$, using the same proof as  Proposition \ref{pro(A)}, the following statements hold.
\begin{enumerate}[{\rm (i)}]
  \item If $2 \sigma-\omega \in3\mathbb{Z}$, then
$\mathcal{J}_{0}\setminus\{0\},(\mathcal{J}_{0}-\mathcal{J}_{0})\setminus\{0\}\subset\mathcal{Z}(m_{\tilde D})$.
 \item If $2 \sigma-\omega \notin3\mathbb{Z}$, then
$\mathcal{J}_{i}\setminus\{0\},(\mathcal{J}_{i}-\mathcal{J}_{i})\setminus\{0\}\subset\mathcal{Z}(m_{\tilde D})$ for $i=1$ or $2$.
\end{enumerate} }
\end{re}

\section{\bf Proof of Theorem \ref{th(1.4)} in Case I\label{sect.3}}

In the present section, we focus on proving Theorem \ref{th(1.4)} in Case I, which is equivalent to proving Theorem \ref{th(2.4)} in the case of  $2 \sigma-\omega \in3\mathbb{Z}$.
To this end, we  use the residue system of modulo $3$ and rewrite the matrix $\tilde{M}$ given by \eqref{2.7} in the following form:
\begin{equation}\label{3.1}
\tilde{M}=3\begin{bmatrix}
a&b\\
3^{s-1}c&d
\end{bmatrix}+M_k:=M^\prime+M_k,
\end{equation}
where $s\geq1$, $a,b,d\in \mathbb{Z}$ and  $c\in(\mathbb{Z}\setminus3\mathbb{Z})\cup\{0\}$, and the entries of the
matrix $M_k$ are from the set $\{0,1,2\}$. It is obvious that  $s$ can be any positive integer if $c=0$. Without loss of generality, in the
rest of this paper, we always assume that $s\geq\eta$ in this case.  As $\det(\tilde{M})\in 3\mathbb{Z}$,
there are $10$ different matrices $M_k$ as following:
\begin{equation*}
\begin{array}{l}
{M_1} =\begin{bmatrix}
0&0\\0&0
\end{bmatrix},\;  \  {M_2} =\begin{bmatrix}
p_1&0\\0&0
\end{bmatrix}, \;  \  {M_3} =\begin{bmatrix}
0&p_2\\0&0
\end{bmatrix}, \;   \  {M_4} =\begin{bmatrix}
0&0\\p_3&0
\end{bmatrix}, \;   \
{M_5} =\begin{bmatrix}
0&0\\0&p_4
\end{bmatrix},\\  {M_6} =\begin{bmatrix}
p_1&0\\p_3&0
\end{bmatrix}, \;  \     {M_7} =\begin{bmatrix}
p_1&p_2\\0&0
\end{bmatrix}, \;  \ {M_8} =\begin{bmatrix}
0&0\\p_3&p_4
\end{bmatrix}, \;   \
{M_9} =\begin{bmatrix}
0&p_2\\0&p_4
\end{bmatrix}, \;  \  {M_{10}} =\begin{bmatrix}
p_1&p_2\\p_3&p_4
\end{bmatrix},
\end{array}
\end{equation*}
where $p_1,p_2,p_3,p_4\in\{1,2\}$ and $p_1p_4-p_2p_3\in 3\mathbb{Z}$. Fix $k\in\{1,2,\ldots,10\}$, we denote
\begin{equation}\label{3.2}
\mathfrak{M}_k=\left\{\tilde{M}: \tilde{M}=
M^\prime+M_k\right\}.
\end{equation}

 The following lemma illustrates a basic property of the matrix $\tilde{M}\in\mathfrak{M}_2$, which will be needed in proving Lemma \ref{th(3.2)}.
\begin{lemma}\label{th(3.1)}
If $\tilde{M}\in\mathfrak{M}_2$, then for any $\ell\in \mathbb{N}$, there exist $a_\ell,b_\ell,d_\ell\in\mathbb{Z}$ and $c_\ell\in(\mathbb{Z}\setminus3\mathbb{Z})\cup\{0\}$ such that $$\tilde{M}^{*\ell}=\begin{bmatrix}
3a_\ell+p_1^\ell&3^{s}c_\ell\\
3b_\ell&3d_\ell
\end{bmatrix}.$$
In particular, $c_\ell=0$ if $c=0$.
\end{lemma}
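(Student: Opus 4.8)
The plan is to proceed by induction on $\ell$, exploiting the explicit block structure of $\tilde{M}^*$. Since $\tilde{M}\in\mathfrak{M}_2$ means $\tilde{M}=M'+M_2=\begin{bmatrix} 3a+p_1 & 3b \\ 3^sc & 3d\end{bmatrix}$, its conjugate transpose is $\tilde{M}^*=\begin{bmatrix} 3a+p_1 & 3^sc \\ 3b & 3d\end{bmatrix}$, which already has the asserted shape with $a_1=a$, $b_1=b$, $c_1=c$, $d_1=d$; this disposes of the base case $\ell=1$ (note $p_1^1=p_1$, and $c_1=c\in(\mathbb{Z}\setminus3\mathbb{Z})\cup\{0\}$, so the final clause holds for $\ell=1$). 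For the inductive step I would assume the formula for some $\ell$ and compute $\tilde{M}^{*(\ell+1)}=\tilde{M}^{*\ell}\tilde{M}^*$ by multiplying the two matrices and reading off the four entries.

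The three entries away from position $(1,2)$ are routine. The $(1,1)$-entry expands to $(3a_\ell+p_1^\ell)(3a+p_1)+3^{s+1}c_\ell b=p_1^{\ell+1}+3a_{\ell+1}$ with $a_{\ell+1}=3a_\ell a+a_\ell p_1+p_1^\ell a+3^sc_\ell b\in\mathbb{Z}$, giving the required form $3a_{\ell+1}+p_1^{\ell+1}$. The $(2,1)$- and $(2,2)$-entries are visibly divisible by $3$, so they equal $3b_{\ell+1}$ and $3d_{\ell+1}$ with $b_{\ell+1}=b_\ell(3a+p_1)+3d_\ell b\in\mathbb{Z}$ and $d_{\ell+1}=3^sb_\ell c+3d_\ell d\in\mathbb{Z}$. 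The decisive entry is the $(1,2)$-entry, which factors as $3^s\bigl(c\,p_1^\ell+3(ca_\ell+c_\ell d)\bigr)=3^sc_{\ell+1}$, so that $c_{\ell+1}:=c\,p_1^\ell+3(ca_\ell+c_\ell d)$.

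The only step that is not automatic is verifying that $c_{\ell+1}$ stays in $(\mathbb{Z}\setminus3\mathbb{Z})\cup\{0\}$, and I would argue this modulo $3$. Since $c_{\ell+1}\equiv c\,p_1^\ell\ (\mathrm{mod}\ 3)$ and $p_1\in\{1,2\}$ is coprime to $3$, the factor $p_1^\ell$ is never divisible by $3$; hence if $3\nmid c$ then $3\nmid c\,p_1^\ell$ and so $3\nmid c_{\ell+1}$, whereas if $c=0$ the induction hypothesis forces $c_\ell=0$ and therefore $c_{\ell+1}=0$. This simultaneously propagates the invariant $c_{\ell+1}\in(\mathbb{Z}\setminus3\mathbb{Z})\cup\{0\}$ and establishes the final clause ``$c_\ell=0$ if $c=0$.'' The whole argument is elementary matrix bookkeeping; the one genuinely load-bearing point is this modulo-$3$ tracking of the off-diagonal coefficient $c_\ell$, which is exactly where the constraint $p_1\in\{1,2\}$ (rather than $p_1\in\{0,1,2\}$) coming from the form of $M_2$ is used.
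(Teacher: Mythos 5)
Your proof is correct and follows essentially the same route as the paper's: induction on $\ell$, explicit multiplication $\tilde{M}^{*(\ell+1)}=\tilde{M}^{*\ell}\tilde{M}^*$, and a modulo-$3$ analysis of the $(1,2)$-entry $c_{\ell+1}\equiv c\,p_1^\ell \pmod 3$, with the clause ``$c_\ell=0$ if $c=0$'' carried along as part of the induction invariant exactly as in the paper. No gaps; the bookkeeping and the load-bearing use of $p_1\in\{1,2\}$ match the published argument.
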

\begin{proof}
We prove the lemma  by induction.
It is obvious that  the lemma holds for $\ell=1$.
Inductively, we assume that it holds for $\ell=k$. That is,  $$\tilde{M}^{*k}=\begin{bmatrix}
3a_k+p_1^k&3^{s}c_k\\
3b_k&3d_k
\end{bmatrix},$$
where $a_k,b_k,d_k\in\mathbb{Z}$ and $c_k\in(\mathbb{Z}\setminus3\mathbb{Z})\cup\{0\}$. In particular, $c_k=0$ if $c=0$.

We then consider $\ell=k+1$. By inductive hypothesis, we have
\begin{eqnarray*} \nonumber
\tilde{M}^{*k+1}
&=&\begin{bmatrix}
3a_k+p_1^k&3^{s}c_k\\
3b_k&3d_k
\end{bmatrix}\begin{bmatrix}
3a+p_1&3^{s}c\\
3b&3d
\end{bmatrix}\\ \nonumber
&=&\begin{bmatrix}
3(3^sbc_k+3a a_k+ap_1^k+a_kp_1)+p_1^{k+1}&
3^s(3ca_k+3dc_k+cp_1^k)\\
3(3ab_k+3bd_k+b_kp_1)&3(3^scb_k+3d d_k)
\end{bmatrix}\\ \nonumber
&:=&\begin{bmatrix}
3a_{k+1}+p_1^{k+1}&3^{s}c_{k+1}\\
3b_{k+1}&3d_{k+1}
\end{bmatrix}.
\end{eqnarray*}
It is easy to see that $a_{k+1},b_{k+1},d_{k+1}\in\mathbb{Z}$. Since $c,c_k\in(\mathbb{Z}\setminus3\mathbb{Z})\cup\{0\}$, $p_1\in\{1,2\}$ and $c_k=0$ if $c=0$, we conclude that $c_{k+1}\in(\mathbb{Z}\setminus3\mathbb{Z})\cup\{0\}$.  In particular, if $c=0$, we have  $c_{k+1}=0$ by $c_k=0$.
This means that the lemma  holds for $\ell=k+1$.  Hence the proof is completed.
\end{proof}

For the set $\mathfrak{M}_k$ given by \eqref{3.2}, we  denote
\begin{equation}\label{3.3}
\mathfrak{B}=\left\{\tilde{M}: \tilde{M}\in\mathfrak{M}_k,\ k\in\{2,7\}, \ s\geq\eta\right\}.
\end{equation}
It should be noted that the set $\mathfrak{B}$ contains the case $c=0$ in $\tilde{M}$. Under the assumption of $2 \sigma-\omega \in3\mathbb{Z}$, we will prove that $\mu_{\tilde M,\tilde D}$ is a  non-spectral measure for any $\tilde{M}\in\mathfrak{B}$.

\begin{lemma}\label{th(3.2)}
Let $\mathfrak{B}$ be given by  \eqref{3.3}. If $\tilde{M}\in\mathfrak{B}$ and $2 \sigma-\omega \in3\mathbb{Z}$ in $\tilde D$, then there exist at most finite mutually orthogonal exponential functions in $L^2(\mu_{\tilde M,\tilde D})$.
\end{lemma}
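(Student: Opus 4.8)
The plan is to apply Li's criterion for finite $\mu_{\tilde M,\tilde D}$-orthogonality (Theorem \ref{th(Li)}). Since $2\sigma-\omega\in3\mathbb{Z}$, Remark \ref{re(A)} guarantees that $\mathcal{Z}_{\tilde D}^2\subset\mathbb{Q}^2$ is a finite set, so Theorem \ref{th(Li)} is available and the assertion reduces to verifying
$$\tilde{M}^{*j}\mathcal{Z}_{\tilde D}^2\cap\mathbb{Z}^2=\emptyset \quad\text{for all } j\in\mathbb{N}.$$
Thus it suffices to show that no point of $\mathcal{Z}_{\tilde D}^2$ is carried into $\mathbb{Z}^2$ by any power of $\tilde{M}^*$.

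First I would record the shape of the candidate points. By Proposition \ref{pro(A)}(i), the hypothesis $2\sigma-\omega\in3\mathbb{Z}$ gives $\mathcal{Z}_{\tilde D}^2\subset\mathcal{Z}(m_{\tilde D})\subset\mathcal{H}$; hence every $x\in\mathcal{Z}_{\tilde D}^2$ has the form $x=\bigl(\frac{\ell_1}{3\gamma},\frac{\ell_2}{3^{\eta}\gamma}\bigr)^t$ with $\ell_1\in\mathbb{Z}\setminus3\mathbb{Z}$ and $\ell_2\in\mathbb{Z}$. The decisive structural input is the first row of $\tilde{M}^{*\ell}$. If $\tilde M\in\mathfrak{M}_2$, Lemma \ref{th(3.1)} already provides $\tilde{M}^{*\ell}=\begin{bmatrix}3a_\ell+p_1^\ell & 3^{s}c_\ell\\ 3b_\ell & 3d_\ell\end{bmatrix}$. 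If $\tilde M\in\mathfrak{M}_7$, then $\tilde{M}^*=\begin{bmatrix}3a+p_1 & 3^{s}c\\ 3b+p_2 & 3d\end{bmatrix}$, and writing the first row of $\tilde{M}^{*\ell}$ as $(A_\ell,B_\ell)$ one has the self-contained recursion $A_{\ell+1}=A_\ell(3a+p_1)+B_\ell(3b+p_2)$ and $B_{\ell+1}=3^{s}cA_\ell+3dB_\ell$. Since $3\mid B_\ell$ forces $A_{\ell+1}\equiv p_1A_\ell\pmod3$ and $3^{s}\mid B_\ell$ forces $3^{s}\mid B_{\ell+1}$, a straightforward induction (base case $A_1=3a+p_1$, $B_1=3^{s}c$) yields $A_\ell\equiv p_1^{\ell}\pmod3$ and $3^{s}\mid B_\ell$. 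Therefore in both cases the first row $(A_\ell,B_\ell)$ of $\tilde{M}^{*\ell}$ satisfies $A_\ell\equiv p_1^{\ell}\not\equiv0\pmod3$ and $3^{s}\mid B_\ell$, where $s\geq\eta$.

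It then remains to evaluate the first coordinate of $\tilde{M}^{*\ell}x$, namely
$$A_\ell\cdot\frac{\ell_1}{3\gamma}+B_\ell\cdot\frac{\ell_2}{3^{\eta}\gamma}=\frac{3^{\eta-1}A_\ell\ell_1+B_\ell\ell_2}{3^{\eta}\gamma}.$$
Here the $3$-adic bookkeeping is the crux: because $3\nmid A_\ell$ and $3\nmid\ell_1$, the term $3^{\eta-1}A_\ell\ell_1$ has $3$-adic valuation exactly $\eta-1$, whereas $3^{s}\mid B_\ell$ with $s\geq\eta$ gives $3^{\eta}\mid B_\ell\ell_2$; hence the numerator has $3$-adic valuation exactly $\eta-1<\eta$, so it is divisible neither by $3^{\eta}$ nor by $3^{\eta}\gamma$. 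Consequently the first coordinate of $\tilde{M}^{*\ell}x$ fails to be an integer, so $\tilde{M}^{*\ell}x\notin\mathbb{Z}^2$ for every $\ell\in\mathbb{N}$ and every $x\in\mathcal{Z}_{\tilde D}^2$. This establishes $\tilde{M}^{*j}\mathcal{Z}_{\tilde D}^2\cap\mathbb{Z}^2=\emptyset$ for all $j$, and Theorem \ref{th(Li)} finishes the proof.

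The main obstacle I anticipate is the $\mathfrak{M}_7$ case, where Lemma \ref{th(3.1)} does not apply verbatim; the remedy is to observe that the top row of $\tilde{M}^{*\ell}$ evolves by its own recursion, so only the two reductions ``top-left $\equiv p_1^{\ell}\pmod3$'' and ``top-right $\equiv0\pmod{3^{s}}$'' need to be propagated. Everything else is the valuation computation, which depends essentially on the two hypotheses built into $\mathfrak{B}$ and $\tilde D$: $s\geq\eta$ and $3\nmid\ell_1$, the latter being exactly what $2\sigma-\omega\in3\mathbb{Z}$ secures through Proposition \ref{pro(A)}.
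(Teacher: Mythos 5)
Your proof is correct, and its backbone coincides with the paper's: both arguments run through Li's criterion (Theorem \ref{th(Li)}), both use Proposition \ref{pro(A)}(i) (together with Remark \ref{re(A)}) to write every point of $\mathcal{Z}_{\tilde D}^2$ as $\bigl(\frac{\ell_1}{3\gamma},\frac{\ell_2}{3^{\eta}\gamma}\bigr)^t$ with $3\nmid\ell_1$, and both derive the contradiction from the same two facts — the top-left entry of $\tilde{M}^{*\ell}$ is $\equiv p_1^{\ell}\not\equiv 0\pmod 3$ while the top-right entry is divisible by $3^{s}$ with $s\geq\eta$; your ``valuation exactly $\eta-1$ versus at least $\eta$'' step is exactly the arithmetic the paper performs after clearing denominators in \eqref{3.5}. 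The genuine difference is the treatment of the subcase $\tilde{M}\in\mathfrak{B}\cap\mathfrak{M}_7$. The paper picks $\tau\in\{1,2\}$ with $p_2-\tau p_1\in 3\mathbb{Z}$, conjugates by $Q=\left[\begin{smallmatrix}1&\tau\\0&1\end{smallmatrix}\right]$, checks that $Q\tilde{M}Q^{-1}\in\mathfrak{B}\cap\mathfrak{M}_2$ and that $Q\tilde{D}$ still satisfies $2\sigma-(\omega+3^{\eta}\tau\vartheta)\in 3\mathbb{Z}$, and then quotes the $\mathfrak{M}_2$ case via the similarity-invariance Lemma \ref{th(2.1)}. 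You instead establish the required structure of $\tilde{M}^{*\ell}$ for $\mathfrak{M}_7$ directly, by induction on the first row $(A_\ell,B_\ell)$ — in effect extending Lemma \ref{th(3.1)} to $\mathfrak{M}_7$ — and your recursion and induction are sound, including the degenerate case $c=0$ (where $B_\ell=0$ for all $\ell$). The trade-off: your route is more self-contained and disposes of both subcases by one uniform computation, avoiding Lemma \ref{th(2.1)} and the verification that the transformed digit set inherits the hypothesis; the paper's route is shorter on the page because it recycles Lemma \ref{th(3.1)} and the already-settled $\mathfrak{M}_2$ case, and the same conjugation device is reused elsewhere in the paper (e.g.\ in Proposition \ref{th(4.6)}).
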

\begin{proof}
First, we  consider $\tilde{M}\in\mathfrak{B}\cap\mathfrak{M}_2$. We claim that $\tilde{M}^{*\ell}\mathcal{Z}_{\tilde D}^2\cap \mathbb{Z}^2=\emptyset$ for any $\ell\in \mathbb{N}$. If not,  there exists  $m\in \mathbb{N}$ such that $\tilde{M}^{*m}\mathcal{Z}_{\tilde D}^2\cap \mathbb{Z}^2\neq\emptyset$. Since $2 \sigma-\omega \in3\mathbb{Z}$, it follows from Proposition \ref{pro(A)} that
$$\mathcal{Z}_{\tilde D}^2\subset \mathcal{H}\cap [0, 1)^2=\left\{
\begin{pmatrix}
  \frac{\ell_{1}}{3\gamma} \\ \frac{\ell_{2}}{3^{\eta}\gamma}
  \end{pmatrix}
  :1\leq\ell_{1}\leq 3\gamma-1, 0\leq \ell_{2}\leq 3^{\eta}\gamma-1, \ell_{1}\in\mathbb{Z}\setminus3\mathbb{Z}
  \right\}.$$
Thus there exist $(\frac{\ell_{1}}{3\gamma}, \frac{\ell_{2}}{3^{\eta}\gamma})^t\in\mathcal{H}\cap [0, 1)^2$ and $(s_1,s_2)^t\in\mathbb{Z}^2$ such that
\begin{equation}\label{3.4}
\tilde{M}^{*m}\begin{pmatrix}
  \frac{\ell_{1}}{3\gamma} \\ \frac{\ell_{2}}{3^{\eta}\gamma}
  \end{pmatrix}=\begin{pmatrix}
  s_{1} \\ s_{2}
  \end{pmatrix}.
\end{equation}
By Lemma \ref{th(3.1)},
there exist $a_m,b_m,d_m\in\mathbb{Z}$ and $c_m\in(\mathbb{Z}\setminus3\mathbb{Z})\cup\{0\}$ such that
$$\tilde{M}^{*m}=\begin{bmatrix}
3a_m+p_1^m&3^{s}c_m\\
3b_m&3d_m
\end{bmatrix}.$$
Then multiplying both sides of equation \eqref{3.4} by $3^{\eta}\gamma$, we get
\begin{equation}\label{3.5}
 \ell_{1} p_1^m=3(\gamma s_{1}-a_m\ell_{1}-3^{s-\eta}c_m\ell_{2})\quad {\rm and} \quad
 d_m\ell_{2}=3^{\eta-1}(\gamma s_{2}-b_m\ell_{1}).
\end{equation}
Since $\ell_{1}\notin3\mathbb{Z}$ and $p_1\in\{1,2\}$, on can get $\ell_{1} p_1^m\notin3\mathbb{Z}$. However, it follows from $s\geq\eta$ that $3(\gamma s_{1}-a_m\ell_{1}-3^{s-\eta}c_m\ell_{2})\in3\mathbb{Z}$. Thus \eqref{3.5} does not hold, and hence  the claim follows.
According to the claim and Theorem \ref{th(Li)}, there exist at most finite mutually orthogonal exponential functions in $L^2(\mu_{\tilde{M},\tilde{D}})$.

Second, we  consider $\tilde{M}\in\mathfrak{B}\cap\mathfrak{M}_7$. It follows from \eqref{3.2} and \eqref{3.3} that
\begin{equation*}
\tilde{M}=\begin{bmatrix}
3a+p_1&3b+p_2\\
3^{s}c&3d
\end{bmatrix},
\end{equation*}
where $s\geq\eta$, $a,b,d\in\mathbb{Z}$,  $c\in(\mathbb{Z}\setminus3\mathbb{Z})\cup\{0\}$ and $p_1,p_2\in\{1,2\}$. It is clear that there exists $\tau\in\{1,2\}$ such that $p_2-\tau p_1\in3\mathbb{Z}$. Let $Q=\begin{bmatrix}
 1&\tau \\
 0&1
\end{bmatrix}$, then we have
\begin{eqnarray*}
\tilde{M}^\prime=Q\tilde{M}Q^{-1}
=\begin{bmatrix}
3(3^{s-1}\tau c+a)+p_1&3(-3^{s-1}c\tau^2 -(a-d)\tau +b)+p_2-\tau p_1\\
3^{s}c&3(d-3^{s-1}\tau c)
\end{bmatrix}
\end{eqnarray*}
and
\begin{eqnarray*} \tilde{D}^\prime=Q\tilde{D}=\begin{bmatrix}
1&\tau \\
 0&1
\end{bmatrix}\left\{\begin{pmatrix}
0\\0\end{pmatrix},\begin{pmatrix}
\sigma\\ 0
\end{pmatrix},
\begin{pmatrix}
\omega\\ 3^{\eta}\vartheta
\end{pmatrix}\right\}=\left\{\begin{pmatrix}
0\\0\end{pmatrix},\begin{pmatrix}
\sigma\\ 0
\end{pmatrix},
\begin{pmatrix}
\omega+3^{\eta}\tau\vartheta\\ 3^{\eta}\vartheta
\end{pmatrix}\right\}\subset\mathbb{Z}^2.
\end{eqnarray*}
Combining $p_1\in\{1,2\}$ with
$p_2-\tau p_1\in3\mathbb{Z}$, we can easily know that   $\tilde{M}^\prime\in\mathfrak{M}_2\cap\mathfrak{B}$. Moreover, it follows from $2 \sigma-\omega \in3\mathbb{Z}$ and $\eta\geq1$ that $2 \sigma-(\omega+3^{\eta}\tau\vartheta) \in3\mathbb{Z}$, thus $\tilde{D}^\prime$ has the same properties as $\tilde{D}$. Therefore, using the result of $\tilde{M}\in\mathfrak{B}\cap\mathfrak{M}_2$ and Lemma \ref{th(2.1)}, one can conclude that  there exist at most finite mutually orthogonal exponential functions in $L^2(\mu_{\tilde{M},\tilde{D}})$ for all $\tilde{M}\in\mathfrak{B}\cap\mathfrak{M}_7$.

Hence we complete the proof of Lemma \ref{th(3.2)}.
\end{proof}

Recall that $\mathcal{J}_i~(i\in\{0,1,2\})$ are given by \eqref{2.12}, and  let
\begin{equation}\label{3.6}
Q_n=\begin{bmatrix}
 1&0 \\
 0&\frac{1}{3^n}
\end{bmatrix} \quad {\rm for \ any} \ n\in\mathbb{N}.
\end{equation}
They will be used many times in the rest of this paper.

Now we begin to prove Theorem \ref{th(2.4)} in the case of  $2 \sigma-\omega \in3\mathbb{Z}$,
which is equivalent to proving Theorems \ref{th(3.3)} and  \ref{th(3.4)}.

\begin{thm} \label{th(3.3)}
If $2 \sigma-\omega \in3\mathbb{Z}$ in $\tilde D$, then $\mu_{\tilde M,\tilde D}$ is a spectral measure if and only if $L^2(\mu_{\tilde M,\tilde D})$ contains an infinite orthogonal set of exponential functions.
\end{thm}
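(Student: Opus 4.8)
The necessity is immediate and I would dispose of it first: if $\mu_{\tilde M,\tilde D}$ is spectral with spectrum $\Lambda$, then $E(\Lambda)$ is an orthonormal basis of $L^2(\mu_{\tilde M,\tilde D})$, which is infinite-dimensional since $\mu_{\tilde M,\tilde D}$ is a continuous (non-atomic) self-affine measure; hence $\Lambda$ is in particular an infinite orthogonal set of exponential functions. The entire weight of the proof therefore falls on the sufficiency.

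For the sufficiency the plan is to upgrade the existence of an infinite orthogonal set into the existence of a Hadamard triple, after which Theorem \ref{th(1.2)} together with the similarity-invariance of Lemma \ref{th(2.1)} yields spectrality. The first step is to invoke the contrapositive of Lemma \ref{th(3.2)}: since $L^2(\mu_{\tilde M,\tilde D})$ admits infinitely many mutually orthogonal exponentials, necessarily $\tilde M\notin\mathfrak{B}$, that is, $\tilde M$ is not of type $\mathfrak{M}_2$ or $\mathfrak{M}_7$ with $s\ge\eta$. Writing $\tilde M=M'+M_k$ as in \eqref{3.1}, I would then run through the remaining residue types and, for each, exhibit one of the sets $\mathcal{J}_0,\mathcal{J}_1,\mathcal{J}_2$ from \eqref{2.12} meeting the hypotheses of Lemma \ref{th(2.7)}. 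The difference condition $(\mathcal{J}_i-\mathcal{J}_i)\setminus\{0\}\subset\mathcal{Z}(m_{\tilde D})$ is granted by Remark \ref{re(A)} (using $\eta\ge 1$), while the integrality condition $\tilde M^*\mathcal{J}_i\subset\mathbb{Z}^2$ collapses to a congruence on the entries of $M_k$ modulo $3$: the set $\mathcal{J}_0$ serves every type whose first row vanishes mod $3$ ($M_1,M_4,M_5,M_8$), whereas for $M_6,M_9,M_{10}$ exactly one of $\mathcal{J}_1,\mathcal{J}_2$ succeeds, the choice being dictated by whether the two columns of $M_k$ are congruent or opposite mod $3$; the determinant constraint $p_1p_4-p_2p_3\in 3\mathbb{Z}$ is what guarantees that such a choice exists.

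The genuinely delicate part, which I expect to be the main obstacle, is the residual degenerate cases that no $\mathcal{J}_i$ resolves directly: type $\mathfrak{M}_3$, and the leftover pieces of $\mathfrak{M}_2,\mathfrak{M}_7$ with $s<\eta$. Here the idea is to change coordinates by the diagonal matrix $Q_n$ of \eqref{3.6} (first normalizing $\mathfrak{M}_7$ to $\mathfrak{M}_2$ by the unimodular conjugation $Q=\begin{bmatrix}1&\tau\\0&1\end{bmatrix}$ of Lemma \ref{th(3.2)}, which preserves $s$, $\eta$ and the shape \eqref{2.6}). A direct computation shows that $Q_n\tilde M Q_n^{-1}$ is again an expanding integer matrix whose residue type has been altered, while $Q_n\tilde D$ is again of the form \eqref{2.6} but with $\eta$ replaced by $\eta-n$. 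For $\mathfrak{M}_3$ one takes $n=\min\{s,\eta\}\ge 1$, after which the new first row is divisible by $3$ and $\mathcal{J}_0$ applies, legitimately even when $\eta-n=0$ by Remark \ref{re(B)}(i). For $\mathfrak{M}_2$ one takes $n=s$; the matrix then becomes a type requiring $\mathcal{J}_1$ or $\mathcal{J}_2$, and these remain available only if the new parameter $\eta-s$ is still positive—precisely the inequality $s<\eta$ that survives after excluding $\mathfrak{B}$. This interplay is the heart of the argument: the dichotomy $s\ge\eta$ versus $s<\eta$ is exactly what separates the non-spectral matrices of $\mathfrak{B}$ from the spectral ones, because passing to $\eta-n=0$ only makes $\mathcal{J}_0$ available (Remark \ref{re(B)}), never $\mathcal{J}_1$ or $\mathcal{J}_2$. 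Once a Hadamard triple has been produced in every case, possibly after similarity, the conclusion follows from Theorem \ref{th(1.2)} and Lemma \ref{th(2.1)}.
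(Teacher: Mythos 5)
Your proposal is correct and follows essentially the same route as the paper's proof: necessity from infinite-dimensionality, then the contrapositive of Lemma \ref{th(3.2)} to exclude $\mathfrak{B}$, Lemma \ref{th(2.7)} with $\mathcal{J}_0,\mathcal{J}_1,\mathcal{J}_2$ and Remarks \ref{re(A)}--\ref{re(B)} for the types in $\mathfrak{B}_1\setminus\mathfrak{M}_3$, and conjugation by $Q_n$ (reducing $\mathfrak{M}_3$ to a first-row-divisible-by-$3$ matrix and $\mathfrak{M}_2,\mathfrak{M}_7$ with $s<\eta$ to $\mathfrak{M}_6$) before concluding via Theorem \ref{th(1.2)} and Lemma \ref{th(2.1)}. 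One minor slip: since the relevant condition is $\tilde{M}^*\mathcal{J}_i\subset\mathbb{Z}^2$ and $\tilde{M}^*$ is the transpose, the choice between $\mathcal{J}_1$ and $\mathcal{J}_2$ for $M_6,M_9,M_{10}$ is dictated by whether the two \emph{rows} of $M_k$ are congruent or opposite mod $3$, not its columns.
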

\begin{proof}
According to the definition of spectral measure, the  necessity is obvious. We now prove the sufficiency. Suppose that $L^2(\mu_{\tilde M,\tilde D})$ contains an infinite orthogonal set of exponential functions, then Lemma \ref{th(3.2)} and \eqref{3.2} imply that  $\tilde{M}\in\mathfrak{B}_1\cup\mathfrak{B}_2$,  where
\begin{equation}\label{3.7}
\mathfrak{B}_1=\left\{\tilde{M}: \tilde{M}\in\mathfrak{M}_k,\ k\in\{1,3,4,5,6,8,9,10\}\right\},
\end{equation}
and
\begin{equation}\label{3.8}
\mathfrak{B}_2=\left\{\tilde{M}: \tilde{M}\in\mathfrak{M}_k,\ k\in\{2,7\}, \ s<\eta\right\}.
\end{equation}
The proof will be divided into the following three cases.

\textbf{Case 1:} $\tilde{M}\in\mathfrak{B}_1\setminus\mathfrak{M}_3$. We first claim that $\tilde{M}^*\mathcal{J}_\kappa\subset \mathbb{Z}^2$ for some $\kappa\in\{0,1,2\}$.
If $\tilde{M}\in\{\tilde{M}: \tilde{M}\in\mathfrak{M}_k,k\in\{1,4,5,8\}\}$,
 it follows from \eqref{3.2} that $\tilde{M}$
can be written as
 $$ \tilde{M}=\begin{bmatrix}
3a&3b\\
c'&d'
\end{bmatrix},$$
where $a,b,c',d'\in \mathbb{Z}$.
By a simple calculation, we obtain that
\begin{equation}\label{3.9}
\tilde{M}^*\mathcal{J}_0=\left\{\begin{pmatrix}
0\\0\end{pmatrix},\begin{pmatrix}
a\\ b
\end{pmatrix},
\begin{pmatrix}
2a\\ 2b
\end{pmatrix}\right\}\subset \mathbb{Z}^2.
\end{equation}
If $\tilde{M}\in\mathfrak{M}_6$,
we denote
$$\tilde{M}=\tilde{M}_{6,1} \quad {\rm if} \ p_1=p_3 \quad {\rm and} \quad  \tilde{M}=\tilde{M}_{6,2} \quad {\rm if} \ p_1\neq p_3.$$
If $\tilde{M}\in\mathfrak{M}_9$,
we denote
$$\tilde{M}=\tilde{M}_{9,1} \quad {\rm if} \ p_2=p_4 \quad {\rm and} \quad \tilde{M}=\tilde{M}_{9,2} \quad {\rm if} \ p_2\neq p_4.
$$
Then by using \eqref{2.12} and \eqref{3.2}, it is easy to verify that
\begin{equation}\label{3.10}
 \tilde{M}_{k,i}^*\mathcal{J}_{i}\subset \mathbb{Z}^2\quad {\rm for} \
 k\in\{6,9\}\ {\rm and} \ i\in\{1,2\}.
\end{equation}
If $\tilde{M}\in\mathfrak{M}_{10}$,
we denote
$$\tilde{M}=\tilde{M}_{10,1} \quad {\rm if} \ p_1=p_3 \quad {\rm and} \quad  \tilde{M}=\tilde{M}_{10,2} \quad {\rm if} \ p_1\neq p_3.$$
Note that  $p_1,p_2,p_3,p_4\in\{1,2\}$ and  $p_1p_4-p_2p_3\in 3\mathbb{Z}$,
then  $p_2=p_4$ if $p_1=p_3$, and  $p_2\neq p_4$ if $p_1\neq p_3$.
Hence, it  follows from \eqref{2.12} and \eqref{3.2} that
$\tilde{M}_{10,i}^*\mathcal{J}_{i}\subset \mathbb{Z}^2$ for $i\in\{1,2\}$.
Combining this with \eqref{3.9} and \eqref{3.10}, the claim follows.

Since $2 \sigma-\omega \in3\mathbb{Z}$, it follows from Remark \ref{re(A)} that
$\mathcal{J}_{i}\setminus\{0\},(\mathcal{J}_{i}-\mathcal{J}_{i})\setminus\{0\}\subset\mathcal{Z}(m_{\tilde
D})$ for $i\in\{0,1,2\}$. Together  with the claim and                                                                                                                                                                                                                                                                                                                               Lemma \ref{th(2.7)}, it shows that $(\tilde{M},\tilde{D})$ is admissible. Therefore, $\mu_{\tilde{M},\tilde{D}}$ is a spectral measure by Theorem \ref{th(1.2)}.

\textbf{Case 2:} $\tilde{M}\in\mathfrak{M}_3$. Applying \eqref{3.2} and \eqref{3.6}, we have
\begin{equation*}
A_1=Q_1\tilde{M}Q_1^{-1}
=\begin{bmatrix}
3a&9b+3p_2\\
3^{s-1}c&3d
\end{bmatrix}\quad {\rm and} \quad D_1=Q_1\tilde{D}=\left\{\begin{pmatrix}
0\\0\end{pmatrix},\begin{pmatrix}
\sigma\\ 0
\end{pmatrix},
\begin{pmatrix}
\omega\\ 3^{\eta-1}\vartheta
\end{pmatrix}\right\},
\end{equation*}
where $s\geq1$, $a,b,d\in \mathbb{Z}$,  $c\in(\mathbb{Z}\setminus3\mathbb{Z})\cup\{0\}$ and $p_2\in\{1,2\}$.
It follows from $\eta\geq 1$ that $\eta-1\geq 0$. Note that $2 \sigma-\omega \in3\mathbb{Z}$, then Remarks \ref{re(A)} and \ref{re(B)} give that $\mathcal{J}_{0}\setminus\{0\},(\mathcal{J}_{0}-\mathcal{J}_{0})\setminus\{0\}\subset\mathcal{Z}(m_{ D_1})$. Moreover, it is a direct check to see
$A_1^*\mathcal{J}_0\subset \mathbb{Z}^2$.
In view of Lemma \ref{th(2.7)}, $(A_1,D_1)$ is admissible. Hence $\mu_{\tilde{M},\tilde{D}}$ is a spectral measure by Theorem \ref{th(1.2)}  and Lemma \ref{th(2.1)}.

\textbf{Case 3:} $\tilde{M}\in\mathfrak{B}_2$. Form \eqref{3.2}, we can write $\tilde{M}\in\mathfrak{B}_2$ as
$$ \tilde{M}=\begin{bmatrix}
3a+p_1&b'\\
3^{s}c&3d
\end{bmatrix},$$
where $1\leq s<\eta$, $a,b',d\in \mathbb{Z}$, $c\in\mathbb{Z}\setminus3\mathbb{Z}$ and $p_1\in\{1,2\}$.
Applying \eqref{3.6}, we have
\begin{equation*}
A_2=Q_s\tilde{M}Q_s^{-1}
=\begin{bmatrix}
3a+p_1&3^{s}b'\\
c&3d
\end{bmatrix}\quad {\rm and} \quad D_2=Q_s\tilde{D}=\left\{\begin{pmatrix}
0\\0\end{pmatrix},\begin{pmatrix}
\sigma\\ 0
\end{pmatrix},
\begin{pmatrix}
\omega\\ 3^{\eta-s}\vartheta
\end{pmatrix}\right\},
\end{equation*}
As $c\notin 3\mathbb{Z}$ and $\eta-s\geq 1$, one can easily know that $A_2\in \mathfrak{M}_6$ and $D_2$ has the same form as $\tilde{D}$.
By Case 1, we conclude that  $(A_2,D_2)$ is admissible and $\mu_{A_2,D_2}$ is a spectral measure. Therefore,
$\mu_{\tilde{M},\tilde{D}}$ is  a spectral measure by Lemma \ref{th(2.1)}.

Finally,  according to the results of the above three cases, we know that $\mu_{\tilde M,\tilde D}$ is a spectral measure. Hence
the sufficiency follows. This ends the proof of Theorem \ref{th(3.3)}.
\end{proof}

\begin{thm} \label{th(3.4)}
If $2\sigma-\omega \in3\mathbb{Z}$ in $\tilde D$, then $\mu_{\tilde M,\tilde D}$ is a spectral measure if and only if there exists a matrix  $Q\in M_2(\mathbb{Z})$  such that  $(\bar{M},\bar{D})$ is admissible, where $\bar{M}=Q\tilde{M}Q^{-1}$ and $\bar{D}=Q\tilde{D}$.
\end{thm}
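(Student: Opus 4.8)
The plan is to obtain Theorem~\ref{th(3.4)} directly from Theorem~\ref{th(3.3)} together with the constructions already carried out in its proof; jointly the two theorems then yield Theorem~\ref{th(2.4)} when $2\sigma-\omega\in3\mathbb{Z}$. The sufficiency is the short half: given a conjugating matrix $Q$ for which $(\bar M,\bar D)$ is admissible, Theorem~\ref{th(1.2)} makes $\mu_{\bar M,\bar D}$ a spectral measure, and since $\bar M=Q\tilde MQ^{-1}$ and $\bar D=Q\tilde D$ are related to $\tilde M,\tilde D$ by the similarity $Q$, Lemma~\ref{th(2.1)} transports spectrality back to $\mu_{\tilde M,\tilde D}$.

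For the necessity I would start from spectrality of $\mu_{\tilde M,\tilde D}$ and invoke Theorem~\ref{th(3.3)} to produce an infinite orthogonal set of exponentials in $L^2(\mu_{\tilde M,\tilde D})$. The contrapositive of Lemma~\ref{th(3.2)} then gives $\tilde M\notin\mathfrak{B}$, and the enumeration \eqref{3.2} of the residue types forces $\tilde M\in\mathfrak{B}_1\cup\mathfrak{B}_2$ --- precisely the three cases analysed in the sufficiency part of Theorem~\ref{th(3.3)}. It remains to exhibit, in each case, the conjugating matrix $Q$ that makes $(\bar M,\bar D)$ admissible. When $\tilde M\in\mathfrak{B}_1\setminus\mathfrak{M}_3$ I would take $Q$ to be the identity, because the computations \eqref{3.9}--\eqref{3.10} together with Remark~\ref{re(A)} and Lemma~\ref{th(2.7)} already show that $(\tilde M,\tilde D)$ is admissible. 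When $\tilde M\in\mathfrak{M}_3$ I would take $Q=Q_1$ from \eqref{3.6}: the factor $3^{\eta}$ in the second coordinate of $\tilde D$ keeps $\bar D=Q_1\tilde D$ integral, $\bar M=Q_1\tilde MQ_1^{-1}=A_1$ is an integer matrix, and checking $A_1^*\mathcal{J}_0\subset\mathbb{Z}^2$ with $(\mathcal{J}_0-\mathcal{J}_0)\setminus\{0\}\subset\mathcal{Z}(m_{\bar D})$ (Remarks~\ref{re(A)}, \ref{re(B)}) yields admissibility through Lemma~\ref{th(2.7)}. When $\tilde M\in\mathfrak{B}_2$ I would take $Q=Q_s$, which sends $\tilde M$ to $A_2\in\mathfrak{M}_6\subset\mathfrak{B}_1\setminus\mathfrak{M}_3$ and $\tilde D$ to a digit set $D_2$ of the same canonical shape, thereby reducing admissibility of $(\bar M,\bar D)$ to the first case.

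The step I expect to be the main obstacle is the manufacture of $Q$ in the last two cases. The identity does not work there: the off-diagonal entry $p_2\in\{1,2\}$ of $\tilde M$ destroys the integrality of $\tilde M^*\mathcal{J}_i$ for every candidate $\mathcal{J}_i$, so a nontrivial conjugation is genuinely needed (this is the source of the nontriviality of $Q$ advertised after Theorem~\ref{th(1.4)}). The diagonal matrices $Q_n$ of \eqref{3.6} are designed to strip $n$ powers of $3$ from the second coordinate of $\tilde D$ while simultaneously conjugating $\tilde M$ back to an expanding integer matrix of a tractable residue type; the delicate points are to confirm that $\bar M$ stays integral and expanding and that $\bar D$ retains the normal form $\{(0,0)^{t},(\sigma,0)^{t},(\omega,3^{\eta'}\vartheta)^{t}\}$ with $2\sigma-\omega\in3\mathbb{Z}$, so that Remarks~\ref{re(A)}--\ref{re(B)} and Lemma~\ref{th(2.7)} remain applicable to $(\bar M,\bar D)$. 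Once these verifications are assembled across all three cases, the required $Q$ exists, which finishes the necessity and with it Theorem~\ref{th(3.4)}.
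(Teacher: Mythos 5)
Your proposal is correct and follows essentially the same route as the paper: sufficiency via Theorem~\ref{th(1.2)} and Lemma~\ref{th(2.1)}, and necessity by using Lemma~\ref{th(3.2)} to force $\tilde M\in\mathfrak{B}_1\cup\mathfrak{B}_2$ and then extracting the explicit conjugators $Q=I$, $Q=Q_1$, $Q=Q_s$ from the three cases in the proof of Theorem~\ref{th(3.3)}. The only cosmetic difference is that you pass through Theorem~\ref{th(3.3)} to obtain the infinite orthogonal set before applying the contrapositive of Lemma~\ref{th(3.2)}, whereas the paper applies Lemma~\ref{th(3.2)} directly; these are logically the same step.
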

\begin{proof}
The sufficiency follows directly from Theorem \ref{th(1.2)} and Lemma \ref{th(2.1)}.
Now we are devoted to proving the necessity. Suppose that $\mu_{\tilde M,\tilde D}$ is a spectral measure, applying Lemma \ref{th(3.2)}, we obtain $\tilde{M}\in\mathfrak{B}_1\cup\mathfrak{B}_2$,  where $\mathfrak{B}_1$ and $\mathfrak{B}_2$ are given by \eqref{3.7} and \eqref{3.8} respectively.  From the proof of  Theorem \ref{th(3.3)}, we have that $(\bar{M},\bar{D})$ is admissible, where
$$\bar{M}=\tilde{M} \quad  {\rm and} \quad \bar{D}=\tilde{D} \quad  {\rm if}\ \tilde{M}\in \mathfrak{B}_1\setminus\mathfrak{M}_3,$$
$$\bar{M}=Q_1\tilde{M}Q_1^{-1} \quad  {\rm and} \quad \bar{D}=Q_1\tilde{D} \quad  {\rm if}\ \tilde{M}\in\mathfrak{M}_3,$$
$$\bar{M}=Q_s\tilde{M}Q_s^{-1} \quad  {\rm and} \quad \bar{D}=Q_s\tilde{D} \quad  {\rm if}\ \tilde{M}\in\mathfrak{B}_2.$$
This proves the necessity, and hence
the proof is completed.
\end{proof}

\section{\bf Proof of Theorem \ref{th(1.4)} in Case II\label{sect.4}}

In this section, we are devoted to proving Theorem \ref{th(1.4)}  in Case II, which is equivalent to proving Theorem \ref{th(2.4)} in the case of  $2 \sigma-\omega \notin3\mathbb{Z}$.
Before starting to prove the theorem, we first characterize the spectral properties of  some special measures $\mu_{\tilde{M},\tilde{D}}$.
Recall  that $\mathfrak{M}_k$ is defined by \eqref{3.2}, where $k\in\{1,2,\ldots,10\}$.

\begin{pro}\label{th(4.1)}
If $\tilde{M}\in\mathfrak{M}_4$ and $2 \sigma-\omega \notin3\mathbb{Z}$ in $\tilde D$, then $\mu_{\tilde{M},\tilde{D}}$ is  a non-spectral measure.
\end{pro}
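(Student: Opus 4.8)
If $\tilde{M}\in\mathfrak{M}_4$ and $2\sigma-\omega\notin 3\mathbb{Z}$ in $\tilde D$, then $\mu_{\tilde M,\tilde D}$ is a non-spectral measure.

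Let me understand the setup. We have $\tilde M \in \mathfrak{M}_4$, which by \eqref{3.1}-\eqref{3.2} means
$$\tilde M = 3\begin{bmatrix} a & b \\ 3^{s-1}c & d\end{bmatrix} + M_4 = \begin{bmatrix} 3a & 3b \\ 3^s c + p_3 & 3d\end{bmatrix},$$
where $p_3 \in \{1,2\}$, so $p_3 \notin 3\mathbb{Z}$. And $\tilde D = \{(0,0)^t, (\sigma,0)^t, (\omega, 3^\eta\vartheta)^t\}$ with $2\sigma-\omega\notin 3\mathbb{Z}$. I also have $\det(\tilde M)\in 3\mathbb{Z}$ by construction.

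The key observation is the structure of $\tilde M$: the first column is $(3a, 3^sc+p_3)^t$ and the second column is $(3b, 3d)^t$. So the top row of $\tilde M$ is entirely divisible by $3$, while the bottom-left entry $3^sc+p_3$ is NOT divisible by $3$ (since $p_3\notin 3\mathbb{Z}$). This is the opposite of the $\mathfrak{M}_2, \mathfrak{M}_7$ situation handled in Lemma~\ref{th(3.2)}, and it suggests that the non-spectrality will come from a factorization of the measure via Lemma~\ref{lem(DHL)}.

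The strategy is as follows. First, I would examine the zero set $\mathcal{Z}(m_{\tilde D})$. Since $2\sigma-\omega\notin 3\mathbb{Z}$, Proposition~\ref{pro(A)}(ii) gives $\mathcal{G}_i\subset\mathcal{Z}(m_{\tilde D})\subset\mathcal{G}$ for $i=1$ or $2$, so every element of $\mathcal{Z}(m_{\tilde D})$ has first coordinate of the form $\frac{\ell_1}{3\gamma}$ with $\ell_1\notin 3\mathbb{Z}$. Now apply the action of $\tilde M^*$ to the zero set, as in \eqref{2.4}; because $\tilde M^*$ has the transpose structure with top row $(3a,\, 3^sc+p_3)$, I would analyze what $\mathcal{Z}(\hat\mu_{\tilde M,\tilde D}) = \bigcup_j \tilde M^{*j}\mathcal{Z}(m_{\tilde D})$ looks like. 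The plan is to identify a $\delta$-factor: I expect to write $\mu_{\tilde M,\tilde D} = \nu_1 * \nu_2$ as a convolution of two non-Dirac probability measures, where $\nu_1$ is a self-affine (or discrete) measure whose bi-zero set already exhausts all the orthogonality available to $\mu_{\tilde M,\tilde D}$. Concretely, one looks for a lattice direction preserved by the dynamics along which the digits of $\tilde D$ collapse modulo the action, producing a genuine convolution factor.

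Then I would invoke Lemma~\ref{lem(DHL)}: if $\Lambda$ is any bi-zero set (in particular any candidate spectrum) of $\mu_{\tilde M,\tilde D}=\nu_1*\nu_2$, and $\Lambda$ is forced to be a bi-zero set of $\nu_1$ alone, then $\Lambda$ cannot be a spectrum of the full convolution. The heart of the argument is therefore to show that \emph{every} bi-zero set $\Lambda$ of $\mu_{\tilde M,\tilde D}$ is in fact a bi-zero set of one proper factor $\nu_1$ — equivalently, that the second coordinate of the spectrum is rigidly constrained. I expect this to follow from a residue/divisibility analysis modulo $3$: because the top row of $\tilde M^*$ is divisible by $3$ whereas the relevant coordinate of $\mathcal{Z}(m_{\tilde D})$ carries a $\frac{1}{3\gamma}$ with numerator coprime to $3$, iterating $\tilde M^{*j}$ can never move that coordinate into the integer lattice in the first component while simultaneously matching the second-component constraints coming from $2\sigma-\omega\notin 3\mathbb{Z}$.

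The main obstacle, and where the real work lies, is exhibiting the explicit convolution factorization $\mu_{\tilde M,\tilde D}=\nu_1*\nu_2$ and verifying that the constraint $2\sigma-\omega\notin 3\mathbb{Z}$ combined with $p_3\notin 3\mathbb{Z}$ forces the degeneracy. I would proceed by a congruence computation: track the first and second coordinates of $\tilde M^{*j}x$ modulo powers of $3$ for $x\in\mathcal{Z}(m_{\tilde D})$, using the fact (analogous to Lemma~\ref{th(3.1)}) that the $(1,1)$ and $(1,2)$ entries of $\tilde M^{*j}$ remain in $3\mathbb{Z}$ for all $j$ while the $(2,1)$ entry stays coprime to $3$. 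This periodic structure should pin down that any orthogonal exponential family splits off a one-dimensional non-spectral Cantor-type factor, at which point Lemma~\ref{lem(DHL)} delivers the non-spectrality immediately.
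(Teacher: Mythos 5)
Your overall skeleton --- factor $\mu_{\tilde{M},\tilde{D}}$ as a convolution of two non-Dirac measures, show that every bi-zero set of $\mu_{\tilde{M},\tilde{D}}$ is already a bi-zero set of one factor, and invoke Lemma~\ref{lem(DHL)} --- is exactly the strategy of the paper. But the two pieces of content that turn this skeleton into a proof are missing or wrong in your write-up. First, you never exhibit the factorization, and there is no ``lattice direction along which the digits collapse'' here, nor any one-dimensional Cantor-type factor. The paper's factors are simply the levels of the infinite convolution defining the self-affine measure: setting $\mu_1=\delta_{\tilde{M}^{-1}\tilde{D}}*\delta_{\tilde{M}^{-3}\tilde{D}}*\cdots$ (level $2$ omitted) and $\mu_{2}=\delta_{\tilde{M}^{-2}\tilde{D}}*\delta_{\tilde{M}^{-3}\tilde{D}}*\cdots$ (level $1$ omitted), one has $\mu_{\tilde{M},\tilde{D}}=\mu_1*\delta_{\tilde{M}^{-2}\tilde{D}}=\mu_{2}*\delta_{\tilde{M}^{-1}\tilde{D}}$, and the entire proof consists in showing that for any bi-zero set $\Lambda$, the set $(\Lambda-\Lambda)\setminus\{0\}$ lies entirely in $\mathcal{Z}(\hat{\mu}_1)$ or entirely in $\mathcal{Z}(\hat{\mu}_2)$, i.e.\ orthogonality never uses levels $1$ and $2$ simultaneously.

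Second, your mod-$3$ analysis of the powers $\tilde{M}^{*j}$ is incorrect, and correcting it is precisely what drives that dichotomy. For $\tilde{M}\in\mathfrak{M}_4$ one has $\tilde{M}^{*}=\begin{bmatrix}3a&3^{s}c+p_3\\3b&3d\end{bmatrix}$, so the entry coprime to $3$ sits in position $(1,2)$, and --- the crucial point --- this structure is \emph{not} stable under powers: a direct computation gives $\tilde{M}^{*2}\in M_2(3\mathbb{Z})$, hence $\tilde{M}^{*j}3^{\eta}\gamma\,\mathcal{Z}(m_{\tilde{D}})\subset\mathbb{Z}^2$ for every $j\geq2$, whereas $\tilde{M}^{*}3^{\eta}\gamma\,\mathcal{Z}(m_{\tilde{D}})\cap\mathbb{Z}^2=\emptyset$ (this last fact uses Proposition~\ref{pro(A)}(ii), i.e.\ $\ell_2\notin3\mathbb{Z}$, together with $p_3\notin3\mathbb{Z}$ and $s\geq1$). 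Your claimed picture --- designated entries of $\tilde{M}^{*j}$ staying coprime to $3$ for all $j$, so that no iterate ever reaches the integer lattice --- is the $\mathfrak{M}_2$/$\mathfrak{M}_6$ situation (Lemma~\ref{th(3.1)} and Lemma~\ref{th(4.3)}), where one concludes \emph{finiteness} of orthogonal exponentials via Theorem~\ref{th(Li)}; that route is unavailable for $\mathfrak{M}_4$, since for suitable $\tilde{M}\in\mathfrak{M}_4$ (e.g.\ $\tilde{M}=\begin{bmatrix}3&0\\1&6\end{bmatrix}$ with $\gamma=\eta=1$) the levels $j\geq2$ do produce integer points, so $L^2(\mu_{\tilde{M},\tilde{D}})$ can contain infinite orthogonal sets. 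The actual argument must use the level-$1$ versus level-$\geq2$ dichotomy to rule out mixing: if $\lambda_1\in\tilde{M}^{*2}\mathcal{Z}(m_{\tilde{D}})$ and $\lambda_2\in\tilde{M}^{*}\mathcal{Z}(m_{\tilde{D}})$ both lay in $\Lambda$, then $\lambda_1-\lambda_2=\tilde{M}^{*\kappa}\xi_3$ forces $\kappa=1$, and then one checks $\tilde{M}^{*}3^{\eta}\gamma(\xi_2+\xi_3)\notin\mathbb{Z}^2$ (because $\ell_2+\ell_4\notin3\mathbb{Z}$) while $\tilde{M}^{*2}3^{\eta}\gamma\xi_1\in\mathbb{Z}^2$, a contradiction; the same scheme handles $\Lambda-\Lambda$. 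None of this appears in your proposal, so as written it is a plan whose decisive steps are unproved, and the congruence facts you do assert would steer the argument in the wrong direction.
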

\begin{proof}
For simplicity, we denote $\mathcal {Z}_j:={\tilde{M}}^{*j}\mathcal {Z}(m_{\tilde{D}})$.  Let
\begin{equation}\label{4.1}
\mu_1=\delta_{\tilde{M}^{-1}\tilde{D}}*\delta_{\tilde{M}^{-3}\tilde{D}}*\cdots  \quad {\rm and} \quad
\mu_{2}=\delta_{\tilde{M}^{-2}\tilde{D}}*\delta_{\tilde{M}^{-3}\tilde{D}}*\cdots.
\end{equation}
Then $\mu_{\tilde{M},\tilde{D}}=\mu_1*\delta_{\tilde{M}^{-2}\tilde{D}}=\mu_{2}*\delta_{\tilde{M}^{-1}\tilde{D}}$. It follows from \eqref{2.4} that
\begin{equation}\label{4.2}
\mathcal{Z}(\hat{\mu}_1)=\bigcup_{j\geq 1, j\neq 2}\mathcal {Z}_j  \quad {\rm and} \quad
\mathcal{Z}(\hat{\mu}_2)=\bigcup_{j\geq 2}\mathcal {Z}_j.
\end{equation}
Let $\Lambda$ be  a bi-zero set of $\mu_{\tilde{M},\tilde{D}}$
with $0\in \Lambda$.

We first claim that $\Lambda\setminus\{0\} \subset \mathcal{Z}(\hat{\mu}_1)$ or $\Lambda\setminus\{0\} \subset \mathcal{Z}(\hat{\mu}_2)$. Otherwise, there exist
$\lambda_1,\lambda_2 \in \Lambda\setminus\{0\}$ such that $\lambda_1\in \mathcal {Z}_{2}$ and $\lambda_2\in \mathcal {Z}_{1}$.
According to the orthogonality of $\Lambda$, we have
\begin{equation}\label{4.3}
\lambda_1-\lambda_2={\tilde{M}}^{*2}\xi_1-{\tilde{M}}^{*}\xi_2={\tilde{M}}^{*\kappa}\xi_3
\end{equation}
for some $\xi_1,\xi_2,\xi_3\in \mathcal {Z}(m_{\tilde{D}})$ and $\kappa\geq 1$.
Multiplying both sides of \eqref{4.3} by $3^\eta\gamma$, we get
\begin{equation}\label{4.4}
{\tilde{M}}^{*2}3^\eta\gamma\xi_1-{\tilde{M}}^{*}3^\eta\gamma\xi_2={\tilde{M}}^{*\kappa}3^\eta\gamma\xi_3.
\end{equation}
Since $\tilde{M}\in\mathfrak{M}_4$, it follows from  \eqref{3.2} that
$${\tilde{M}}^{*}=\begin{bmatrix}
3a&3^{s}c+p_3\\
3b&3d
\end{bmatrix} \quad {\rm and} \quad  {\tilde{M}}^{*2}=3\begin{bmatrix}
3a^2+3^{s}bc+bp_3&(a+d)(3^{s}c+p_3)\\
3b(a+d)&3^{s}bc+3d^2+bp_3
\end{bmatrix}\in M_2(3\mathbb{Z}),$$
where $s\geq1$, $a,b,d\in\mathbb{Z}$,  $c\in(\mathbb{Z}\setminus3\mathbb{Z})\cup\{0\}$ and $p_3\in\{1,2\}$.
Applying $2 \sigma-\omega \notin3\mathbb{Z}$ and Proposition \ref{pro(A)}, we conclude that
\begin{equation}\label{4.5}
{\tilde{M}}^{*}3^\eta\gamma\mathcal {Z}(m_{\tilde{D}})\cap \mathbb{Z}^2=\emptyset  \quad {\rm and} \quad {\tilde{M}}^{*j}3^\eta\gamma\mathcal {Z}(m_{\tilde{D}})\subset \mathbb{Z}^2\ {\rm for \ j\geq 2}.
\end{equation}
This and \eqref{4.4} give $\kappa=1$.  Hence \eqref{4.4} becomes
\begin{equation}\label{4.6}
{\tilde{M}}^{*}3^\eta\gamma(\xi_2+\xi_3)={\tilde{M}}^{*2}3^\eta\gamma\xi_1.
\end{equation}
If $3^\eta\gamma(\xi_2+\xi_3)\in\mathbb{Z}^2$, then \eqref{4.6} implies ${\tilde{M}}^{*}3^\eta\gamma\xi_1\in\mathbb{Z}^2$, which contradicts to \eqref{4.5}.  If $3^\eta\gamma(\xi_2+\xi_3)\notin\mathbb{Z}^2$, it follows from $2 \sigma-\omega \notin3\mathbb{Z}$ and Proposition \ref{pro(A)} that there exist $\ell_{1},\ell_{2},\ell_{3},\ell_{4}\notin3\mathbb{Z}$
and $\ell_{2}+\ell_{4}\notin3\mathbb{Z}$ such that
$\xi_2=( \frac{\ell_{1}}{3\gamma}, \frac{\ell_2}{3^{\eta+1}\gamma})^t$ and $\xi_3=( \frac{\ell_{3}}{3\gamma}, \frac{\ell_4}{3^{\eta+1}\gamma})^t$. Consequently,
$${\tilde{M}}^{*}3^\eta\gamma(\xi_2+\xi_3)=\begin{pmatrix}
  3^\eta a(\ell_{1}+\ell_{3})+3^{s-1} c(\ell_{2}+\ell_{4})+\frac{p_3(\ell_{2}+\ell_{4})}{3} \\ 3^\eta b(\ell_{1}+\ell_{3})+d (\ell_{2}+\ell_{4})
  \end{pmatrix}\notin\mathbb{Z}^2$$
since $s\geq1$ and $p_3,\ell_{2}+\ell_{4}\notin3\mathbb{Z}$.
Together with \eqref{4.5}, it implies that the left-hand side of \eqref{4.6} is not in $\mathbb{Z}^{2}$, while the right-hand side is in  $\mathbb{Z}^{2}$. This is a contradiction, and hence the claim follows.

Next, we prove that $(\Lambda-\Lambda)\setminus\{0\}\subset \mathcal{Z}(\hat{\mu}_1)$ or $(\Lambda-\Lambda)\setminus\{0\}\subset \mathcal{Z}(\hat{\mu}_2)$. According to the claim, we decompose the proof into the following two cases.

If  $\Lambda\setminus\{0\} \subset \mathcal{Z}(\hat{\mu}_1)$, we have $(\Lambda-\Lambda)\setminus\{0\}\subset \mathcal{Z}(\hat{\mu}_1)$. Suppose,
on the contrary, that there exist $\lambda_1, \lambda_2 \in \Lambda\setminus\{0\}$ such  that
\begin{equation}\label{4.7}
\lambda_1-\lambda_2={\tilde{M}}^{*j_1}\xi_1-{\tilde{M}}^{*j_2}\xi_2={\tilde{M}}^{*2}\xi_3,
\end{equation}
where $\xi_1,\xi_2,\xi_3\in \mathcal {Z}(m_{\tilde{D}})$ and $j_1,j_2\in\mathbb{N}\setminus\{2\}$. Similar to \eqref{4.6}, it is  easy to verify that $j_1\neq j_2$. Without loss of generality, we assume $j_1> j_2$. If $j_2=1$, using \eqref{4.5} and $j_1>2$, we have ${\tilde{M}}^{*j_1}3^\eta\gamma\xi_1,{\tilde{M}}^{*2}3^\eta\gamma\xi_3\in\mathbb{Z}^2$ and ${\tilde{M}}^{*}3^\eta\gamma\xi_2\notin\mathbb{Z}^2$. Thus  \eqref{4.7} does not hold. If $j_2>1$, then $j_1>j_2>2$. Multiplying both sides of \eqref{4.7} by ${\tilde{M}}^{*-1}3^\eta\gamma$, we get \begin{equation*}
{\tilde{M}}^{*j_1-1}3^\eta\gamma\xi_1-{\tilde{M}}^{*j_2-1}3^\eta\gamma\xi_2={\tilde{M}}^{*}3^\eta\gamma\xi_3.
\end{equation*}
In view of \eqref{4.5}, the above equation does not hold. Hence $(\Lambda-\Lambda)\setminus\{0\}\subset \mathcal{Z}(\hat{\mu}_1)$.

If $\Lambda\setminus\{0\} \subset \mathcal{Z}(\hat{\mu}_2)$, we have
$(\Lambda-\Lambda)\setminus\{0\}\subset \mathcal{Z}(\hat{\mu}_2)$. The proof is similar to that of $\Lambda\setminus\{0\} \subset \mathcal{Z}(\hat{\mu}_1)$, so we omit it here.

Finally, we show that $\Lambda$ cannot be a spectrum of $\mu_{\tilde{M},\tilde{D}}$. Since $(\Lambda-\Lambda)\setminus\{0\}\subset \mathcal{Z}(\hat{\mu}_1)$ or $(\Lambda-\Lambda)\setminus\{0\}\subset \mathcal{Z}(\hat{\mu}_2)$, it follows from \eqref{2.5} that $\Lambda$ is   a bi-zero set of  $\mu_1$ or  $\mu_2$. Therefore, Lemma \ref{lem(DHL)} means that
$\Lambda$ cannot be a spectrum of $\mu_{\tilde{M},\tilde{D}}$.

This completes the proof of Proposition \ref{th(4.1)}.
\end{proof}

The following lemma is a basic property of the matrix  $\tilde{M}\in\mathfrak{M}_6$.

\begin{lemma}\label{th(4.3)}
If $\tilde{M}\in\mathfrak{M}_6$, then for any $\ell\in \mathbb{N}$, there exist $a_\ell,b_\ell,c_\ell, d_\ell\in\mathbb{Z}$ such that $$\tilde{M}^{*\ell}=\begin{bmatrix}
3a_\ell+p_1^\ell&3c_\ell+p_1^{\ell-1}p_3\\
3b_\ell&3d_\ell
\end{bmatrix},$$
where $p_1,p_3\in\{1,2\}$.
\end{lemma}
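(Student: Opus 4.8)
The plan is to argue by induction on $\ell$, in complete parallel with the proof of Lemma \ref{th(3.1)}. Writing $\tilde{M}\in\mathfrak{M}_6$ out explicitly from \eqref{3.1} and \eqref{3.2} gives
\[
\tilde{M}=\begin{bmatrix}3a+p_1 & 3b\\ 3^{s}c+p_3 & 3d\end{bmatrix},
\qquad
\tilde{M}^{*}=\begin{bmatrix}3a+p_1 & 3^{s}c+p_3\\ 3b & 3d\end{bmatrix},
\]
where $s\ge 1$, $a,b,d\in\mathbb{Z}$, $c\in(\mathbb{Z}\setminus3\mathbb{Z})\cup\{0\}$ and $p_1,p_3\in\{1,2\}$. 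For the base case $\ell=1$ I would set $a_1=a$, $b_1=b$, $d_1=d$ and $c_1=3^{s-1}c\in\mathbb{Z}$ (this is the only place where $s\ge 1$ is used); then $\tilde{M}^{*}$ already has the asserted shape, since $3^{s}c+p_3=3c_1+p_1^{0}p_3$ and the bottom row is $(3b,3d)$.

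For the inductive step I would assume the formula for $\ell=k$ and compute $\tilde{M}^{*(k+1)}=\tilde{M}^{*k}\tilde{M}^{*}$ entrywise, tracking three structural features that I claim are preserved: (a) the bottom row is $\equiv(0,0)\pmod 3$; (b) the $(1,1)$-entry is $\equiv p_1^{\,\ell}\pmod 3$; and (c) the $(1,2)$-entry is $\equiv p_1^{\,\ell-1}p_3\pmod 3$. Feature (a) is immediate because the bottom row of $\tilde{M}^{*}$ is $(3b,3d)$, so every entry of the bottom row of the product is a multiple of $3$, yielding the integers $b_{k+1},d_{k+1}$. Feature (b) holds because, modulo $3$, the $(1,1)$-entry of the product equals $(3a_k+p_1^{k})(3a+p_1)\equiv p_1^{k}\cdot p_1=p_1^{k+1}$, the cross term $(3c_k+p_1^{k-1}p_3)(3b)$ being a multiple of $3$; the leftover $3$-divisible part defines $a_{k+1}\in\mathbb{Z}$ (here $p_1^{\,k-1}$ is a genuine integer since $p_1\in\{1,2\}$ and $k\ge 1$).

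The one point that requires care — and the only thing I would call a genuine obstacle — is keeping the exponent $\ell-1$ (rather than $\ell$) correct in the $(1,2)$-entry, i.e. verifying feature (c). This is forced precisely because, modulo $3$, the $(1,2)$-entry of $\tilde{M}^{*k}\tilde{M}^{*}$ is governed by $(3a_k+p_1^{k})(3^{s}c+p_3)\equiv p_1^{k}p_3=p_1^{(k+1)-1}p_3\pmod 3$, while the competing contribution $(3c_k+p_1^{k-1}p_3)(3d)$ is a multiple of $3$ thanks to feature (a) for $\tilde{M}^{*}$. Collecting the $3$-divisible remainders then defines $c_{k+1}\in\mathbb{Z}$, and the induction closes. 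Hence the formula holds for all $\ell\in\mathbb{N}$, which proves Lemma \ref{th(4.3)}.
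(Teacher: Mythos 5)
Your proof is correct and is essentially the argument the paper intends: the paper omits the proof of Lemma \ref{th(4.3)}, saying only that it is similar to the induction in Lemma \ref{th(3.1)}, and your base case ($c_1=3^{s-1}c$, using $s\ge 1$) together with the mod-$3$ bookkeeping of the three entries in $\tilde{M}^{*k}\tilde{M}^{*}$ is exactly that induction carried out. The only cosmetic remark is that your justification of feature (a) tacitly uses the factorization $\tilde{M}^{*(k+1)}=\tilde{M}^{*}\tilde{M}^{*k}$ while (b) and (c) use $\tilde{M}^{*k}\tilde{M}^{*}$; since powers of a matrix commute, both are legitimate and nothing is lost.
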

\begin{proof}
The proof is similar to that of Lemma \ref{th(3.1)}, so we omit it here.
\end{proof}

Based on Lemma \ref{th(4.3)}, we can further characterize the spectral property of  $\mu_{\tilde M,\tilde D}$ for $\tilde{M}\in\mathfrak{M}_6$.

\begin{pro}\label{th(4.4)}
If $\tilde{M}\in\mathfrak{M}_6$ and $2 \sigma-\omega \notin3\mathbb{Z}$ in $\tilde D$, then $\mu_{\tilde{M},\tilde{D}}$ is  a non-spectral measure.
\end{pro}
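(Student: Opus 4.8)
The plan is to prove something strictly stronger than non-spectrality, namely that $L^2(\mu_{\tilde M,\tilde D})$ contains \emph{only finitely many} mutually orthogonal exponentials, and then to invoke Theorem \ref{th(Li)}. Since $\mathcal{Z}_{\tilde D}^2\subset\mathbb{Q}^2$ is finite by Remark \ref{re(A)} and $0\in\tilde D$, the finiteness criterion reduces the whole statement to verifying
$$\tilde{M}^{*j}\mathcal{Z}_{\tilde D}^2\cap\mathbb{Z}^2=\emptyset\qquad\text{for every }j\in\mathbb{N}.$$
This uniform emptiness is exactly where $\mathfrak{M}_6$ behaves more favorably than $\mathfrak{M}_4$ in Proposition \ref{th(4.1)}: here the top-left entry of $\tilde M^*$ is $3a+p_1\not\equiv0\pmod3$, so the arithmetic obstruction persists for \emph{all} powers, whereas for $\mathfrak{M}_4$ it vanishes at $j=2$ and forces the finer convolution argument.

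First I would record the shape of the powers. Writing $\tilde M=M'+M_6$ gives $\tilde M^*=\begin{bmatrix}3a+p_1 & 3^sc+p_3\\ 3b & 3d\end{bmatrix}$, and Lemma \ref{th(4.3)} supplies, for each $j\in\mathbb{N}$, integers $a_j,b_j,c_j,d_j$ with
$$\tilde M^{*j}=\begin{bmatrix}3a_j+p_1^j & 3c_j+p_1^{j-1}p_3\\ 3b_j & 3d_j\end{bmatrix},\qquad p_1,p_3\in\{1,2\}.$$
The decisive feature is that both entries of the top row stay coprime to $3$: modulo $3$ they equal $p_1^j$ and $p_1^{j-1}p_3$, both nonzero, while the whole bottom row lies in $3\mathbb{Z}$.

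Next I would feed in the location of the zeros. Because $2\sigma-\omega\notin3\mathbb{Z}$, Proposition \ref{pro(A)}(ii) gives $\mathcal{Z}(m_{\tilde D})\subset\mathcal{G}$, so every $\xi\in\mathcal{Z}_{\tilde D}^2\subset\mathcal{Z}(m_{\tilde D})$ has the form $\xi=\left(\frac{\ell_1}{3\gamma},\frac{\ell_2}{3^{\eta+1}\gamma}\right)^t$ with $\ell_1,\ell_2\in\mathbb{Z}\setminus3\mathbb{Z}$. The plan is to show that the first coordinate of $\tilde M^{*j}\xi$ is never an integer. Clearing denominators, $3^{\eta+1}\gamma$ times that coordinate equals $(3a_j+p_1^j)3^\eta\ell_1+(3c_j+p_1^{j-1}p_3)\ell_2$; reducing modulo $3$ and using $\eta\geq1$ kills the first summand, leaving $p_1^{j-1}p_3\ell_2\not\equiv0\pmod3$, whereas $3^{\eta+1}\gamma\cdot(\text{integer})\equiv0$. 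This contradiction shows $\tilde M^{*j}\xi\notin\mathbb{Z}^2$, hence the displayed emptiness holds for all $j$.

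Finally, Theorem \ref{th(Li)} yields at most finitely many mutually orthogonal exponentials in $L^2(\mu_{\tilde M,\tilde D})$; since this measure has infinite support, $L^2(\mu_{\tilde M,\tilde D})$ is infinite-dimensional, so any spectrum would have to be infinite, and therefore $\mu_{\tilde M,\tilde D}$ cannot be spectral. I expect no serious obstacle beyond bookkeeping: the one point demanding care is the modular reduction of the first coordinate, where both $\eta\geq1$ (so that $3^\eta\ell_1\equiv0$) and the persistence of $p_1^{j-1}p_3\not\equiv0$ supplied by Lemma \ref{th(4.3)} are essential. As a consistency check, this is precisely the step that would fail for $\mathfrak{M}_4$, which is why Proposition \ref{th(4.1)} needed the convolution decomposition instead.
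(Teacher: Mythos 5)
Your proposal is correct and follows essentially the same route as the paper's own proof: the same reduction via Lemma \ref{th(4.3)} and Proposition \ref{pro(A)}(ii), the same clearing of denominators and mod-$3$ contradiction in the first coordinate (the paper also displays the second coordinate, but the contradiction comes from the first), and the same conclusion through Theorem \ref{th(Li)}. The only difference is cosmetic: you make explicit the final step that finitely many orthogonal exponentials rules out spectrality, which the paper leaves implicit.
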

\begin{proof}
For any $\tilde{M}\in\mathfrak{M}_6$, we first prove that $\tilde{M}^{*j}\mathcal{Z}_{\tilde D}^2\cap \mathbb{Z}^2=\emptyset$ for $j\in \mathbb{N}$. Suppose, on the contrary, that $\tilde{M}^{*n}\mathcal{Z}_{\tilde D}^2\cap \mathbb{Z}^2\neq\emptyset$ for some $n\in \mathbb{N}$. By $2 \sigma-\omega \notin3\mathbb{Z}$ and Proposition \ref{pro(A)}, we have
$$\mathcal{Z}_{\tilde D}^2\subset \mathcal{G}\cap [0, 1)^2=\left\{
\begin{pmatrix}
  \frac{\ell_{1}}{3\gamma} \\ \frac{\ell_{2}}{3^{\eta+1}\gamma}
  \end{pmatrix}
  :1\leq \ell_{1}\leq 3\gamma-1, 1\leq\ell_{2}\leq 3^{\eta+1}\gamma-1, \ell_{1},\ell_{2}\in\mathbb{Z}\setminus3\mathbb{Z}
  \right\}.$$
Thus there exist $(\frac{\ell_{1}}{3\gamma}, \frac{\ell_{2}}{3^{\eta+1}\gamma})^t\in\mathcal{G}\cap [0, 1)^2$ and $(s_1,s_2)^t\in\mathbb{Z}^2$ such that
\begin{equation}\label{4.8}
\tilde{M}^{*n}\begin{pmatrix}
  \frac{\ell_{1}}{3\gamma} \\ \frac{\ell_{2}}{3^{\eta+1}\gamma}
  \end{pmatrix}=\begin{pmatrix}
  s_{1} \\ s_{2}
  \end{pmatrix}.
\end{equation}
According to Lemma \ref{th(4.3)},  there exist  $a_n,b_n,c_n,d_n\in\mathbb{Z}$ such that $$\tilde{M}^{*n}=\begin{bmatrix}
3a_n+p_1^n&3c_n+p_1^{n-1}p_3\\
3b_n&3d_n
\end{bmatrix},$$
where $p_1,p_3\in\{1,2\}$.
Now multiplying both sides of  \eqref{4.8} by $3^{\eta+1}\gamma$, one may get that
\begin{equation*}
\begin{pmatrix}
 \ell_{2} p_1^{n-1}p_3 \\ d_n\ell_{2}
  \end{pmatrix}=\begin{pmatrix}
  3(3^{\eta}(\gamma s_{1}-a_n\ell_{1})-3^{\eta-1}\ell_{1}p_1^{n}+c_n\ell_{2}) \\ 3^{\eta}(\gamma s_{2}-b_n\ell_{1})
  \end{pmatrix}\in3\mathbb{Z}^2.
\end{equation*}
This is impossible since $\ell_{2}\notin3\mathbb{Z}$ and $p_1,p_3\in\{1,2\}$. Thus $\tilde{M}^{*j}\mathcal{Z}_{\tilde D}^2\cap \mathbb{Z}^2=\emptyset$ for all $j\in \mathbb{N}$. By Theorem \ref{th(Li)},
$\mu_{\tilde{M},\tilde{D}}$ is  a non-spectral measure.
\end{proof}

In Proposition \ref{th(4.4)}, if $\eta=0$ in $\tilde{D}$, it is interesting that for different matrices $\tilde{M}_1,\tilde{M}_2\in\mathfrak{M}_6$, the spectrality of  $\mu_{\tilde{M}_1,\tilde{D}}$ and $\mu_{\tilde{M}_2,\tilde{D}}$ may be different.  The following example is devoted to display the fact.

\begin{ex} \label{th(4.5)}                                                 {\rm Let                                                            \begin{equation*}
\tilde{M}_1=\begin{bmatrix}
 4&0 \\
 1&3
\end{bmatrix}, \quad
\tilde{M}_2=\begin{bmatrix}
   4&0 \\
   2&3
  \end{bmatrix} \quad {\rm and} \quad
\tilde{D}=\left\{\begin{pmatrix}
0\\0\end{pmatrix},\begin{pmatrix}
1\\ 0
\end{pmatrix},
\begin{pmatrix}
0\\ 1
\end{pmatrix}\right\}.
\end{equation*}
Then  $\mu_{\tilde{M}_1,\tilde{D}}$ is a spectral measure, while $\mu_{\tilde{M}_2,\tilde{D}}$ is a non-spectral  measure.}
\end{ex}
\begin{proof}
By \eqref{3.2}, one can easily see that $\tilde{M}_1,\tilde{M}_2\in\mathfrak{M}_6$.
Let
\begin{equation*}
S=\left\{\begin{pmatrix}
0\\0\end{pmatrix},\begin{pmatrix}
2\\ 2
\end{pmatrix},
\begin{pmatrix}
3\\ 1
\end{pmatrix}\right\}
\quad {\rm and} \quad
H=\frac{1}{\sqrt{3}}\left[  e^{2\pi i\langle \tilde{M}_1^{-1}d,
s\rangle}\right]_{d\in \tilde{D}, s\in S}.
\end{equation*}
By a simple calculation, we obtain $H^*H=I$, which implies that
$(\tilde{M}_1,\tilde{D})$ is admissible. Then with Theorem \ref{th(1.2)}, $\mu_{\tilde{M}_1,\tilde{D}}$ is a spectral measure.

Next we prove that $\mu_{\tilde{M}_2,\tilde{D}}$ is a non-spectral  measure. Let
$A=B=I$, it is evident that $A\in GL_2(3)$ and $AB=I\;({\rm{ mod} } \
M_2(3\mathbb{Z}))$.  Then we have
$(A\tilde{M}_2B)^*(1,-1)^t=(2,-3)^t\notin 3\mathbb{Z}^2$. Together with
$\det(\tilde{D})=1\notin 3\mathbb{Z}$ and Theorem  \ref{th(Liu)}, it means that $\mu_{\tilde{M}_2,\tilde{D}}$ is a non-spectral  measure.
\end{proof}

Recall that $\mathfrak{M}_k$ is defined by \eqref{3.2}.
To simplify the notation, in what follows we denote
$$
\mathfrak{R}_1=\left\{\tilde{M}: \tilde{M}\in\mathfrak{M}_k,\ k\in\{4,6,8,10\}\right\},$$
$$\mathfrak{R}_2=\left\{\tilde{M}: \tilde{M}\in\mathfrak{M}_k,\ k\in\{1,2,3,5,7,9\}, \ s<\eta \right\},$$
$$\mathfrak{R}_3=\left\{\tilde{M}: \tilde{M}\in\mathfrak{M}_k,\ k\in\{1,2,3,5,7,9\}, \ s\geq\eta \right\}.$$
To complete the proof of Theorem \ref{th(2.4)} in the case of $2\sigma-\omega \notin3\mathbb{Z}$, we need the following proposition, which indicates the non-spectrality of the measure $\mu_{\tilde M,\tilde D}$ for $\tilde{M}\in\mathfrak{R}_1\cup\mathfrak{R}_2$.
\begin{pro}\label{th(4.6)}
If $\tilde{M}\in\mathfrak{R}_1\cup\mathfrak{R}_2$ and $2 \sigma-\omega \notin3\mathbb{Z}$ in $\tilde D$, then $\mu_{\tilde{M},\tilde{D}}$ is  a non-spectral measure.
\end{pro}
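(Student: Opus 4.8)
The plan is to show that for every matrix $\tilde M\in\mathfrak{R}_1\cup\mathfrak{R}_2$, the measure $\mu_{\tilde M,\tilde D}$ admits only finitely many mutually orthogonal exponentials (or, in the delicate cases, fails completeness via a convolution argument), and then to conclude non-spectrality. The natural dichotomy is by which of the two lemmas governing $\tilde M^{*\ell}$ applies. For $k\in\{6\}$ we have already established non-spectrality in Proposition \ref{th(4.4)}, and for $k=4$ in Proposition \ref{th(4.1)}, so the genuinely new work concerns $k\in\{8,10\}$ (the rest of $\mathfrak{R}_1$) and the low-power range $s<\eta$ collected in $\mathfrak{R}_2$. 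My strategy is to reduce each of these to one of the already-handled cases by a similarity transformation $Q\tilde MQ^{-1}$, using Lemma \ref{th(2.1)} to transport the spectral property, exactly as was done inside the proof of Lemma \ref{th(3.2)} (where $\mathfrak{M}_7$ was folded into $\mathfrak{M}_2$) and in Case 3 of Theorem \ref{th(3.3)}.

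Concretely, for $\tilde M\in\mathfrak{R}_1$ I would treat $k\in\{8,10\}$ by choosing an integer unipotent $Q=\bigl[\begin{smallmatrix}1&\tau\\0&1\end{smallmatrix}\bigr]$ (or the transpose analogue) with $\tau\in\{1,2\}$ selected so that the off-diagonal residues combine to land the transformed matrix in $\mathfrak{M}_4$ or $\mathfrak{M}_6$; since $p_1p_4-p_2p_3\in3\mathbb{Z}$ constrains the residues, such a $\tau$ always exists, and the transformed digit set $Q\tilde D$ retains the defining shape of $\tilde D$ with $2\sigma-\omega\notin3\mathbb{Z}$ preserved (using $\eta\ge1$, as in Proposition \ref{pro(CL)}). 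Then Proposition \ref{th(4.1)} or Proposition \ref{th(4.4)} applies to the transformed pair, and Lemma \ref{th(2.1)} pulls non-spectrality back to $\mu_{\tilde M,\tilde D}$. For $\tilde M\in\mathfrak{R}_2$, where $s<\eta$, I would apply the diagonal conjugation $Q_s=\bigl[\begin{smallmatrix}1&0\\0&3^{-s}\end{smallmatrix}\bigr]$ from \eqref{3.6}; as in Case 3 of Theorem \ref{th(3.3)} this moves the factor $3^s$ out of the lower-left entry while changing the second digit coordinate from $3^\eta\vartheta$ to $3^{\eta-s}\vartheta$ with $\eta-s\ge1$, so $Q_s\tilde M Q_s^{-1}$ falls into $\mathfrak{M}_6$ (or $\mathfrak{M}_4$ after a further unipotent conjugation) and the transformed digit set again has the form of $\tilde D$. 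Non-spectrality then follows from Proposition \ref{th(4.4)} and Lemma \ref{th(2.1)}.

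The main obstacle I anticipate is bookkeeping rather than conceptual: one must verify in each subcase that (i) the chosen $Q$ is integer-invertible and genuinely maps the residue class $M_k$ into the target class, (ii) the conjugation does not disturb the expanding property or the divisibility $s\ge\eta$ hypotheses baked into the earlier propositions, and (iii) the transformed digit set $Q\tilde D$ still satisfies $2\sigma'-\omega'\notin3\mathbb{Z}$ so that Proposition \ref{pro(A)}(ii) remains available. The subtle point is the interplay between $s$ and $\eta$: for $\mathfrak{R}_2$ the strict inequality $s<\eta$ is precisely what guarantees $\eta-s\ge1$ after conjugating by $Q_s$, so that $\mathcal{Z}_{Q_s\tilde D}^2\subset\mathcal{G}$ still has the nontrivial $3^{\eta-s}$ in the denominator needed to run the congruence obstruction $\tilde M^{*j}\mathcal{Z}^2\cap\mathbb{Z}^2=\emptyset$; I would isolate this as the one place where the hypothesis $s<\eta$ (as opposed to $s\ge\eta$, which belongs to $\mathfrak{R}_3$ and is handled separately) is essential.
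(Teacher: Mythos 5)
Your proposal reproduces the paper's proof essentially step for step: non-spectrality for the base classes $\mathfrak{M}_4$ and $\mathfrak{M}_6$ from Propositions \ref{th(4.1)} and \ref{th(4.4)}, a unipotent conjugation (the paper takes $\tilde Q=\bigl[\begin{smallmatrix}1&-\tau\\ 0&1\end{smallmatrix}\bigr]$ with $\tau p_3+p_4\in3\mathbb{Z}$, and the determinant condition then forces $\tau a'+b'\in3\mathbb{Z}$) to fold $\mathfrak{M}_8\cup\mathfrak{M}_{10}$ into $\mathfrak{M}_4\cup\mathfrak{M}_6$ while $2\sigma-\omega'\notin3\mathbb{Z}$ survives because $\eta\geq1$, and the diagonal conjugation $Q_s$ to push $\mathfrak{R}_2$ down, with Lemma \ref{th(2.1)} transporting non-spectrality back in each reduction. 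The only inaccuracy is cosmetic: the $Q_s$-conjugate of an $\mathfrak{R}_2$ matrix need not land in $\mathfrak{M}_4\cup\mathfrak{M}_6$ directly (it can land in $\mathfrak{M}_8$, since only $a'd'\in3\mathbb{Z}$ is guaranteed), which is exactly why the paper reduces $\mathfrak{R}_2$ to all of the already-settled $\mathfrak{R}_1$ rather than to the two base classes; your framework covers this case anyway via the unipotent step.
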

\begin{proof}
We first prove that $\mu_{\tilde M,\tilde D}$ is a non-spectral measure for $\tilde{M}\in\mathfrak{R}_1$.

$(a):$ $\tilde{M}\in\mathfrak{M}_4\cup\mathfrak{M}_6$.
Since $2 \sigma-\omega \notin3\mathbb{Z}$, it follows from Propositions \ref{th(4.1)} and \ref{th(4.4)}  that $\mu_{\tilde{M},\tilde{D}}$ is a non-spectral measure.

$(b):$ $\tilde{M}\in\mathfrak{M}_8\cup\mathfrak{M}_{10}$.
By \eqref{3.2}, one can write $\tilde{M}\in\mathfrak{M}_8\cup\mathfrak{M}_{10}$  as
 $$ \tilde{M}=\begin{bmatrix}
a'&b'\\
3^{s}c+p_3&3d+p_4
\end{bmatrix},$$
where $s\geq1$, $a',b',d\in \mathbb{Z}$, $c\in(\mathbb{Z}\setminus3\mathbb{Z})\cup\{0\}$, $p_3,p_4\in\{1,2\}$ and $a' p_4-b' p_3\in3\mathbb{Z}$.
 Then we can choose  $\tau\in\{1,2\}$ such that $ \tau p_3+p_4\in3\mathbb{Z}$. Combining this with $a' p_4-b' p_3\in3\mathbb{Z}$, we obtain that
\begin{equation}\label{4.9}
(\tau a'+b')p_3=(\tau p_3+p_4)a'-(a' p_4-b' p_3)\in3\mathbb{Z}.
\end{equation}
Notice that $p_3\in\{1,2\}$, so \eqref{4.9} implies  $\tau a'+b'\in3\mathbb{Z}$.
Let $\tilde{Q}=\begin{bmatrix}
1&-\tau\\
0&1
\end{bmatrix}$. Then
\begin{equation*}
\bar{M}_1
=\tilde{Q}\tilde{M}\tilde{Q}^{-1} =\begin{bmatrix}
3^{s}c\tau+a'-\tau p_3 &-3^sc\tau^2-3d\tau+\tau a'+b'-\tau(\tau p_3+p_4)\\
3^{s}c+p_3&3^{s}c\tau +3d+\tau p_3+p_4
\end{bmatrix},
\end{equation*}
and
\begin{equation*}
\bar{D}_1=\tilde{Q}\tilde{D}=\left\{\begin{pmatrix}
0\\0\end{pmatrix},\begin{pmatrix}
\sigma\\ 0
\end{pmatrix},
\begin{pmatrix}
\omega-3^{\eta}\tau\vartheta\\ 3^{\eta}\vartheta
\end{pmatrix}\right\}.
\end{equation*}
As $\tau_2,p_3\in\{1,2\}$ and $ \tau a'+b', \tau_2 p_3+p_4\in3\mathbb{Z}$, it can be easily seen that $\bar{M}_1\in\mathfrak{M}_4$ if $a'-\tau p_3\in3\mathbb{Z}$, and $\bar{M}_1\in\mathfrak{M}_6$ if $a'-\tau p_3\notin3\mathbb{Z}$.  Moreover, it follows from $2 \sigma-\omega \notin3\mathbb{Z}$ and $\eta\geq1$ that $2 \sigma-(\omega-3^{\eta}\tau\vartheta) \notin3\mathbb{Z}$. Thus $\bar{D}_1$ has the similar properties as $\tilde{D}$. Hence Proposition \ref{th(4.1)} or Proposition \ref{th(4.4)}  still holds for $\mu_{\bar{M}_1,\bar{D}_1}$, which implies that $\mu_{\bar{M}_1,\bar{D}_1}$ is a non-spectral measure. Therefore, $\mu_{\tilde{M},\tilde{D}}$ is a non-spectral measure
by Lemma \ref{th(2.1)}.

According to $(a)$ and $(b)$, $\mu_{\tilde M,\tilde D}$ is a non-spectral measure for any $\tilde{M}\in\mathfrak{R}_1$.

Now  we prove that $\mu_{\tilde M,\tilde D}$ is a non-spectral measure for $\tilde{M}\in\mathfrak{R}_2$. It follows from \eqref{3.2} that $\tilde{M}\in\mathfrak{R}_2$
can be expressed as
 $$ \tilde{M}=\begin{bmatrix}
a'&b'\\
3^{s}c&d'
\end{bmatrix},$$
where $1\leq s<\eta$, $a',b',d'\in \mathbb{Z}$, $c\in\mathbb{Z}\setminus3\mathbb{Z}$ and $a'd'\in3\mathbb{Z}$.
By \eqref{3.6}, we obtain that
\begin{equation*}
\bar{M}_2
=Q_s\tilde{M}Q_s^{-1}
=\begin{bmatrix}
a'&3^{s}b'\\
c&d'
\end{bmatrix}\quad {\rm and} \quad \bar{D}_2=Q_s\tilde{D}=\left\{\begin{pmatrix}
0\\0\end{pmatrix},\begin{pmatrix}
\sigma\\ 0
\end{pmatrix},
\begin{pmatrix}
\omega\\ 3^{\eta-s}\vartheta
\end{pmatrix}\right\}.
\end{equation*}
In view of $a'd'\in3\mathbb{Z}$ and $\eta-s\geq 1$, using \eqref{3.2}, one can easily see that $\bar{M}_2\in \mathfrak{R}_1$, and $\bar{D}_2$ has the similar form as $\tilde{D}$. It has been proved that $\mu_{\tilde{M},\tilde{D}}$ is a  non-spectral measure for $\tilde{M}\in \mathfrak{R}_1$, which implies that $\mu_{\bar{M}_2,\bar{D}_2}$ is a  non-spectral measure.
Then by Lemma \ref{th(2.1)}, $\mu_{\tilde{M},\tilde{D}}$ is a  non-spectral measure.

The proof of Proposition \ref{th(4.6)} is now completed.
\end{proof}

We have all ingredients for the proof of Theorem \ref{th(2.4)} in the case of  $2 \sigma-\omega \notin3\mathbb{Z}$. That is to prove
the following theorem:

\begin{thm} \label{th(4.7)}
Let $\tilde D$ and $\tilde M$ be defined by \eqref{2.6} and \eqref{2.7}, respectively. If $2\sigma-\omega \notin3\mathbb{Z}$ in $\tilde D$, then $\mu_{\tilde M,\tilde D}$ is a spectral measure if and only if there exists a matrix  $Q\in M_2(\mathbb{Z})$  such that  $(\bar{M},\bar{D})$ is admissible, where $\bar{M}=Q\tilde{M}Q^{-1}$ and $\bar{D}=Q\tilde{D}$.
\end{thm}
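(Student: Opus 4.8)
The plan is to dispatch the sufficiency at once and to reduce the necessity to the already-settled regime of Theorem~\ref{th(Liu)} by a single diagonal scaling. \emph{Sufficiency.} If $Q$ is such that $(\bar M,\bar D)$ is admissible, with $\bar M=Q\tilde M Q^{-1}$ and $\bar D=Q\tilde D$, then $\mu_{\bar M,\bar D}$ is spectral by Theorem~\ref{th(1.2)}, and Lemma~\ref{th(2.1)} carries spectrality back along the similarity to $\mu_{\tilde M,\tilde D}$. Nothing beyond these two cited facts is needed here.

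\emph{Necessity.} Assume $\mu_{\tilde M,\tilde D}$ is spectral. Writing $\tilde M$ in the normal form \eqref{3.1}, every $\tilde M$ lies in exactly one $\mathfrak M_k$, and the families $\mathfrak R_1,\mathfrak R_2,\mathfrak R_3$ exhaust all possibilities: $k\in\{4,6,8,10\}$ for $\mathfrak R_1$, and $k\in\{1,2,3,5,7,9\}$ with $s<\eta$, respectively $s\ge\eta$, for $\mathfrak R_2$ and $\mathfrak R_3$. Proposition~\ref{th(4.6)} shows $\mu_{\tilde M,\tilde D}$ is non-spectral whenever $\tilde M\in\mathfrak R_1\cup\mathfrak R_2$, so spectrality forces $\tilde M\in\mathfrak R_3$; in particular the lower-left entry of $\tilde M$ is $3^{s}c$ with $s\ge\eta$.

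Now I would take $Q:=Q_\eta$ from \eqref{3.6}. Because that entry is $3^{s}c$ with $s\ge\eta$, the conjugate $\bar M=Q_\eta\tilde M Q_\eta^{-1}$ has lower-left entry $3^{s-\eta}c\in\mathbb Z$ and upper-right entry multiplied by $3^{\eta}$, so $\bar M$ is again an expanding integer matrix; meanwhile $\bar D=Q_\eta\tilde D=\{(0,0)^t,(\sigma,0)^t,(\omega,\vartheta)^t\}$, the factor $3^{\eta}$ of the last digit having been absorbed. The key point is that the determinant $\sigma\cdot\vartheta-0\cdot\omega=\sigma\vartheta$ of the two nonzero digits of $\bar D$ is not in $3\mathbb Z$, since $3\nmid\sigma$ and $3\nmid\vartheta$; hence $\bar D$ meets the hypothesis of Theorem~\ref{th(Liu)}. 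Lemma~\ref{th(2.1)} then gives that $\mu_{\bar M,\bar D}$ is spectral, whereupon Theorem~\ref{th(Liu)} yields that $(\bar M,\bar D)$ is admissible. This produces the required $Q$, completing the necessity, and it mirrors the role played by $Q_1,Q_s$ in the proof of Theorem~\ref{th(3.4)}.

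The substantive obstacle lies beneath this argument rather than inside it: it is the non-spectrality of $\mathfrak R_1\cup\mathfrak R_2$ packaged in Proposition~\ref{th(4.6)}, together with the bookkeeping showing that $\mathfrak R_3$ is precisely the border at which the $3^{\eta}$-scaling simultaneously preserves the integrality of $\bar M$ and pushes the digit determinant out of $3\mathbb Z$. The delicate point I would verify most carefully is that no analogous reduction to Theorem~\ref{th(Liu)} is available for $\mathfrak R_1$ (where $3\nmid$ the lower-left entry, so $Q_\eta$ leaves the integers) or for $\mathfrak R_2$ (where $s<\eta$, so $Q_\eta\tilde D$ still carries a positive power of $3$ and remains in the $\det\in3\mathbb Z$ regime); this is exactly why Proposition~\ref{th(4.6)} is indispensable on those families rather than a scaling argument.
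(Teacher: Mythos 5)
Your proposal is correct and follows essentially the same route as the paper: sufficiency via Theorem~\ref{th(1.2)} and Lemma~\ref{th(2.1)}, and necessity by invoking Proposition~\ref{th(4.6)} to force $\tilde M\in\mathfrak{R}_3$, then conjugating by $Q_\eta$ so that the scaled digit set $\{(0,0)^t,(\sigma,0)^t,(\omega,\vartheta)^t\}$ has digit determinant $\sigma\vartheta\notin 3\mathbb{Z}$, which lets Lemma~\ref{th(2.1)} and Theorem~\ref{th(Liu)} deliver admissibility of $(\bar M,\bar D)$. This is precisely the paper's argument, including the choice $Q=Q_\eta$ and the role of $\sigma\vartheta\notin3\mathbb{Z}$.
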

\begin{proof}
We first prove the sufficiency. Suppose that $(\bar{M},\bar{D})$ is admissible, where $\bar{M}=Q\tilde{M}Q^{-1}$ and $\bar{D}=Q\tilde{D}$, by Theorem \ref{th(1.2)}, $\mu_{\bar{M},\bar{D}}$ is a spectral measure. This together with Lemma \ref{th(2.1)} shows that $\mu_{\tilde M,\tilde D}$ is  a spectral measure. Hence the sufficiency follows.

Now we are devoted to proving the necessity. Suppose that $\mu_{\tilde M,\tilde D}$ is a spectral measure, then Proposition \ref{th(4.6)} implies $\tilde M \in\mathfrak{R}_3$.  According to the definition of $\mathfrak{R}_3$ and \eqref{3.2}, we can write the matrix $\tilde{M}\in\mathfrak{R}_3$  as
 $$ \tilde{M}=\begin{bmatrix}
a'&b'\\
3^{s}c&d'
\end{bmatrix},$$
where $s\geq \eta$, $a',b',d'\in \mathbb{Z}$, $c\in(\mathbb{Z}\setminus3\mathbb{Z})\cup\{0\}$ and $a'd'\in3\mathbb{Z}$.
Applying \eqref{3.6}, we get
\begin{equation*}
\bar{M}_3
=Q_\eta\tilde{M}Q_\eta^{-1}
=\begin{bmatrix}
a'&3^{\eta}b'\\
3^{s-\eta}c&d'
\end{bmatrix}\quad {\rm and} \quad \bar{D}_3=Q_\eta\tilde{D}=\left\{\begin{pmatrix}
0\\0\end{pmatrix},\begin{pmatrix}
\sigma\\ 0
\end{pmatrix},
\begin{pmatrix}
\omega\\ \vartheta
\end{pmatrix}\right\}.
\end{equation*}
Since
$\mu_{\tilde M,\tilde D}$ is a spectral measure, it follows from Lemma \ref{th(2.1)} that $\mu_{\bar{M}_3,\bar{D}_3}$ is also a spectral measure.
Together with $\sigma\vartheta\notin 3\mathbb{Z}$ and Theorem \ref{th(Liu)}, it gives that $(\bar{M}_3,\bar{D}_3)$ is admissible.

This completes the proof of Theorem \ref{th(4.7)}.
\end{proof}

\begin{re}\label{th(4.8)}
{\rm
Under the assumption of  $2\sigma-\omega \notin3\mathbb{Z}$ in $\tilde D$, from the proof of Theorem \ref{th(2.4)} and Theorem \ref{th(Liu)},  we can easily see that $\mu_{\tilde M,\tilde D}$ is a spectral measure if and only if  $s\geq \eta$ and $(AQ_\eta\tilde{M}Q_\eta^{-1}B)^*(1,-1)^t\in 3\mathbb{Z}^2$, where $B=\begin{bmatrix}
\sigma &\omega\\
0&\vartheta
\end{bmatrix}$ and $A\in GL_2(3)$ satisfies $AB=I\;({\rm{ mod} } \ M_2(3\mathbb{Z}))$. By noting that $\sigma, \vartheta\notin 3\mathbb{Z}$, it is clear that $A=\sigma\vartheta\begin{bmatrix}
\vartheta &-\omega\\
0&\sigma
\end{bmatrix}$ satisfies $AB=(\sigma\vartheta)^2I=I\;({\rm{ mod} } \ M_2(3\mathbb{Z}))$.}
\end{re}

Now we give an example to illustrate the fact in the above remark.

\begin{ex} \label{th(4.9)}
{\rm Let
\begin{equation*}
M_1=\begin{bmatrix}
 8&-5 \\
 4&-1
\end{bmatrix}, \quad
M_2=\begin{bmatrix}
5&-1\\
2&2
  \end{bmatrix} \quad {\rm and} \quad
D=\left\{\begin{pmatrix}
0\\0\end{pmatrix},\begin{pmatrix}
2\\ 1
\end{pmatrix},
\begin{pmatrix}
2\\ 4
\end{pmatrix}\right\}.
\end{equation*}
Then  $\mu_{M_1,D}$ is a spectral measure, while $\mu_{M_2,D}$ is a non-spectral  measure.}
\end{ex}
\begin{proof}
Let $P=\begin{bmatrix}
1 &-1\\
-1&2
\end{bmatrix}$. Then we obtain that
\begin{equation*}
\tilde{M}_1=PM_1P^{-1}=\begin{bmatrix}
 4&0 \\
 3&3
\end{bmatrix}, \quad
\tilde{M}_2=PM_2P^{-1}=\begin{bmatrix}
  3&0 \\
  3&4
  \end{bmatrix},
\end{equation*}
and
\begin{equation*}
\tilde{D}=PD=\left\{\begin{pmatrix}
0\\0\end{pmatrix},\begin{pmatrix}
1\\ 0
\end{pmatrix},
\begin{pmatrix}
-2\\ 6
\end{pmatrix}\right\}.
\end{equation*}
According to \eqref{2.6}, it is easy to see that $\sigma=\eta=1$, $\omega=-2$ and $\vartheta=2$.
Therefore,  $A,B$ and $Q_\eta$ in Remark \ref{th(4.8)} can be written  as
\begin{equation*}
A=\begin{bmatrix}
4 &4\\
0&2
\end{bmatrix},\quad B=\begin{bmatrix}
1 &-2\\
0&2
\end{bmatrix} \quad {\rm and} \quad Q_\eta=\begin{bmatrix}
 1&0 \\
 0&\frac{1}{3}
\end{bmatrix}.
\end{equation*}
By calculations, we get
\begin{equation*}
(AQ_\eta\tilde{M}_1Q_\eta^{-1}B)^*\begin{pmatrix}
1\\-1\end{pmatrix}=\begin{bmatrix}
20&2 \\
-16&8
\end{bmatrix}\begin{pmatrix}
1\\-1\end{pmatrix}=\begin{pmatrix}
18\\-24\end{pmatrix}\in 3\mathbb{Z}^2,
\end{equation*}
and
\begin{equation*}
(AQ_\eta\tilde{M}_2Q_\eta^{-1}B)^*\begin{pmatrix}
1\\-1\end{pmatrix}=\begin{bmatrix}
16&2 \\
0&12
\end{bmatrix}\begin{pmatrix}
1\\-1\end{pmatrix}=\begin{pmatrix}
14\\-12\end{pmatrix}\notin 3\mathbb{Z}^2.
\end{equation*}
Then by using Theorem  \ref{th(Liu)}, one can conclude that $\mu_{Q_\eta\tilde{M}_1Q_\eta^{-1},Q_\eta\tilde{D}}$ is a spectral measure and $\mu_{Q_\eta\tilde{M}_2Q_\eta^{-1},Q_\eta\tilde{D}}$ is a non-spectral  measure. Hence the assertion follows by Lemma \ref{th(2.1)}.
\end{proof}

\begin{re} \label{th(4.10)}
{\rm Form  the proof of Example \ref{th(4.9)}, we know that $(Q_\eta\tilde{M}_1Q_\eta^{-1},Q_\eta\tilde{D})$ is admissible. However, $(\tilde{M}_1,\tilde{D})$ is not admissible since $\mathcal {Z}(m_{\tilde{D}})\subset\mathfrak{G}:=\{(\ell_1/3, \ell_2/18)^t: \ell_1,\ell_2\notin 3\mathbb{Z}\}$ and $\tilde{M}_1^*\mathfrak{G}\cap\mathbb{Z}^2=\emptyset$.
This means that the matrix $Q$  is nontrivial in
Theorem \ref{th(1.4)}.
}
\end{re}

At the end of this paper, we give some remarks and an open problem which related to our main results. In the paper, the integer digit set $D=\{(0,0)^t,(\alpha_1,\alpha_2)^t,(\beta_1,\beta_2)^t\}$ satisfies $\alpha_1\beta_2-\alpha_2\beta_1\neq 0$, it is of interest to consider the following question:

{\bf(Qu 2):} For an expanding matrix $M\in M_2(\mathbb{Z})$  and the digit set $D$ given by \eqref{1.2} with $\alpha_1\beta_2-\alpha_2\beta_1=0$, what is the sufficient and necessary condition for $\mu_{M,D}$ to be a spectral measure?

It is  obvious that $\alpha_1\beta_2-\alpha_2\beta_1=0$ implies the digit set $D$ is collinear, i.e., $D=\{0,a,b\}v$ for some $a,b\in \mathbb{Z}$ and $v\in \mathbb{Z}^2$. At this time,
it is easy to show that $\mathcal{ Z}(m_D)\neq \emptyset$ if and only if $\{a,b\}=\{1,2\}\pmod 3$. If  $\{a,b\}=\{1,2\}\pmod 3$, by using the similar proof as Theorem 1.2 in \cite{Liu-Luo_2017},
we can prove that $\mu_{M,D}$ is a spectral measure if $\det(M)\in3\mathbb{Z}$. However,  we know nothing about the necessary condition. We conjecture that  $\det(M)\in3\mathbb{Z}$  is also the  necessary condition for  $\mu_{M,D}$ to be a spectral measure.

\end{document}